\numberwithin{equation}{section}
\newtheoremstyle{thmstyle}
  {3pt}{3pt}{\itshape}{}{\bfseries}{}{.5em}
  {\thmname{#1}\thmnumber{ #2}\thmnote{ \textmd{(#3)}}}
\theoremstyle{thmstyle}
\newtheorem{teo}{Theorem}[section]
\newtheorem{lemma}[teo]{Lemma}
\newtheorem{coro}[teo]{Corollary}
\newtheorem{prop}[teo]{Proposition}
\newtheorem{De}[teo]{Definition}
\newtheorem{Rem}[teo]{Remark}
\newtheorem{Ex}[teo]{Example}
\newenvironment{de}{\begin{De}\rm}{\end{De}}
\newenvironment{re}{\begin{Rem}\rm}{\end{Rem}}
\title{ Manifolds of mappings associated with real-valued functions spaces}
\author{ author }
\newcommand{\R}{\mathbb{R}}
\newcommand{\Rn}{\mathbb{R}^n}
\newcommand{\Rm}{\mathbb{R}^m}
\newcommand{\F}{\mathcal{F}}
\newcommand{\N}{\mathbb{N}}
\newcommand{\gf}{\Gamma_\mathcal{F}(\eta)}
\begin{document}
\begin{center}
\textbf{\Large Manifolds of mappings associated with real-valued function spaces and natural mappings between them}\\[2mm]
\textbf{Matthieu F. Pinaud}
\end{center}
\begin{abstract}
\noindent
Let $M$ be a compact smooth manifold with corners and $N$ be a finite dimensional
smooth manifold without boundary which admits local addition. 
We define a smooth manifold structure to general sets of continuous 
mapings $\mathcal{F}(M,N)$ whenever functions spaces $\mathcal{F}(U,\R)$ 
on open subsets $U\subseteq [0,\infty)^n$ are given, subject to simple axioms.
Construction and properties of spaces of sections and 
smoothness of natural mappings between spaces $\mathcal{F}(M,N)$ are discussed, 
like superposition operators $\mathcal{F}(M,f):\mathcal{F}(M,N_1)\to \mathcal{F}(M,N_2)$, 
$\eta \mapsto f\circ \eta$ for smooth maps $f:N_1\to N_2$.
\end{abstract}
\textbf{MSC 2020 subject classification}: 58D15 (primary), 46T10, 46T20, 46T05 (secondary)\\
\textbf{Keywords}: manifold of mappings; infinite-dimensional manifold; pushforward; superposition operator; Nemytskij operator 

\section{Introduction}
Following the work of H. Gl\"ockner and L. Tarrega \cite{FGL1}, in this article we describe a general construction principle for smooth manifold structures on sets of mappings between manifolds when real-valued functions spaces are given, satisfying suitable axioms. The modeled space of these manifolds structures, which coincide with the space of sections, are studied at the beginning. Then we study the construction and properties of natural mappings between these manifolds of mappings. 

\noindent
For fixed $m,n \in \N$, we consider a $m$-dimensional compact smooth manifold with corners $M$ and $N$ be a $n$-dimensional smooth manifold without boundary. We consider a basis of the topology $\mathcal{U}$ of the set $[0,\infty)^m$ satisfying suitable properties (see Definition \ref{pre goodcollection}). Suppose that for each open set $U\in \mathcal{U}$, an integral complete locally convex space $\F(U,\R)$ of bounded, continuous real-valued functions are given. Then for each finite-dimensional real vector space $E$, the set of maps $\F(U,E)$ can be defined in a natural way. If certain axioms are satisfied (see Definition \ref{pre familysuitable}), we say that the family $(\F(U,E))_{U\in \mathcal{U}}$ is suitable for global analysis. As direct consequence of the case where $M$ is a smooth manifold without boundary (see \cite{FGL1}), it can be shown that one can define a locally convex space $\F(M,E)$. Moreover, we can also define the set of mappings $\F(M,N)$ of $N$-valued functions on the manifold with corners $M$. 

\noindent
Let us fix notation.
\begin{de}
Let $N$ be a smooth manifold and $\pi_{N}:TN\to N$ its tangent bundle. A local addition is a smooth map ${\Sigma:\Omega\to N}$ defined on a open neightborhood $\Omega \subseteq TN$ of the zero-section ${0_N := \{ 0_p \in T_p N : p\in N\}}$ such that $\Sigma(0_p)=p$ for all $p\in N$ and the image $\Omega^\prime:= \big( \pi_{N},\Sigma\big) (\Omega)$ is open in $N\times N$ and the map $\theta_N:=(\pi_{N},\Sigma):\Omega\to \Omega^\prime$ is a $C^\infty$-diffeomorphism.
\end{de}
\noindent
For each function $\gamma:M\to N$ in $\F(M,N)$, we define the real vector space of sections with the pointwise operations
\[ \gf:=\{ \sigma\in \F(M,TN):\pi_{N}\circ \sigma = \gamma\}\]
and we endow it with a  natural topology making it a integral complete locally convex topological
vector space. We define the set
\[ \mathcal{V}_\gamma := \{ \sigma \in \Gamma_\F (\gamma) : \sigma(M)\subseteq \Omega \}. \]
which is open in $\Gamma_\F (\gamma)$. Setting the set
\[ \mathcal{U}_\gamma := \{ \xi \in \Gamma_\F (\gamma) : (\gamma,\xi)(M) \subseteq \Omega' \}.\]
the map
\[   \Psi_\gamma:= \F(M,\Sigma) : \mathcal{V}_\gamma  \to \mathcal{U}_\gamma,\quad \sigma  \mapsto \Sigma\circ \sigma  \]
is a bijection. We show that (see Theorem \ref{main1}):
\begin{teo}
Let $\mathcal{U}$ be a good collection of open subsets. If $(\F(U,\R))_{U\in \mathcal{U}}$ is a family of locally convex space suitable for global analysis, then for each compact manifold $M$ with corners and smooth manifold~$N$ without boundary which admits local addition, the set $\F(M,N)$ 
admits a smooth manifold structure
such that the sets ${\mathcal U}_\gamma$ are open in $\F(M,N)$
for all $\gamma\in \F(M,N)$
and $\Psi_\gamma\colon {\mathcal V}_\gamma\to {\mathcal U}_\gamma$
is a $C^\infty$-diffeomorphism.
\end{teo}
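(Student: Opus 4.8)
The plan is to take the parametrisations $\Psi_\gamma\colon\mathcal{V}_\gamma\to\mathcal{U}_\gamma$ as inverse charts, so that the charts $\Psi_\gamma^{-1}\colon\mathcal{U}_\gamma\to\mathcal{V}_\gamma\subseteq\Gamma_\F(\gamma)$ are modeled on the integral complete locally convex spaces $\Gamma_\F(\gamma)$, and then to verify the axioms of a smooth manifold. First I would check that these charts cover $\F(M,N)$. For each $\eta\in\F(M,N)$ the zero section $0_\eta\colon x\mapsto 0_{\eta(x)}$ belongs to the modeling space $\gf$, and since $0_N\subseteq\Omega$ it lies in $\mathcal{V}_\eta$; because $\Sigma(0_p)=p$ for all $p$, we get $\Psi_\eta(0_\eta)=\Sigma\circ 0_\eta=\eta$, so $\eta\in\mathcal{U}_\eta$. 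Hence $\{\mathcal{U}_\gamma\}_{\gamma\in\F(M,N)}$ covers $\F(M,N)$, and each $\Psi_\gamma$ is a bijection onto $\mathcal{U}_\gamma$ by the hypotheses recorded before the theorem.

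The heart of the proof is the smoothness of the change-of-chart maps. For $\gamma_1,\gamma_2\in\F(M,N)$ the transition map $\Psi_{\gamma_2}^{-1}\circ\Psi_{\gamma_1}$ is defined on $\{\sigma\in\mathcal{V}_{\gamma_1}:\Sigma\circ\sigma\in\mathcal{U}_{\gamma_2}\}$ and is given pointwise by $\sigma\mapsto\theta_N^{-1}\circ(\gamma_2,\Sigma\circ\sigma)$. To establish that this is a smooth map between open subsets of $\Gamma_\F(\gamma_1)$ and $\Gamma_\F(\gamma_2)$, I would localise. Since $M$ is compact with corners it admits a finite atlas by charts with values in sets $U\in\mathcal{U}$, and on each such piece one may shrink further so that $\gamma_1$ and $\gamma_2$ take values in coordinate domains of $N$ on which $\Sigma$ and $\theta_N^{-1}$ are represented by smooth maps between open subsets of Euclidean spaces. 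In these coordinates the transition map becomes a diagonal insertion followed by a pushforward of the form $\F(U,g)$ with $g$ smooth, and the axioms making $(\F(U,\R))_{U\in\mathcal{U}}$ suitable for global analysis guarantee that such superposition operators are smooth. Reassembling the finitely many local contributions, and using that $\Gamma_\F(\gamma)$ is built from the local pieces $\F(U,E)$ with the corresponding glued locally convex topology, yields smoothness of the full transition map; the same reasoning applied to the inverse shows the overlap domains $\Psi_{\gamma_2}^{-1}(\mathcal{U}_{\gamma_1})$ are open.

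With the atlas in hand I would equip $\F(M,N)$ with the final topology induced by the family $(\Psi_\gamma)_{\gamma}$, declaring $W\subseteq\F(M,N)$ open exactly when $\Psi_\gamma^{-1}(W)$ is open in $\Gamma_\F(\gamma)$ for every $\gamma$. The smoothness (hence continuity in both directions) of the transition maps then makes each $\mathcal{U}_\gamma$ open and each $\Psi_\gamma$ a homeomorphism onto its image. Hausdorffness follows in the usual way: two distinct maps $\eta_1\neq\eta_2$ differ at some $x\in M$, and one separates $\eta_1(x)$ from $\eta_2(x)$ using that $N$ is Hausdorff together with the continuity of point evaluations on the function spaces. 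Since the transition maps are $C^\infty$-diffeomorphisms, the atlas is smoothly compatible and endows $\F(M,N)$ with a smooth manifold structure.

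I expect the smoothness of the pushforward operators to be the principal obstacle, as this is precisely where the abstract axioms must be invoked, and the bookkeeping required to move between the intrinsic description of $\Gamma_\F(\gamma)$ and its local coordinate presentation—tracking the base map $\gamma$, the corner structure of $M$, and the gluing of the spaces $\F(U,E)$—is delicate. Once this is settled, the remaining claims of the theorem are immediate from the construction: by definition of the smooth structure every $\mathcal{U}_\gamma$ is open in $\F(M,N)$ and every $\Psi_\gamma\colon\mathcal{V}_\gamma\to\mathcal{U}_\gamma$ is a $C^\infty$-diffeomorphism.
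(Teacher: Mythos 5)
Your overall strategy is the same as the paper's proof of Theorem \ref{main1}: endow $\F(M,N)$ with the final topology with respect to the maps $\Psi_\gamma$, take $\{(\mathcal{U}_\gamma,\Psi_\gamma^{-1})\}$ as the atlas, and prove smoothness of the transition maps $\sigma\mapsto \theta_N^{-1}\circ(\gamma_2,\Sigma\circ\sigma)$ by localising along a finite atlas of $M$, charts of $N$, and the embedding of $\Gamma_\F(\gamma)$ into a product of spaces $\F(W_i,\R^n)$ (Proposition \ref{topembedding}), thereby reducing to smoothness of local superposition operators. However, there is a genuine gap in your treatment of openness: you derive both the openness of each $\mathcal{U}_\gamma$ and the openness of the overlap domains $\Psi_{\gamma_1}^{-1}(\mathcal{U}_{\gamma_1}\cap\mathcal{U}_{\gamma_2})$ from the ``smoothness (hence continuity in both directions) of the transition maps,'' invoking ``the same reasoning applied to the inverse'' for the domains. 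This is circular: smoothness of a transition map is only meaningful once its domain is known to be open in the model space $\Gamma_\F(\gamma_1)$, so openness cannot be a consequence of that smoothness. The missing ingredient is a comparison with spaces of continuous maps. The paper first notes that the analogous construction on $C(M,N)$ recovers the compact-open topology, so that the inclusion $J:\F(M,N)\to C(M,N)$ is continuous by the universal property of the final topology; since $\mathcal{U}^C_{J(\gamma)}$ is open in $C(M,N)$ (compactness of $M$ enters here), the set $\mathcal{U}_\gamma=J^{-1}\big(\mathcal{U}^C_{J(\gamma)}\big)$ is open, and then the transition domains are open because each $\Psi_{\gamma_1}$ is continuous for the final topology. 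At the level of sections the same role is played by the continuity of the inclusion $\Gamma_\F(\gamma)\to\Gamma_C(\gamma)$ of Remark \ref{Jgamma}. Without this input your atlas is not known to consist of charts defined on open sets.

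A second, smaller imprecision: you assert that ``the axioms making $(\F(U,\R))_{U\in\mathcal{U}}$ suitable for global analysis guarantee that such superposition operators are smooth.'' The axioms (PF), (PB), (GL), (MU) by themselves only yield \emph{continuity} of pushforward-type operators; the smoothness you need of maps of the form $\F(W'/W,H)$, applied to arguments whose only dependence on $\sigma$ is through the continuous linear maps $\sigma\mapsto d\phi_i^\gamma\circ\sigma\circ\varphi_i^{-1}|_{W'}$, is precisely Lemma \ref{F new}. That lemma has a nontrivial proof by induction on the order of differentiability, using difference quotients, weak integrals and the assumed integral completeness of the spaces $\F(W,F)$; it should be cited (or reproved), not attributed directly to the axioms. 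With these two repairs --- the compact-open comparison for openness and the explicit appeal to Lemma \ref{F new} for superposition smoothness --- your outline becomes essentially the paper's argument.
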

\noindent
Using the smooth manifold structures just described, we find:
\begin{prop}
Let $M$ be a $m$-dimensional compact smooth manifold with corners, $N_1$ and $N_2$ be $n$-dimensional smooth manifold which admits local addition $(\Omega_1,\Sigma_1)$ and $(\Omega_2,\Sigma_2)$ respectively. If $f:N_1\to N_2$ is a $smooth$ map, then the map
\[ \F(M,f):\F(M,N_1)\to \F(M,N_2),\quad \gamma \mapsto f\circ \gamma ,\]
is $smooth$.
\end{prop}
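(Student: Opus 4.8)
The plan is to exploit that smoothness of a map between manifolds is a local property, so that it suffices to verify smoothness of $\F(M,f)$ in the charts furnished by Theorem~\ref{main1}. First I would record well-definedness: for $\gamma\in\F(M,N_1)$ the composite $f\circ\gamma$ again lies in $\F(M,N_2)$, because in local charts on $N_1$ and $N_2$ it is obtained from $\gamma$ by post-composition with the smooth chart representative of $f$, and membership in the class $\F$ is preserved under such smooth superposition by the axioms for a family suitable for global analysis (Definition~\ref{pre familysuitable}). Fixing a base point $\gamma\in\F(M,N_1)$, I would use the chart $\Psi_\gamma^{-1}\colon\mathcal{U}_\gamma\to\mathcal{V}_\gamma\subseteq\Gamma_\F(\gamma)$ around $\gamma$ and the chart $\Psi_{f\circ\gamma}^{-1}\colon\mathcal{U}_{f\circ\gamma}\to\mathcal{V}_{f\circ\gamma}\subseteq\Gamma_\F(f\circ\gamma)$ around $f\circ\gamma$, and show that the local representative $\Psi_{f\circ\gamma}^{-1}\circ\F(M,f)\circ\Psi_\gamma$ is smooth on a suitable open neighbourhood of $0\in\Gamma_\F(\gamma)$ (the point corresponding to $\gamma$). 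Since the sets $\mathcal{U}_\gamma$ cover $\F(M,N_1)$ as $\gamma$ varies, this yields smoothness everywhere.

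Tracing the definitions, for $\sigma\in\mathcal{V}_\gamma$ one has $\Psi_\gamma(\sigma)=\Sigma_1\circ\sigma$, hence $\F(M,f)(\Sigma_1\circ\sigma)=f\circ\Sigma_1\circ\sigma$, and applying $\Psi_{f\circ\gamma}^{-1}$ gives the section
\[
x\longmapsto \theta_{N_2}^{-1}\bigl(f(\gamma(x)),\,f(\Sigma_1(\sigma(x)))\bigr).
\]
This exhibits the local representative as a pushforward $\sigma\mapsto F\circ\sigma$ induced by the fibrewise smooth map
\[
F\colon W\to TN_2,\qquad F(v)=\theta_{N_2}^{-1}\bigl(f(\pi_{TN_1}(v)),\,f(\Sigma_1(v))\bigr),
\]
defined on the open set $W=\{v\in\Omega_1:(f(\pi_{TN_1}(v)),f(\Sigma_1(v)))\in\Omega_2'\}\subseteq TN_1$. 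Because $\Sigma_1(0_p)=p$ and $\theta_{N_2}(0_{f(p)})=(f(p),f(p))$, the zero-section is contained in $W$ and $F(0_p)=0_{f(p)}$; moreover $\pi_{TN_2}\circ F=f\circ\pi_{TN_1}$, so $F$ is a smooth bundle map over $f$ sending the zero-section of $TN_1$ into the zero-section of $TN_2$. By the same argument that shows $\mathcal{V}_\gamma$ open, the set $\{\sigma\in\mathcal{V}_\gamma:\sigma(M)\subseteq W\}$ is an open neighbourhood of $0$ on which the representative equals $\sigma\mapsto F\circ\sigma$ and takes values in $\mathcal{V}_{f\circ\gamma}$.

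It then remains to prove that this pushforward $\sigma\mapsto F\circ\sigma$ from $\{\sigma\in\Gamma_\F(\gamma):\sigma(M)\subseteq W\}$ to $\Gamma_\F(f\circ\gamma)$ is smooth. Here I would use compactness of $M$ to choose a finite atlas of $M$ together with charts on $N_1$ around the values of $\gamma$ and charts on $N_2$ around the corresponding values of $f\circ\gamma$; in these local trivializations of $TN_1$ and $TN_2$ the section spaces are modeled, by the very construction behind Theorem~\ref{main1} (as in \cite{FGL1}), on spaces $\F(U_i,\R^n)$ with $U_i\subseteq[0,\infty)^m$ belonging to the good collection (Definition~\ref{pre goodcollection}), and the pushforward becomes, on each piece, a parametrized superposition operator $s\mapsto g_i(\,\cdot\,,s(\,\cdot\,))$ with $g_i$ a smooth local coordinate expression of $F$ (absorbing the chart transitions and the fixed base maps $\gamma$ and $f\circ\gamma$). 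The smoothness of such superposition operators between the model spaces is precisely what the axioms for a suitable family guarantee, and assembling the finitely many local statements yields smoothness of the pushforward, hence of $\F(M,f)$.

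The hard part will be this last step: establishing smoothness of the parametrized superposition operator at the level of the model function spaces $\F(U_i,\R^n)$. Unlike the transition maps appearing in Theorem~\ref{main1}, here $f$ need not be a local diffeomorphism, so $F$ need not be fibrewise invertible; one must therefore invoke the superposition-smoothness axiom for a general smooth fibre map rather than for a chart change. I expect this to be a matter of bookkeeping rather than a genuine obstruction, since the relevant axiom uses only smoothness of the fibre map and not its invertibility, but it is the point at which the hypotheses on $(\F(U,\R))_{U\in\mathcal{U}}$ enter decisively, and care is needed to arrange that all the $U_i$ lie in the good collection and that the compatibility between overlapping trivializations is respected.
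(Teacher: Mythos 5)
Your proposal reproduces the paper's strategy almost step for step: well-definedness, the chart representative $\sigma\mapsto\theta_{N_2}^{-1}\circ\bigl(f\circ\gamma,\,f\circ\Sigma_1\circ\sigma\bigr)$, localization through a finite atlas and the embedding of Proposition \ref{topembedding}, and reduction to superposition operators on the model spaces. But there is a genuine gap at exactly the step you yourself flag as the hard part: you propose to finish by invoking a ``superposition-smoothness axiom,'' and no such axiom exists. All four axioms of Definition \ref{pre familysuitable} --- (PF), (PB), (GL), (MU) --- assert only \emph{continuity} of the respective operators; in particular (PF) makes the pushforward $f_*$ continuous, never smooth. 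The statement you actually need is Lemma \ref{F new}: for a smooth map $f\colon V\to F$ on an open subset $V$ of a finite-dimensional vector space, the operator $\F(U/W,f)\colon\{\gamma\in\F(U,E)\colon\gamma(\overline{W})\subseteq V\}\to\F(W,F)$, $\gamma\mapsto f\circ\gamma|_W$, is smooth. Upgrading the axiomatic continuity to smoothness is not bookkeeping; it is the technical core of the paper (flagged there as new, with a full proof by induction on $C^k$, establishing $d\F(U/W,f)=\F(U/W,df)$ via difference quotients, weak integrals --- this is where integral completeness enters --- and the mean value theorem). As written, your argument would fail at this point, because the axiom you reach for is not available to be invoked.

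A secondary mismatch: your local operator has the parametrized form $s\mapsto g_i(\cdot,s(\cdot))$ with a base-point-dependent fibre map $g_i$, which Lemma \ref{F new} does not cover (its $f$ has no dependence on the base variable, and $W_i\times\R^n$ is not open in $\R^{m}\times\R^n$ since $W_i\subseteq[0,\infty)^m$). The paper avoids this by defining the base-independent smooth map $H_i(x,y,z):=d\phi_{2,i}\circ(\pi_{TN},\Sigma_2)^{-1}\bigl(\phi_{2,i}^{-1}(x),\,f\circ\Sigma_1\circ T\phi_{1,i}^{-1}(y,z)\bigr)$ on an open subset of $\R^{n_2}\times\R^{n_1}\times\R^{n_1}$, applying Lemma \ref{F new} to get smoothness of $h_i=\F(W_{M,i}^\prime/W_{M,i},H_i)$, and then writing the chart representative as $h_i$ precomposed with the continuous affine map $\sigma\mapsto\bigl(\phi_{2,i}\circ f\circ\gamma\circ\varphi_{M,i}^{-1},\,\phi_{1,i}\circ\gamma\circ\varphi_{M,i}^{-1},\,d\phi_{1,i}\circ\sigma\circ\varphi_{M,i}^{-1}\bigr)$; that is, the fixed data $\gamma$ and $f\circ\gamma$ enter as function-valued \emph{arguments} of the superposition operator rather than as parameters of the fibre map. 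With Lemma \ref{F new} cited in place of the nonexistent axiom, and the local operator rewritten in this form, your argument closes and coincides with the paper's proof.
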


\noindent
In particular, for $p\in M$, the point evaluation map $\varepsilon_p:\F(M,N)\to N$ is smooth (see Proposition \ref{pointeval}). For each $v\in T\F(M,N)$, we define the map
\[ \Theta_N(v) : M \to TN,\quad \Theta_N(v)(p) := T\varepsilon_p (v).\]
Then with respect to the tangent bundle of $\F(M,N)$ we have:
\begin{prop}
Let $M$ be a $m$-dimensional compact smooth manifold with corners, $N$ be a $n$-dimensional smooth manifold which admits a local addition and $\pi_{N}:TN\to N$ its tangent bundle. Then the map
\[ \F(M,\pi_{N}):\F(M,TN)\to \F(M,N),\quad \tau \mapsto \pi_{N}\circ \tau\]
is a smooth vector bundle with fiber $\Gamma_{\F}(\gamma)$ over $\gamma\in \F(M,N)$. Moreover, the map
\[ \Theta_N:T\F(M,N)\to F(M,TN), \quad v \mapsto \Theta_N(v) \]
is an isomorphism of vector bundles.
\end{prop}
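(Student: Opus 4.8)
The plan is to realize $\F(M,TN)$ as one of the mapping manifolds produced by Theorem~\ref{main1}, to exhibit $\F(M,\pi_{TN})$ as a vector bundle by trivializing it over the chart domains $\mathcal{U}_\gamma$, and finally to recognize $\Theta_N$ as the identity in those trivializations. First I would record that $TN$ admits a local addition: starting from $\Sigma\colon\Omega\to N$ and the canonical flip $\kappa_N\colon TTN\to TTN$ (which interchanges the two projections $\pi_{TTN}$ and $T\pi_{TN}$), the map $\Sigma_{TN}:=T\Sigma\circ\kappa_N$, defined on $\kappa_N^{-1}(T\Omega)$, is a local addition on $TN$; indeed $\kappa_N(0_w)\in T\Omega$ because $T\pi_{TN}(0_w)=0_{\pi_{TN}(w)}\in\Omega$, and a direct computation gives $\Sigma_{TN}(0_w)=w$ together with the diffeomorphism property of $(\pi_{TTN},\Sigma_{TN})$. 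By Theorem~\ref{main1} applied to $TN$ the set $\F(M,TN)$ is a smooth manifold, and since $\pi_{TN}$ is smooth the map $\F(M,\pi_{TN})$ is smooth by the Proposition on smoothness of superposition operators. By definition its fibre over $\gamma$ equals $\{\tau\in\F(M,TN):\pi_{TN}\circ\tau=\gamma\}=\Gamma_\F(\gamma)$.

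Next I would build local trivializations over the open sets $\mathcal{U}_\gamma$ from the fibre derivative of $\Sigma$. Let $D^{\mathrm v}\Sigma(v,w):=\tfrac{d}{dt}\big|_{0}\Sigma(v+tw)\in T_{\Sigma(v)}N$ for $v\in\Omega$ and $w\in T_{\pi_{TN}(v)}N$; for fixed $v$ the linear map $D^{\mathrm v}\Sigma(v,\cdot)$ is the differential at $v$ of the diffeomorphism obtained by restricting $\Sigma$ to $\Omega\cap T_{\pi_{TN}(v)}N$, hence is invertible. For $\eta\in\mathcal{U}_\gamma$ set $\sigma:=\Psi_\gamma^{-1}(\eta)$ and define
\[
\Xi_\gamma\colon (\F(M,\pi_{TN}))^{-1}(\mathcal{U}_\gamma)\to\mathcal{U}_\gamma\times\Gamma_\F(\gamma),\qquad \tau\mapsto\big(\eta,\ p\mapsto D^{\mathrm v}\Sigma(\sigma(p),\cdot)^{-1}(\tau(p))\big),
\]
with $\eta=\pi_{TN}\circ\tau$. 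Each $\Xi_\gamma$ is a fibre-preserving bijection that is linear on fibres, since $D^{\mathrm v}\Sigma(\sigma(p),\cdot)^{-1}$ is linear and the operations on the section spaces are pointwise; the transition maps between two such trivializations are, fibrewise, the pointwise composition of two invertible vertical derivatives and are therefore fibrewise linear. Together with the smoothness established below this exhibits $\F(M,\pi_{TN})$ as a smooth vector bundle with typical fibre $\Gamma_\F(\gamma)$.

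The main obstacle is the smoothness of $\Xi_\gamma$ and $\Xi_\gamma^{-1}$. In the charts $\Psi_\gamma$ of $\fspace$ and the corresponding $\Sigma_{TN}$-charts of $\F(M,TN)$—which are compatible with the fibre derivative precisely because $\Sigma_{TN}$ is built from $T\Sigma$—their local representatives become superposition operators induced by the mutually inverse smooth fibre maps $(v,w)\mapsto D^{\mathrm v}\Sigma(v,w)$ and $(v,u)\mapsto D^{\mathrm v}\Sigma(v,\cdot)^{-1}(u)$ between suitable pullback bundles over $\Omega$. Their smoothness then follows from the same Omega-lemma type results that underlie smoothness of $\F(M,f)$; the points to check are that these auxiliary fibre-product manifolds again admit local additions (being vector bundles over $N$, which admits one) and that the pairing $(\sigma,h)\mapsto\big(p\mapsto(\sigma(p),h(p))\big)$ into the associated mapping space is smooth, which is where the axioms making the family suitable for global analysis enter.

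Finally I would identify $\Theta_N$. It is fibrewise linear because each $T\varepsilon_p$ is linear (Proposition~\ref{pointeval}), and it covers $\mathrm{id}_{\fspace}$ since $\pi_{TN}\circ\Theta_N(v)=\eta$ for $v\in T_\eta\fspace$. For a curve $t\mapsto\eta_t=\Sigma\circ\sigma_t$ representing $v$, with $\sigma_0=\sigma$ and $\dot\sigma_0=h$, one computes
\[
\Theta_N(v)(p)=\tfrac{d}{dt}\Big|_{0}\eta_t(p)=D^{\mathrm v}\Sigma(\sigma(p),h(p)),
\]
so in the chart-induced trivialization $T\Psi_\gamma$ of $T\fspace$ on the source and the trivialization $\Xi_\gamma$ on the target, $\Theta_N$ becomes the identity of $\mathcal{U}_\gamma\times\Gamma_\F(\gamma)$. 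Being a fibre-preserving, fibrewise linear diffeomorphism that is the identity in compatible local trivializations, $\Theta_N$ is an isomorphism of vector bundles, as required.
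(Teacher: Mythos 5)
Your proposal is correct and takes essentially the approach the paper intends: the paper gives no written proof of this proposition, merely noting in the preceding remark that $\F(M,\pi_{TN})$ is smooth with fibers $\Gamma_{\F}(\gamma)$ and that the result ``follows the same steps'' as \cite[Theorem A.12]{AGS}, and those steps are exactly what you reconstruct --- the local addition $T\Sigma\circ\kappa_N$ on $TN$, vertical-derivative trivializations $\Xi_\gamma$ over the chart domains $\mathcal{U}_\gamma$, smoothness via the Omega-lemma (Lemma \ref{F new}) together with the pushforward machinery, and the identification of $\Theta_N$ with the identity in these trivializations (consistent with the paper's fiberwise result $\Theta_\gamma\colon T_\gamma\F(M,N)\to\Gamma_{\F}(\gamma)$ proved just before). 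The only part you leave at sketch level, the smoothness of $\Xi_\gamma$ and $\Xi_\gamma^{-1}$, is precisely the part the paper delegates wholesale to the cited reference, so your writeup is, if anything, more detailed than the paper's own treatment.
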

\noindent
Let $M$ and $N$ be finite-dimensional smooth manifolds without boundary. For $0<\lambda\leq 1$, we define the set $BC^{0,\lambda}(M,N)$ of all continuous functions $\gamma:M\to N$ such that for charts $\varphi:U\to \varphi(U)$ and $\phi:V\to \phi(V)$ around $p\in M$ and $\gamma(p)\in N$ respectively, such that $\gamma(U)\subseteq V$ and the composition $\phi \circ \gamma\circ \varphi^{-1}:\varphi(U)\to \R^n$ is $\lambda$-H\"older continuous. It is known that $BC^{0,\lambda}(M,N)$ has a smooth manifold structure (see e.g. \cite{Kri}). We will show this fact using the provided construction.

\section{Preliminaries}
\begin{de}\label{pre goodcollection}
Let $m\in \N$ fixed, a set $\mathcal{U}$ of open subsets of product set $[0,\infty)^m$ will be called a \textit{good collection of open subsets} if the following condition are satisfied:
\begin{itemize}
    \item[a)] $\mathcal{U}$ is a basis for the topology of $[0,\infty)^m$.
    \item[b)] If $U\in \mathcal{U}$ and $K\subseteq U$ is a compact non-empty subset, then there exists $V\in \mathcal{U}$ with compact closure $\overline{V}$ in $[0,\infty)^m$ such that $K\subseteq V$ and $\overline{V}\subseteq U$.
    \item[c)] If $U\subseteq [0,\infty)^m$ is an open set and $W\in \mathcal{U}$ is a relatively compact subset of $U$, then there exists $V\in \mathcal{U}$ such that $V$ is a relatively compact subset of $U$ and $\overline{W}\subseteq V$.
    \item[d)] If $\phi:U\to V$ is a $C^\infty$-diffeomorphism between open subsets $U$ and $V$ of $[0,\infty)^m$ and $W \in \mathcal{U}$ is a relatively compact subset of $U$, then $\phi(W)\in \mathcal{U}$.
\end{itemize}
\end{de}

\begin{re}
If we consider $\mathcal{U}=\{ U\cap [0,\infty)^m : U \text{ is open in } \Rm \}$ then $\mathcal{U}$ defines a good collection of open subsets. This is also true for the case of open and bounded subsets of $\Rm$.
\end{re}

\noindent
Let $U$ be a open subset of $[0,\infty)^m$, we write $BC(U,\R)$ for the vector space of all bounded continuous functions $f:U\to \R$ endowed with the supremum norm $\lVert \cdot \lVert_\infty$.

\begin{de}\label{pre mancorners}
Let $M$ be a paracompact Hausdorff topological space. A chart $\phi:U\to V$ is a homeomorphism from an open subset $U\subseteq M$ onto an open subset $V \subseteq [0,\infty)^m$. We say that two charts $\phi_1:U_1\to V_1$ and $\phi_2:U_2\to V_2$ are compatibles if $\phi_1(U_1)\cap \phi_2(U_2)=\emptyset$ or the transition map $\phi_2\circ\phi_1^{-1}: \phi_1\left( U_1\cap U_2\right) \to \phi_2\left( U_1\cap U_2\right)$ is smooth.\\ 
We say that $M$ is an $m$-dimensional smooth manifold with corners if $M$ is equipped with a maximal family of charts $\{ \phi_i:U_i\to V_i\}_{i\in I}$ such that each pair of chart, are compatible and $M=\cup_{i\in I} U_i$.\\
We say that $N$ is a smooth manifold if it is a smooth manifold without boundary.
\end{de}

\noindent
For our context, one important property of smooth manifolds with corners is the existence of cut-off functions. 
\begin{lemma}\label{bumpfunction}
Let $M$ be a $m$-dimensional smooth manifold with corners, $K$ be a closed subset of $M$ and $U$ be a open subset of $M$ containing $K$. Then there exists a smooth function $\xi:M\to [0,1]$ such that $\xi|_K = 1$ and $\text{supp}(\xi)\subseteq U$.
\end{lemma}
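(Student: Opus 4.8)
The plan is to reduce everything to the existence of cut-off functions on the model space $[0,\infty)^m$ and then to transport and assemble them through charts, using the paracompactness already built into Definition \ref{pre mancorners}. The basic building block is the following model-space fact: for every compact set $C$ contained in an open set $V\subseteq[0,\infty)^m$ there is a smooth $\chi\colon[0,\infty)^m\to[0,1]$ with $\chi\equiv 1$ on a neighbourhood of $C$ and with $\text{supp}(\chi)$ compact and contained in $V$. Since, by the convention for manifolds with corners, a map on an open subset of $[0,\infty)^m$ is smooth precisely when it is a restriction of a smooth map on an open subset of $\Rm$, this is obtained simply by restricting a standard bump function on $\Rm$, so the presence of the corner causes no difficulty.

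First I would cover $K$ by charts lying inside $U$. As $M$ is locally compact Hausdorff (being locally modelled on the locally compact space $[0,\infty)^m$), for each $p\in K$ I can choose a chart $\phi_p\colon W_p\to\phi_p(W_p)$ with $p\in W_p\subseteq U$ and $\overline{W_p}$ compact. The family $\{W_p\}_{p\in K}\cup\{M\setminus K\}$ is then an open cover of $M$; by paracompactness it admits a locally finite open refinement $\{A_\alpha\}_{\alpha\in\Lambda}$, and by normality of $M$ (paracompact Hausdorff spaces are normal) the shrinking lemma yields an open cover $\{B_\alpha\}$ with $\overline{B_\alpha}\subseteq A_\alpha$. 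Let $\Lambda_K$ collect those $\alpha$ with $A_\alpha\cap K\neq\emptyset$; each such $A_\alpha$ cannot be contained in $M\setminus K$, hence lies in a single chart domain $W_{p(\alpha)}\subseteq U$.

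Next I would produce one smooth bump per index in $\Lambda_K$. For $\alpha\in\Lambda_K$ the closed set $\overline{B_\alpha}$ is compact (it is closed in the compact set $\overline{W_{p(\alpha)}}$), so $\phi_{p(\alpha)}(\overline{B_\alpha})$ is a compact subset of the open set $\phi_{p(\alpha)}(A_\alpha)$. Applying the model-space fact and pulling back along $\phi_{p(\alpha)}$ gives a smooth function on $W_{p(\alpha)}$ equal to $1$ on $B_\alpha$ with compact support inside $A_\alpha$; extending it by $0$ defines a smooth $\rho_\alpha\colon M\to[0,1]$ with $\rho_\alpha\equiv 1$ on $B_\alpha$ and $\text{supp}(\rho_\alpha)\subseteq A_\alpha\subseteq U$. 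Since the supports $\{\text{supp}(\rho_\alpha)\}_{\alpha\in\Lambda_K}$ inherit local finiteness from $\{A_\alpha\}$, the sum $g:=\sum_{\alpha\in\Lambda_K}\rho_\alpha$ is locally a finite sum and hence a smooth function $M\to[0,\infty)$ with $\text{supp}(g)\subseteq U$; moreover every $q\in K$ lies in some $B_\alpha$ with $\alpha\in\Lambda_K$, giving $g(q)\geq\rho_\alpha(q)=1$.

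Finally I would normalise the range: fixing a smooth $\beta\colon\R\to[0,1]$ with $\beta\equiv 0$ on $(-\infty,0]$ and $\beta\equiv 1$ on $[1,\infty)$, the composite $\xi:=\beta\circ g$ is smooth, takes values in $[0,1]$, equals $1$ on $K$ (where $g\geq 1$), and satisfies $\{\xi\neq 0\}\subseteq\{g>0\}$, so $\text{supp}(\xi)\subseteq\text{supp}(g)\subseteq U$, as required. I expect the only genuine points needing care to be the two places where the corner structure intervenes: verifying that the model-space bump is smooth up to the boundary of $[0,\infty)^m$ (handled by the restriction convention), and checking that each chart-wise bump extends smoothly by zero across $\partial M$, which is guaranteed because $\text{supp}(\rho_\alpha)$ is a compact, hence closed, subset of its chart domain. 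The local finiteness supplied by paracompactness is precisely what makes the infinite sum $g$ legitimately smooth.
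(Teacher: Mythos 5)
Your proof is correct. A point of comparison first: the paper states Lemma \ref{bumpfunction} \emph{without any proof}, treating the existence of cut-off functions on manifolds with corners as a known fact, so there is no in-paper argument to measure yours against; what you have written supplies the missing details, and does so by the standard route. Each step checks out: the model-space bump is obtained by restricting a bump on $\mathbb{R}^m$ to $[0,\infty)^m$ (legitimate under the usual convention that smoothness on open subsets of $[0,\infty)^m$ means local extendability, and the restricted support stays compact); every refinement member $A_\alpha$ meeting $K$ must be contained in one of the chart domains $W_{p(\alpha)}\subseteq U$ because it cannot sit inside $M\setminus K$; the set $\overline{B_\alpha}$ is compact since it is closed in $M$ and contained in the compact set $\overline{W_{p(\alpha)}}$; extension of each chart-wise bump by zero is smooth because its support is compact, hence closed in $M$, so $M$ is covered by two open sets on which the extension is visibly smooth; and since supports inherit local finiteness from $\{A_\alpha\}$, the sum $g$ is locally finite hence smooth, with $\mathrm{supp}(g)$ contained in the (closed) union $\bigcup_{\alpha\in\Lambda_K}\mathrm{supp}(\rho_\alpha)\subseteq U$. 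Your normalization $\xi=\beta\circ g$ is a nice touch: it forces the range into $[0,1]$ and the value $1$ on $K$ without dividing by a global sum as in a genuine partition-of-unity argument, and it preserves the support condition because $\beta$ vanishes on $(-\infty,0]$. The only ingredients you invoke without proof (paracompact Hausdorff implies normal, the shrinking lemma for locally finite covers, local compactness of $M$) are standard general topology and are all available under the paper's Definition \ref{pre mancorners}, which builds paracompactness and the Hausdorff property into the notion of manifold with corners.
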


\begin{de}\label{pre familysuitable}
Let $\mathcal{U}$ be a good collection of open subsets of $[0,\infty)^m$. For $U\in \mathcal{U}$, the vector subspace $\F(U,\R)$ of $BC(U,\R)$ will denote a integral complete locally convex space such that the inclusion map $\F(U,\R)\to BC(U,\R)$ is continuous. \\
Let $\{b_1,...,b_n\}$ be a basis for a finite dimensional real vector space $E$, we define the space
\[ \F(U,E):=\sum_{i=1}^n \F(U,\R)b_i \]
and we endow it with the the locally convex topology making the map
\begin{equation}\label{2 eq iso}
\F(U,\R)^n\to \F(U,E),\quad (f_1,...,f_n)\mapsto \sum_{i=1}^n f_i b_i 
\end{equation}
an isomorphism of topological vector spaces. \\
\noindent
We say that $\left( \F(U,\R)\right)_{U\in \mathcal{U}} $ is a family of locally convex spaces suitable for global analysis if the following axioms are satisfied for all finite-dimensional real vector spaces $E$ and $F$:
\begin{itemize}
\item[(PF)] \textbf{Pushforward Axiom} For all $U, V\in \mathcal{U}$ such that $V$ is relatively compact in $U$ and each smooth map $f:U\times E \to F$, we have $f_*(\gamma):=f\circ (\text{id}_V,\gamma|_V)\in \F(V,F)$ for all $\gamma\in \F(U,E)$ and the map
\[ f_*:\F(U,E)\to \F(V,F),\quad \gamma\mapsto f\circ (\text{id}_V,\gamma|_V)\]
is continuous.
\item[(PB)]\textbf{Pullback Axiom :} Let $U$ be an open subset of $[0,\infty)^m$ and $V,W\in \mathcal{U}$ such that $W$ has compact closure contained in $U$. Let $\Theta:U\to V$ be a smooth diffeomorphism. Then $\gamma\circ \Theta|_W\in \F(W,E)$ for all $\gamma\in \F(V,E)$ and 
\[ \F(\Theta|_W,E):\F(V,E)\to \F(W,E),\quad \gamma\mapsto \gamma\circ \Theta|_W\]
is continuous.
\item[(GL)]\textbf{Globalization Axiom :} If $U, V\in \mathcal{U}$ with $V\subseteq U$ and $\gamma\in \F(V,E)$ has compact support, then the map $\tilde{\gamma}:U\to E$ defined by
\[ \tilde{\gamma}(x)=\left\{ 
\begin{array}{rl}
\gamma(x), & x\in V \\
0, & x\in U\setminus \text{supp}(\gamma) 
\end{array}
\right.\]
is in $\F(U,E)$ and for each compact subset $K$ of $V$ the map
\[ e_{U,V,K}^E : \F_K (V,E)\to \F(U,E),\quad \gamma \mapsto \tilde{\gamma} \]
is continuous, where $\F_K (V,E):=\{ \gamma\in \F(V,E) : \text{supp}(\gamma)\subseteq K\}$ is endowed with the topology induced by $\F(V,E)$.
\item[(MU)]\textbf{Multiplication Axiom :} If $U\in\mathcal{U}$ and $h\in C_c^{\infty}(U,\R)$, then $h\gamma \in \F(U,E)$ for all $\gamma\in\F(U,E)$ and the map
\[ m_h^E:\F(U,E)\to \F(U,E),\quad \gamma\mapsto h\gamma \]
is continuous.
\end{itemize}
\end{de}

\begin{re}
Since the map in (\ref{2 eq iso}) is an isomorphism of topological vector spaces, the Axioms (PB), (GL) and (MU) hold in general whenever they hold for $E=\R$. Likewise, Axiom (PF) holds in general whenever it holds for $F=\R$.
\end{re}
\noindent
Following \cite[Remark 3.5]{FGL1}, if $\mathcal{U}$ is a good collection of open subsets of $[0,\infty)^m$ and $\left( \F(U,\R)\right)_{U\in \mathcal{U}} $ is a family of locally convex space suitable for global analysis, then we have the following results.

\begin{lemma}\label{AX2}
Let $U\subseteq [0,\infty)^\infty$ be an open subset and $V,W\in \mathcal{U}$ such that $W$ has compact closure contained in $U$ and $\Theta:U\to V$ be a smooth diffeomorphism. If $\F(V,\R)$ and $\F(W,\R)$ are Fr\'echet spaces such that  $\gamma\circ \Theta|_W\in \F(W,\R)$ for all $\gamma\in \F(V,\R)$, then the map
\[ \F(\Theta|_W,\R):\F(V,\R)\to \F(W,\R),\quad \gamma\mapsto \gamma\circ \Theta|_W\]
is continuous.
\end{lemma}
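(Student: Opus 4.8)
The plan is to invoke the Closed Graph Theorem, which becomes available precisely because both $\F(V,\R)$ and $\F(W,\R)$ are assumed to be Fréchet spaces. I first note that $\F(\Theta|_W,\R)$ is linear: for $\gamma_1,\gamma_2\in\F(V,\R)$ and $a,b\in\R$ one has $(a\gamma_1+b\gamma_2)\circ\Theta|_W=a(\gamma_1\circ\Theta|_W)+b(\gamma_2\circ\Theta|_W)$ pointwise on $W$. Since the hypothesis guarantees that the map is well defined, it is a linear map between Fréchet spaces, and it therefore suffices to prove that its graph is closed.

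Because Fréchet spaces are metrizable, I may test closedness of the graph sequentially. So I suppose $\gamma_k\to\gamma$ in $\F(V,\R)$ and $\gamma_k\circ\Theta|_W\to\psi$ in $\F(W,\R)$, and aim to show $\psi=\gamma\circ\Theta|_W$. The key input is the standing hypothesis from Definition \ref{pre familysuitable} that the inclusions $\F(V,\R)\to BC(V,\R)$ and $\F(W,\R)\to BC(W,\R)$ are continuous. Convergence in these sup-norm spaces is uniform and hence pointwise, so $\gamma_k(x)\to\gamma(x)$ for every $x\in V$ and $(\gamma_k\circ\Theta|_W)(y)\to\psi(y)$ for every $y\in W$.

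It remains to identify the limit pointwise. Fix $y\in W$; then $\Theta(y)\in V$, so the first convergence gives $\gamma_k(\Theta(y))\to\gamma(\Theta(y))$, while the second gives $\gamma_k(\Theta(y))\to\psi(y)$. Uniqueness of limits in $\R$ forces $\psi(y)=\gamma(\Theta(y))=(\gamma\circ\Theta|_W)(y)$, and since $y\in W$ was arbitrary, $\psi=\gamma\circ\Theta|_W$. Thus the graph is closed, and the Closed Graph Theorem delivers continuity of $\F(\Theta|_W,\R)$.

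I expect no serious obstacle here: the entire argument is a clean application of the Closed Graph Theorem, and the Fréchet hypothesis is exactly what licenses it. The only conceptual point worth flagging is that completeness—used via metrizability in the sequential argument—is indispensable; it is what lets one bypass a direct continuity estimate and replace it by the soft identification of the limit through the continuous embeddings into the $BC$ spaces, where pointwise convergence makes the identification immediate. This is the sense in which the lemma is a convenience result: for Fréchet-type function spaces, mere well-definedness of the pullback already forces its continuity, so one need not verify the quantitative content of Axiom (PB) by hand.
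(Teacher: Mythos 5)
Your proof is correct and follows essentially the same route as the paper: both arguments hinge on the Closed Graph Theorem for linear maps between Fr\'echet spaces, using the continuity of the inclusions $\F(V,\R)\to BC(V,\R)$ and $\F(W,\R)\to BC(W,\R)$ to identify the limit. The paper packages this abstractly (the graph of $\F(\Theta|_W,\R)$ is the preimage under $J\times J$ of the closed graph of the bounded pullback operator on the $BC$ spaces), whereas you unwind it sequentially with pointwise evaluation; the mathematical content is the same.
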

\begin{proof}
Let $\gamma\in BC(V,\R)$ and $p:\R\to\R$ be a continuous seminorm, then
\[ \lVert \gamma\circ \Theta|_W \lVert_{\infty,p}:=\sup_{x\in W} p(\gamma\circ \Theta|_W(x)) \leq \sup_{z\in V} p(\gamma(z)).\]
Therefore $\gamma\circ \Theta|_W \in BC(W,\R)$. We define the continuous linear operator
\[ T:BC(\Theta|_W,\R):BC(V,\R)\to BC(W,\R),\quad \gamma\mapsto \gamma\circ \Theta|_W\]
with $\lVert T\lVert_{op}\leq 1$. Hence, its graph $\text{graph}(T)$ is closed in $BC(V,\R)\times BC(W,\R)$. Since the inclusion map $J:\F(U,\R)\to BC(U,\R)$ is continuous, we have
\[ \text{graph}(\F(\Theta|_W,\R))=(J\times J)^{-1}(\text{graph}(T)).\]
Then $\F(\Theta|_W,\R)$ is continuous by the Closed Graph Theorem.
\end{proof}

\begin{lemma}\label{AX3}
If $U\in \mathcal{U}$, $h\in C_c^{\infty}(U,\R)$ and $\F(U,\R)$ is a Fr\'echet space such that $h\gamma \in \F(U,\R)$ for all $\gamma\in\F(U,\R)$, then the map
\[ m_h:\F(U,\R)\to \F(U,\R),\quad \gamma\mapsto h\gamma \]
is continuous.
\end{lemma}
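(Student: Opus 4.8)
The plan is to mimic the proof of Lemma~\ref{AX2}, replacing composition with pointwise multiplication and applying the Closed Graph Theorem for Fr\'echet spaces.

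First I would observe that since $h\in C_c^\infty(U,\R)$ is in particular bounded, with $c:=\|h\|_\infty<\infty$, multiplication by $h$ defines a bounded linear operator
\[ S:BC(U,\R)\to BC(U,\R),\quad \gamma\mapsto h\gamma, \]
satisfying $\|S\|_{op}\le c$, because $\|h\gamma\|_\infty\le c\,\|\gamma\|_\infty$ for every $\gamma\in BC(U,\R)$. In particular $S$ is continuous, so its graph $\text{graph}(S)$ is closed in $BC(U,\R)\times BC(U,\R)$.

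Next I would relate the graph of $m_h$ to that of $S$. Writing $J:\F(U,\R)\to BC(U,\R)$ for the inclusion map, which is continuous by Definition~\ref{pre familysuitable} and injective since $\F(U,\R)$ is a subspace of $BC(U,\R)$, the identity $J(h\gamma)=h\,J(\gamma)$ together with the injectivity of $J$ yields
\[ \text{graph}(m_h)=(J\times J)^{-1}\big(\text{graph}(S)\big). \]
Indeed, a pair $(\gamma,\delta)\in \F(U,\R)\times\F(U,\R)$ lies in the right-hand side precisely when $J\delta=h\,J\gamma=J(h\gamma)$, and by injectivity this is equivalent to $\delta=h\gamma$. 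Since $J\times J$ is continuous and $\text{graph}(S)$ is closed, it follows that $\text{graph}(m_h)$ is closed in $\F(U,\R)\times\F(U,\R)$.

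Finally, because $\F(U,\R)$ is a Fr\'echet space by hypothesis, both the domain and the codomain of $m_h$ are Fr\'echet, so the Closed Graph Theorem applies and yields the continuity of $m_h$. The Fr\'echet hypothesis is exactly what the Closed Graph Theorem requires, and the proof is otherwise a routine adaptation of Lemma~\ref{AX2}; the only step demanding a moment's care is the identification of $\text{graph}(m_h)$ with the preimage $(J\times J)^{-1}(\text{graph}(S))$, which rests on the inclusion $J$ being both multiplicative with respect to $h$ and injective, so that the relation $J\delta=h\,J\gamma$ forces $\delta=h\gamma$ already inside $\F(U,\R)$.
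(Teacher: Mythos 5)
Your proposal is correct and takes essentially the same approach as the paper: the paper's proof (a sketch referring back to Lemma~\ref{AX2}) likewise uses the boundedness of the multiplication operator $M_h$ on $BC(U,\R)$, identifies $\mathrm{graph}(m_h)$ as the preimage $(J\times J)^{-1}(\mathrm{graph}(M_h))$ to conclude it is closed, and then applies the Closed Graph Theorem for Fr\'echet spaces. Your write-up simply makes explicit the details (the operator norm bound $\lVert h\gamma\rVert_\infty\le \lVert h\rVert_\infty\lVert\gamma\rVert_\infty$ and the role of injectivity of $J$) that the paper leaves implicit.
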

\begin{proof}
As in the previous lemma, $m_h$ is continuous since the operator 
\[M_h:BC(U,\R)\to BC(U,\R),\quad \gamma\mapsto h\gamma\] 
is continuous linear, the graph of $m_h$ is closed and therefore, $m_h$ is continuous.
\end{proof}

\begin{lemma}\label{AX4}
Let $U, V\in \mathcal{U}$ with $V\subseteq U$ and $K$ be a compact subset of $V$. Assume that, for each $\gamma\in \F(V,\R)$ with support in $K$, the map $\tilde{\gamma}:U\to \R$ defined by
\[ \tilde{\gamma}(x)=\left\{ 
\begin{array}{rl}
\gamma(x), & x\in V \\
0, & x\in U\setminus \text{supp}(\gamma) 
\end{array}
\right.\]
is in $\F(U,\R)$. If, moreover, if $\F_K(V,\R)$ is a Fr\'echet space then the map
\[ e_{U,V,K} : \F_K (V,\R)\to \F(U,\R),\quad \gamma \mapsto \tilde{\gamma} \]
is continuous
\end{lemma}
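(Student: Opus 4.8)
The plan is to follow verbatim the template of Lemmas \ref{AX2} and \ref{AX3}: realize $e_{U,V,K}$ as the restriction of a continuous linear operator between Banach spaces, deduce that its graph is closed, and then invoke the Closed Graph Theorem using that $\F_K(V,\R)$ is Fr\'echet. First I would set up the extension operator at the level of bounded continuous functions. Write $BC_K(V,\R):=\{\gamma\in BC(V,\R):\text{supp}(\gamma)\subseteq K\}$; this is a closed, hence Banach, subspace of $BC(V,\R)$, the support condition being preserved under uniform limits. For $\gamma\in BC_K(V,\R)$ the extension $\tilde\gamma$ is well defined on all of $U$: the open sets $V$ and $U\setminus\text{supp}(\gamma)$ (the latter open because $\text{supp}(\gamma)\subseteq K$ is compact, hence closed in $U$) cover $U$ since $\text{supp}(\gamma)\subseteq V$, and the two defining formulas agree on the overlap $V\setminus\text{supp}(\gamma)$, where $\gamma$ vanishes; thus $\tilde\gamma$ is continuous. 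Since clearly $\lVert\tilde\gamma\rVert_\infty=\lVert\gamma\rVert_\infty$, the extension map
\[ E:BC_K(V,\R)\to BC(U,\R),\quad \gamma\mapsto\tilde\gamma \]
is a linear isometry; in particular it is continuous and its graph is closed in $BC_K(V,\R)\times BC(U,\R)$.

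Next I would transfer closedness to the $\F$-level exactly as in Lemma \ref{AX2}. The inclusion $J_U:\F(U,\R)\to BC(U,\R)$ is continuous by hypothesis, and since $\F_K(V,\R)$ carries the topology induced from $\F(V,\R)$, the inclusion $J_V:\F_K(V,\R)\to BC_K(V,\R)$ is continuous as well, its image landing in $BC_K(V,\R)$ by the support condition. One then has
\[ \text{graph}(e_{U,V,K})=(J_V\times J_U)^{-1}\big(\text{graph}(E)\big), \]
so $\text{graph}(e_{U,V,K})$ is closed in $\F_K(V,\R)\times\F(U,\R)$ as the preimage of a closed set under a continuous map. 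Concretely, this is the statement that whenever $\gamma_i\to\gamma$ in $\F_K(V,\R)$ and $\tilde\gamma_i\to\delta$ in $\F(U,\R)$, passage to the Banach topologies gives $\tilde\gamma_i=E(\gamma_i)\to E(\gamma)=\tilde\gamma$ and $\tilde\gamma_i\to\delta$ in $BC(U,\R)$, whence $\delta=\tilde\gamma$ by injectivity of $J_U$.

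Finally I would apply the Closed Graph Theorem, with the Fr\'echet space $\F_K(V,\R)$ as domain, to conclude that $e_{U,V,K}$ is continuous. I expect the main obstacle to lie precisely here: unlike Lemmas \ref{AX2} and \ref{AX3}, where both spaces are assumed Fr\'echet, the codomain $\F(U,\R)$ is only assumed integral complete locally convex. One therefore needs a version of the Closed Graph Theorem valid for a Fr\'echet domain and this class of codomains; the useful structural fact is that the image of $e_{U,V,K}$ lies in $\F_K(U,\R)$ and that the whole situation sits, via the continuous injections $J_V,J_U$, inside the Banach spaces $BC_K(V,\R)$ and $BC(U,\R)$, which is what makes the closed-graph reduction effective. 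Pinning down that the ambient framework (in particular the notion of integral completeness) supplies a Closed Graph Theorem covering this domain/codomain pair is the delicate step; the gluing and isometry computations above are routine once the support condition is used.
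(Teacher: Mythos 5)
Your proposal follows the paper's proof essentially verbatim: the paper's (one-sentence) argument is precisely to observe that the zero-extension map $BC_K(V,\R)\to BC(U,\R)$ is a linear isometry and then, ``likewise to the previous lemmas,'' to pull its closed graph back along the continuous inclusions and invoke the Closed Graph Theorem, exactly as in Lemmas \ref{AX2} and \ref{AX3}. Your construction of $E$, the identity $\mathrm{graph}(e_{U,V,K})=(J_V\times J_U)^{-1}\big(\mathrm{graph}(E)\big)$, and the appeal to the Fr\'echet hypothesis on the domain spell out what the paper leaves implicit.

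The ``delicate step'' you flag at the end is a real issue, but it is a defect of the lemma as stated rather than of your argument relative to the paper's: the classical Closed Graph Theorem needs the codomain $\F(U,\R)$ to be Fr\'echet (or at least webbed, if one uses de Wilde's theorem with an ultrabornological domain), and integral completeness alone does not provide this. The paper's proof silently ignores the point; in its source (\cite{FGL1}, Remark 3.5) the standing assumption is that the relevant $\F$-spaces are Fr\'echet, and that is evidently how this lemma is meant to be read, consistently with Lemmas \ref{AX2} and \ref{AX3} where both spaces are assumed Fr\'echet. So your proof is, if anything, more careful than the paper's own; completing it only requires strengthening the hypothesis to include $\F(U,\R)$ Fr\'echet (or webbed), which is what the paper implicitly does.
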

\begin{proof}
Likewise to the previous lemmas, if $BC_K(V,\R):=\{\gamma\in BC(V,\R): supp(\gamma)\subseteq K\}$ then the map
\[ BC (V,\R)\to BC_K(U,\R),\quad \gamma \mapsto \tilde{\gamma} \]
which extends functions by $0$ is a linear isometry.
\end{proof}

\begin{re}
Since a manifold with corners admits cut-off functions, we can extend the basic consequence of these axioms for the case $\Rm$ (see \cite[Section 4]{FGL1}) to our context with corners. Moreover, the proofs are exactly the same. However, the statement of Lemma \ref{F new} is new and we provide a full proof.
\end{re}

\begin{lemma}\label{pre F Acts}
Let $E$ and $F$ be finite-dimensional real vector spaces and $U, W\in \mathcal{U}$ such that $W$ is relatively compact in $U$. If $\Phi:E\to F$ is a smooth map, then $\Phi\circ \gamma|_W \in \F(W,F)$ holds for each $\gamma\in \F(U,E)$ and the map
    \[ \F(U,E)\to\F(W,F),\quad \gamma\mapsto \Phi\circ \gamma|_W\]
    is continuous. In particular, if $E=F$ and $\Phi=\text{Id}_E$, then the restriction map
    \[ \F(U,E)\to\F(W,E),\quad \gamma\mapsto \gamma|_W\]
is continuous.
\end{lemma}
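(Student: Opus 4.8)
The plan is to realize the assignment $\gamma \mapsto \Phi \circ \gamma|_W$ as a special case of the pushforward map furnished by the Pushforward Axiom (PF), whose domain of applicability matches exactly the hypothesis that $W$ is relatively compact in $U$. The one point to negotiate is that $\Phi$ depends only on the value in $E$, whereas (PF) is phrased for maps $f \colon U \times E \to F$ that may also depend on the base point; this is resolved by absorbing $\Phi$ into such an $f$ whose base-point dependence is trivial.

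Concretely, first I would define $f \colon U \times E \to F$ by $f := \Phi \circ \mathrm{pr}_E$, where $\mathrm{pr}_E \colon U \times E \to E$ is the projection onto the second factor, so that $f(x,v) = \Phi(v)$ for all $(x,v) \in U \times E$. Since $\mathrm{pr}_E$ is smooth and $\Phi$ is smooth by hypothesis, $f$ is smooth as a composition of smooth maps; here smoothness on the manifold with corners $U \times E$ is understood in the appropriate sense, which poses no difficulty because the corner variables enter $f$ only through the smooth projection $\mathrm{pr}_E$.

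Next I would apply (PF) with $V := W$, which is legitimate precisely because $U, W \in \mathcal{U}$ and $W$ is relatively compact in $U$. For any $\gamma \in \F(U,E)$ and any $x \in W$ one computes
\[ f_*(\gamma)(x) = f\bigl(x, \gamma(x)\bigr) = \Phi\bigl(\gamma(x)\bigr) = (\Phi \circ \gamma|_W)(x), \]
so that $f_*(\gamma) = \Phi \circ \gamma|_W$. Thus (PF) yields simultaneously that $\Phi \circ \gamma|_W \in \F(W,F)$ for every $\gamma \in \F(U,E)$ and that the induced map $\F(U,E) \to \F(W,F)$, $\gamma \mapsto \Phi \circ \gamma|_W$, is continuous, which is the assertion of the lemma. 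The final ``in particular'' statement then follows at once by taking $E = F$ and $\Phi = \mathrm{Id}_E$, for which $\Phi \circ \gamma|_W = \gamma|_W$.

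I do not anticipate a genuine obstacle: the content is a bookkeeping reduction to (PF), and the only step deserving a moment's care is confirming the smoothness of $f$ on $U \times E$, which is immediate from the factorization $f = \Phi \circ \mathrm{pr}_E$. This is also consistent with the preceding remark that the basic consequences of the axioms in the boundaryless case carry over to the setting with corners by the same arguments.
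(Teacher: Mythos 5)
Your proof is correct and is essentially the argument the paper intends: the paper omits the proof, remarking that it carries over verbatim from \cite[Section 4]{FGL1}, where this lemma is established exactly by applying the pushforward axiom (PF) to the base-point-independent map $f(x,v)=\Phi(v)$ with $V=W$. Your handling of the smoothness of $f=\Phi\circ\mathrm{pr}_E$ on $U\times E$ (trivial extension across the corners) is the only point needing care, and you address it correctly.
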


\begin{lemma}\label{F new}
Let $E$ and $F$ be finite-dimensional real
vector spaces and $U,W\in \mathcal{U}$ such that $W$ is relatively
compact in~$U$.
If $V$ is an open subset of~$E$ and
\[
f\colon V\to F
\]
is a smooth map, then the map
\[
\F(U/W,f)\colon  \{\gamma\in \F(U,E)\colon \gamma(\overline{W})\subset V\}
\to \F(W,F),\quad \gamma\mapsto f\circ \gamma|_W
\]
is smooth.
\end{lemma}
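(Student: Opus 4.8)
The plan is to first verify that the domain $\mathcal{O}:=\{\gamma\in\F(U,E):\gamma(\overline W)\subset V\}$ is open in $\F(U,E)$, then to reduce the statement to the case of a map defined on all of $E$ by a cut-off argument, and finally to invoke the corners-analogue of the smoothness of pushforward operators. For openness I would use that the inclusion $\F(U,E)\to BC(U,E)$ is continuous and that $\overline W$ is compact: given $\gamma_0\in\mathcal{O}$, the image $\gamma_0(\overline W)$ is a compact subset of the open set $V\subseteq E$, so some $\varepsilon$-neighbourhood of it (for a norm on $E$) lies in $V$; every $\gamma$ with $\sup_{x\in\overline W}\lVert\gamma(x)-\gamma_0(x)\rVert<\varepsilon$ then belongs to $\mathcal{O}$. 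As this is an open condition in the supremum norm and the inclusion into $BC(U,E)$ is continuous, $\mathcal{O}$ is open.

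Because smoothness is a local property, I would fix $\gamma_0\in\mathcal{O}$ and localize. Writing $L:=\gamma_0(\overline W)$, a compact subset of $V$, I would choose an open $V_0$ with $L\subseteq V_0$ and $\overline{V_0}\subseteq V$ compact, and a smooth bump function $\chi\colon E\to[0,1]$ (available since $E\cong\R^{\dim E}$) with $\chi\equiv1$ on a neighbourhood of $L$ and $\operatorname{supp}\chi\subseteq V_0$. Setting $g:=\chi f$ on $V$ and $g:=0$ off $\operatorname{supp}\chi$ yields a smooth map $g\colon E\to F$ defined on all of $E$. On the supremum-norm neighbourhood of $\gamma_0$ on which $\gamma(\overline W)$ stays inside the region where $\chi\equiv1$, the maps $\gamma\mapsto f\circ\gamma|_W$ and $\gamma\mapsto g\circ\gamma|_W$ coincide; hence it suffices to prove smoothness of $g_*\colon\F(U,E)\to\F(W,F)$, $\gamma\mapsto g\circ\gamma|_W$, for $g$ defined and smooth on all of $E$.

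For this globally defined map, well-definedness and continuity are already provided by Lemma \ref{pre F Acts}, so only smoothness remains, and here I would appeal to the basic consequences of the axioms transferred from \cite[Section 4]{FGL1}; the Remark preceding this lemma justifies the transfer, since cut-off functions exist on manifolds with corners and the proofs are unchanged. Concretely, I expect to argue by induction that $g_*$ is $C^k$ for every $k$, with directional derivative $d(g_*)(\gamma,\eta)=(g^{\langle1\rangle})_*(\gamma,\eta)$, where $g^{\langle1\rangle}\colon E\times E\to F$, $(y,v)\mapsto Dg(y)v$, is again smooth, so that the derivative has the same pushforward form and the inductive hypothesis applies. The identification of $d(g_*)$ with a pushforward uses the first difference-quotient map $g^{[1]}(y,v,t):=\int_0^1 Dg(y+stv)v\,ds$, which is smooth and satisfies $g(y+tv)-g(y)=t\,g^{[1]}(y,v,t)$ and $g^{[1]}(y,v,0)=Dg(y)v$; the integral makes sense in $F$ and, after pushforward, in $\F(W,F)$ precisely because the spaces are integral complete.

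The step I expect to be the main obstacle is showing that the difference quotients $\tfrac1t\big(g_*(\gamma+t\eta)-g_*(\gamma)\big)=(g^{[1]}(\cdot,\cdot,t))_*(\gamma,\eta)$ converge as $t\to0$ in the topology of $\F(W,F)$ — not merely pointwise or in $BC(W,F)$ — and that the resulting derivative depends continuously on $(\gamma,\eta)$. This amounts to a parameter-dependent strengthening of Lemma \ref{pre F Acts}, in which the scalar $t$ is carried along as an additional argument of the smooth map $g^{[1]}$; controlling the limit uniformly over the relevant sets is exactly where the Pushforward Axiom (PF) together with integral completeness are used. Once $C^1$-ness is established in this form, the passage to $C^\infty$ is formal, since $(g^{\langle1\rangle})_*$ is of the same type as $g_*$.
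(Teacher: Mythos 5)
Your reduction follows the paper's own proof step for step: openness of the domain, the cut-off function $\chi$ producing a globally defined smooth $g$ with $f\circ\gamma|_W=g\circ\gamma|_W$ near $\gamma_0$, the induction on $C^k$ with base case Lemma \ref{pre F Acts}, and the derivative formula $d(g_*)(\gamma,\eta)=(dg)_*(\gamma,\eta)$. However, the proof is incomplete at precisely the point you yourself flag as ``the main obstacle'': the convergence of the difference quotients $\tfrac1t\bigl(g_*(\gamma+t\eta)-g_*(\gamma)\bigr)$ in the topology of $\F(W,F)$. You announce that this ``amounts to a parameter-dependent strengthening of Lemma \ref{pre F Acts}'' in which ``(PF) together with integral completeness are used'', but you neither state nor prove any such strengthening; as written, this is a declaration of intent rather than an argument, and it is the only part of the lemma that is not routine. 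The paper closes this step with a concrete device: since $\F(U/W,df)$ is continuous (the case $k=0$), the map $(t,s)\mapsto df\circ(\gamma+st\eta,\eta)|_W$ is continuous into $\F(W,F)$; integral completeness of $\F(W,F)$ then furnishes the weak integral $I(t)=\int_0^1 df\circ(\gamma+st\eta,\eta)|_W\,ds$, which is continuous in $t$ by the theorem on parameter-dependent integrals; the identity $\Delta(t)=I(t)$ is verified by applying the point evaluations $\varepsilon_x$ (continuous linear, hence commuting with weak integrals) together with the mean value theorem; and letting $t\to0$ yields the claim. None of this machinery, or a substitute for it, appears in your write-up.

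A second issue is that the role you assign to integral completeness in your sketched alternative is incorrect: the integral defining $g^{[1]}(y,v,t)=\int_0^1 Dg(y+stv)v\,ds$ takes values in the finite-dimensional space $F$, so no completeness hypothesis on $\F(W,F)$ is involved there. What your route actually requires is joint continuity of $(t,(\gamma,\eta))\mapsto g^{[1]}\circ(\gamma,\eta,c_t)|_W$, where $c_t$ is the constant function with value $t$; to obtain this from (PF) one must first know that constant functions lie in the relevant $\F$-spaces. That is true on relatively compact members of $\mathcal{U}$ (by (PF) itself, as in the paper's proof that $\zeta:N\to\F(M,N)$ is an embedding) but not, a priori, on $U$, so one must interpose $W'\in\mathcal{U}$ with $\overline{W}\subseteq W'$ relatively compact in $U$ (Definition \ref{pre goodcollection} c)), restrict to $W'$, adjoin $c_t$, and apply (PF) for the pair $(W',W)$. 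If carried out, this would in fact be a genuinely different closing step from the paper's, and one that dispenses with integral completeness in this lemma altogether; but you have not carried it out, so the proposal has a real gap at its central analytic step.
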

\begin{proof}
Given $\gamma_0$ in the domain $D$ of $\F(U/W,f)$,
we have that $\gamma_0(\overline{W})$ is a compact subset of~$V$.
There exists a smooth function $\chi\colon V\to\R$
with compact support $K\subseteq V$ such that $\chi(y)=1$
for all $y$ in an open subset $Y\subseteq V$ with $\gamma_0(\overline{W})\subseteq Y$.
Then
\[
g\colon E\to F,\quad g(y):=\left\{
\begin{array}{cl}
\chi(y)f(y) & \mbox{\,if $\,y\in V$;}\\
0 &\mbox{\,if $y\in E\setminus K$}
\end{array}\right.
\]
is a smooth function. Since $f|_Y=g|_Y$,
we have that
\[
f\circ \gamma|_W=g\circ \gamma|_W
\]
for all $\gamma\in D$ such that $\gamma(\overline{W})\subseteq Y$,
which is an open neighborhood of $\gamma_0$ in~$D$.
To see smoothness of $\F(U/W,f)$
on some open neighborhood of $\gamma_0$
(which suffices for the proof),
we may therefore replace $f$ with $g$
and assume henceforth that $V=E$,
whence $D$ is all of $\F(U,E)$.
It suffices to show that $\F(U/W,f)$
is $C^k$ for each $k\in \N_0$,
and we show this by induction.
For the case $k=0$, see Lemma~3.1.11.
Let $k\in \N_0$ now and assume that, for all $E$, $F$, $U$, $W$ and $f\colon V\to F$
as in the lemma, with $V=E$, the map $\F(U/W,f)$ is $C^k$.
We claim that, for all $\gamma,\eta\in \F(U,E)$,
the directional derivative
\[
d\F(U/W,f)(\gamma,\eta)
\]
exists and equals $\F(U/W, df)(\gamma,\eta)$,
if we identify the locally convex spaces
$\F(U,E)\times\F(U,E)$ and $\F(U,E\times E)$;
thus
\begin{equation}\label{the-der-dir}
d\F(U/W,f)(\gamma,\eta)
=\F(U/W, df)(\gamma,\eta).
\end{equation}
If this is true, then
\[
d\F(U/W,f)=\F(U/W,df)
\]
is $C^k$ by induction and thus continuous, showing that
$\F(U/W,f)$ is $C^1$. Moreover, since $\F(U/W,f)$ is $C^1$
and $d\F(U/W,f)=\F(U/W,df)$ is $C^k$,
the map $\F(U/W,f)$ is $C^{k+1}$,
which completes the inductive proof. It only remains to prove the
claim. To this end, let $\gamma,\eta\in \F(U,E)$.
Since $\F(U/W,df)$ is continuous by the case $k=0$,
the map
\[
h\colon [0,1]\times [0,1]
\to\F(W,F),\quad (t,s)\mapsto df\circ (\gamma+st\eta,\eta)|_W=\F(U/W,df)(\gamma+st\eta,\eta)
\]
is continuous. As $\F(W,F)$ is assumed integral complete,
for each $t\in [0,1]$ the continuous path $h(t,\cdot)\colon [0,1]\to \F(W,F)$
has a weak integral
\[
I(t):=\int_0^1 df\circ (\gamma+st\eta,\eta)|_W\, ds
\]
in $\F(W,F)$. The function $I\colon [0,1]\to\F(W,F)$
is continuous by the theorem on parameter-dependent
integrals. For $0\not= t\in [0,1]$,
we consider the difference quotient
\[
\Delta(t)=\frac{\F(U/W,f)(\gamma+t\eta)-\F(U/W,f)(\gamma)}{t}=
\frac{f\circ (\gamma+t\eta)|_W+f\circ\gamma|_W}{t}.
\]
Then
\begin{equation}\label{Delta-I}
\Delta(t)=I(t).
\end{equation}
In fact, for each $x\in W$ the point evaluation
\[
\varepsilon_x\colon \F(W,F)\to F,\quad\theta\mapsto \theta(x)
\]
is a continuous linear map. It therefore commutes with the weak integral
and we obtain
\begin{eqnarray*}
I(t)(x)&=&\varepsilon_x(I(t))=\int_0^1 \varepsilon_x(df\circ (\gamma+st\eta,\eta)|_W)\, ds\\
&=&\int_0^1 df(\gamma(x)+st\eta(x),\eta(x))\, ds
=\frac{f(\gamma(x)+t\eta(x))-f(\gamma(x))}{t}\\
&=& \Delta(t)(x),
\end{eqnarray*}
applying the mean value theorem to the smooth function $f$.
Thus (\ref{Delta-I}) holds.
Exploiting the continuity of~$I$, letting $t\to 0$ we obtain
\[
\lim_{t\to 0}\Delta(t)=\lim_{t\to 0}I(t)=I(0)
=\int_0^1 df\circ (\gamma,\eta)|_W\, ds=df\circ (\gamma,\eta)|_W,
\]
establishing~(\ref{the-der-dir}). 
\end{proof}

\begin{de}
Let $U$ be a open subset of $[0,\infty)^m$ and $E$ be a finite-dimensional real vector space. We let $\F_{\text{loc}}(U,E)$ be the set of all function $\gamma:U\to E$ such that for each $V\in \mathcal{U}$ which is relatively compact in $U$ we have $\gamma|_V\in \F(V,E)$. \\
We see that each $\gamma\in \F_{\text{loc}}(U,E)$ is continuous and by the previous lemma $\F(U,E)\subseteq \F_{\text{loc}}(U,E)$. We endow $\F_{\text{loc}}(U,E)$ with the initial topology with respect to the family of restriction maps
    \[ \F_{\text{loc}}(U,E)\to\F(V,E),\quad \gamma\mapsto \gamma|_V\]
where $V\in \mathcal{U}$ which is relatively compact in $U$. This topology makes $\F_{\text{loc}}(U,E)$ a Hausdorff locally convex space.
\end{de}

\begin{lemma}\label{pre Floc Rest}
Let $E$ be a finite-dimensional vector space. If $U$ and $V$ are open subsets of $[0,\infty)^m$ such that $V\subseteq U$, then $\gamma|_V \in \F_{\text{loc}}(V,E)$ for each $\gamma\in \F_{\text{loc}}(U,E)$ and the restriction map
\[ \F_{\text{loc}}(U,E)\to \F_{\text{loc}}(V,E),\quad \gamma\mapsto \gamma|_V \]
is linear and continuous. 
\end{lemma}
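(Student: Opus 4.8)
The plan is to reduce everything to the universal property of the initial topology, after first settling the set-theoretic claim. So the first step is to verify that $\gamma|_V$ indeed lies in $\F_{\text{loc}}(V,E)$ whenever $\gamma\in\F_{\text{loc}}(U,E)$. The key observation is that relative compactness interacts well with the inclusion $V\subseteq U$: if $W\in\mathcal{U}$ is relatively compact in $V$, meaning its closure $\overline{W}$ in $[0,\infty)^m$ is compact with $\overline{W}\subseteq V$, then a fortiori $\overline{W}\subseteq U$, so $W$ is also relatively compact in $U$. In particular $W\subseteq V$, and by the defining property of $\F_{\text{loc}}(U,E)$ we get $(\gamma|_V)|_W=\gamma|_W\in\F(W,E)$. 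Since this holds for every such $W$, we conclude $\gamma|_V\in\F_{\text{loc}}(V,E)$, so the restriction map $R\colon\F_{\text{loc}}(U,E)\to\F_{\text{loc}}(V,E)$, $\gamma\mapsto\gamma|_V$, is well defined.

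With $R$ in hand, linearity is immediate from the pointwise vector space operations defining $\F_{\text{loc}}$. For continuity I would invoke the universal property of the initial topology on the target $\F_{\text{loc}}(V,E)$: it suffices to check that, for each $W\in\mathcal{U}$ relatively compact in $V$, the composite $\rho^V_W\circ R$ with the defining restriction map $\rho^V_W\colon\F_{\text{loc}}(V,E)\to\F(W,E)$ is continuous. But this composite sends $\gamma$ to $(\gamma|_V)|_W=\gamma|_W$, hence coincides with the map $\rho^U_W\colon\F_{\text{loc}}(U,E)\to\F(W,E)$, $\gamma\mapsto\gamma|_W$. By the inclusion observation of the first step, $W$ is relatively compact in $U$, so $\rho^U_W$ is one of the maps defining the initial topology on $\F_{\text{loc}}(U,E)$ and is therefore continuous by construction. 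This yields continuity of $R$.

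I do not expect a genuine obstacle here; the proof is essentially a bookkeeping argument. The only point requiring care is the compatibility of the two indexing families of restriction maps on source and target, and this rests entirely on the elementary fact that $\overline{W}\subseteq V\subseteq U$, so relative compactness in $V$ entails relative compactness in $U$, together with the transitivity of restriction $(\gamma|_V)|_W=\gamma|_W$. Notably, none of the analytic axioms of Definition~\ref{pre familysuitable} are needed: the statement follows purely from the definition of the initial topology and Lemma~\ref{pre F Acts} (to know the restriction maps land in the right spaces).
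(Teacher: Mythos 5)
Your proof is correct and is essentially the argument the paper intends: the paper states this lemma without proof, deferring to the corresponding result in \cite{FGL1} whose proof is exactly this bookkeeping with the initial topology (relative compactness in $V$ implies relative compactness in $U$, transitivity of restriction, and the universal property). One tiny remark: your closing appeal to Lemma~\ref{pre F Acts} is not actually needed, since the well-definedness of the maps $\gamma\mapsto\gamma|_W$ on $\F_{\text{loc}}$-spaces is built into the very definition of $\F_{\text{loc}}(U,E)$, so the statement uses none of the axioms at all.
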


\begin{lemma}\label{pre Floc Acts}
Let $E$ and $F$ be finite-dimensional real vector spaces and $U\subseteq [0,\infty)^m$ be open. If $\Phi:E\to F$ is a smooth map, then $\Phi\circ \gamma \in \F_{\text{loc}}(U,F)$ holds for each $\gamma\in \F_{\text{loc}}(U,E)$ and the map
\[ \F_{\text{loc}}(U,E)\to\F_{\text{loc}}(U,F),\quad \gamma\mapsto \Phi\circ \gamma\]
is continuous. Moreover, if $Q$ is an open subset of $E$ and $\Psi:Q\to F$ is a smooth map, then $\Psi\circ \gamma \in \F_{\text{loc}}(U,F)$ holds for each $\gamma\in \F_{\text{loc}}(U,E)$ such that $\gamma(U)\subseteq Q$.
\end{lemma}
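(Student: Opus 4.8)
The plan is to reduce everything to the compact building blocks already established in Lemmas \ref{pre F Acts} and \ref{F new} by exploiting the fact that $\F_{\text{loc}}(U,F)$ carries the initial topology with respect to the restriction maps $\theta\mapsto\theta|_V$, where $V$ ranges over the members of $\mathcal{U}$ that are relatively compact in $U$. By the universal property of this initial topology, the map $\gamma\mapsto\Phi\circ\gamma$ will be continuous into $\F_{\text{loc}}(U,F)$ as soon as, for every such $V$, the composite
\[ \F_{\text{loc}}(U,E)\to\F(V,F),\quad\gamma\mapsto(\Phi\circ\gamma)|_V=\Phi\circ\gamma|_V \]
is continuous; and the membership assertion $\Phi\circ\gamma\in\F_{\text{loc}}(U,F)$ will follow from the same computation, since it only requires $\Phi\circ\gamma|_V\in\F(V,F)$ for each such $V$.

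The key step is a sandwiching argument based on the good-collection axioms. Given $V\in\mathcal{U}$ relatively compact in $U$, I would apply part c) of Definition \ref{pre goodcollection} (with $V$ in the role of the relatively compact set) to produce $W\in\mathcal{U}$ that is relatively compact in $U$ and satisfies $\overline{V}\subseteq W$. Since $\overline{V}$ is compact and contained in $W$, the set $V$ is relatively compact in $W$, so the pair $(W,V)$ meets the hypotheses of Lemma \ref{pre F Acts}. I would then factor the composite above as
\[ \F_{\text{loc}}(U,E)\to\F(W,E)\to\F(V,F),\quad \gamma\mapsto\gamma|_W\mapsto\Phi\circ(\gamma|_W)|_V. \]
The first arrow is one of the defining maps of the initial topology on $\F_{\text{loc}}(U,E)$ (because $W\in\mathcal{U}$ is relatively compact in $U$), hence continuous; the second arrow is continuous by Lemma \ref{pre F Acts} applied with $(W,V)$ in place of $(U,W)$. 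As $(\gamma|_W)|_V=\gamma|_V$, the composite is exactly $\gamma\mapsto\Phi\circ\gamma|_V$, giving both continuity and, via the membership part of Lemma \ref{pre F Acts}, the claim $\Phi\circ\gamma|_V\in\F(V,F)$.

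For the final assertion about a smooth map $\Psi$ defined only on an open subset $Q\subseteq E$, the same sandwich produces, for each $V\in\mathcal{U}$ relatively compact in $U$, a set $W\in\mathcal{U}$ with $V$ relatively compact in $W$ and $\overline{V}\subseteq W$ relatively compact in $U$. Because $\overline{V}\subseteq U$ and $\gamma(U)\subseteq Q$, we have $\gamma(\overline{V})\subseteq Q$, so $\gamma|_W$ lies in the domain of the map $\F(W/V,\Psi)$ furnished by Lemma \ref{F new} (with $W,V,Q,\Psi$ in place of $U,W,V,f$). That lemma gives $\Psi\circ(\gamma|_W)|_V=\Psi\circ\gamma|_V\in\F(V,F)$; since this holds for every such $V$, we conclude $\Psi\circ\gamma\in\F_{\text{loc}}(U,F)$ (and in fact Lemma \ref{F new} yields smoothness of the induced map on the corresponding open set, should it be needed). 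The only real obstacle is the sandwiching step: one must choose the intermediate set $W$ so that $V$ is relatively compact in $W$ while $W$ is itself relatively compact in $U$, so that the relative-compactness hypotheses of Lemmas \ref{pre F Acts} and \ref{F new} and the initial-topology continuity of the restriction $\gamma\mapsto\gamma|_W$ all hold simultaneously — and this is exactly what axiom c) of the good collection delivers.
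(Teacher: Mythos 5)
Your proof is correct and is essentially the intended one: reducing to compact pairs via the universal property of the initial topology on $\F_{\text{loc}}(U,F)$, producing the intermediate set $W\in\mathcal{U}$ with $\overline{V}\subseteq W$ relatively compact in $U$ by axiom c) of Definition \ref{pre goodcollection}, and then invoking Lemma \ref{pre F Acts} for the globally defined $\Phi$ (resp.\ Lemma \ref{F new} for $\Psi$ defined only on $Q$, where $\gamma(\overline{V})\subseteq Q$ puts $\gamma|_W$ in the domain of $\F(W/V,\Psi)$). The paper itself prints no proof of Lemma \ref{pre Floc Acts}, deferring to \cite[Section 4]{FGL1} with the remark that the proofs carry over verbatim to the corners setting; your argument supplies exactly those details, including the key observation that the sandwiching step is unavoidable because $V$ is never relatively compact in itself.
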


\begin{lemma}\label{pre Floc theta}
Let $E$ be a finite-dimensional vector space, $U$ and $V$ be open subsets of $[0,\infty)^m$ and $\Theta:U\to V$ be a smooth diffeomorphism. Then $\gamma\circ \Theta \in  \F_{\text{loc}}(U,E)$ for each $\gamma\in \F_{\text{loc}}(V,E)$ and the map
\[ \F(U,E)\to \F(V,E),\quad \gamma\mapsto \gamma \circ \Theta \]
is continuous.
\end{lemma}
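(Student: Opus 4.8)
The plan is to prove the two assertions in turn, reducing the continuity statement to the Pullback Axiom~(PB) together with the definition of the initial topology on $\F_{\text{loc}}$. Note first that, since $\Theta\colon U\to V$, the assignment $\gamma\mapsto\gamma\circ\Theta$ can only carry $\F_{\text{loc}}(V,E)$ into $\F_{\text{loc}}(U,E)$; thus the displayed map is
\[
\F_{\text{loc}}(V,E)\to\F_{\text{loc}}(U,E),\quad \gamma\mapsto\gamma\circ\Theta .
\]
By the definition of $\F_{\text{loc}}(U,E)$ and of its initial topology, it suffices to show that for every $W\in\mathcal{U}$ which is relatively compact in~$U$, the map $\gamma\mapsto (\gamma\circ\Theta)|_W$ is well defined into $\F(W,E)$ and is continuous on $\F_{\text{loc}}(V,E)$. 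This mirrors the pattern already used for restrictions in Lemma~\ref{pre Floc Rest} and post-composition in Lemma~\ref{pre Floc Acts}.

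So I would fix such a $W$. The crucial preparatory step is to manufacture an intermediate open set on which $\Theta$ restricts to a diffeomorphism onto a member of~$\mathcal{U}$. First I would use property~(c) of a good collection to obtain $\tilde U\in\mathcal{U}$ relatively compact in $U$ with $\overline W\subseteq\tilde U$. Since $\tilde U\in\mathcal{U}$ is relatively compact in the domain $U$ of the diffeomorphism $\Theta$, property~(d) then yields $V':=\Theta(\tilde U)\in\mathcal{U}$. A short point-set argument shows $V'$ is relatively compact in $V$: as $\overline{\tilde U}$ is a compact subset of $U$ and $\Theta$ is a homeomorphism onto $V$, the set $\Theta(\overline{\tilde U})$ is compact, contains $V'$, and is contained in $V$, whence $\overline{V'}\subseteq\Theta(\overline{\tilde U})\subseteq V$. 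Thus $\Theta|_{\tilde U}\colon\tilde U\to V'$ is a smooth diffeomorphism with $V',W\in\mathcal{U}$ and $\overline W\subseteq\tilde U$.

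Now I would invoke Axiom~(PB) with $\tilde U$, $V'$, $W$ and $\Theta|_{\tilde U}$ in place of the axiom's $U$, $V$, $W$ and $\Theta$: it gives $\delta\circ\Theta|_W\in\F(W,E)$ for every $\delta\in\F(V',E)$, together with continuity of $\F(\Theta|_W,E)\colon\F(V',E)\to\F(W,E)$. Since $V'\in\mathcal{U}$ is relatively compact in $V$, the definition of $\F_{\text{loc}}(V,E)$ yields $\gamma|_{V'}\in\F(V',E)$ for every $\gamma\in\F_{\text{loc}}(V,E)$; and because $\Theta(W)\subseteq\Theta(\tilde U)=V'$, we have $(\gamma|_{V'})\circ\Theta|_W=(\gamma\circ\Theta)|_W$. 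This establishes $(\gamma\circ\Theta)|_W\in\F(W,E)$, and letting $W$ range over all admissible sets gives the membership $\gamma\circ\Theta\in\F_{\text{loc}}(U,E)$.

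For continuity it then remains to observe that $\gamma\mapsto(\gamma\circ\Theta)|_W$ factors as
\[
\F_{\text{loc}}(V,E)\xrightarrow{\ \gamma\mapsto\gamma|_{V'}\ }\F(V',E)\xrightarrow{\ \F(\Theta|_W,E)\ }\F(W,E).
\]
The first arrow is continuous because $V'\in\mathcal{U}$ is relatively compact in $V$, so it is one of the maps defining the initial topology on $\F_{\text{loc}}(V,E)$; the second is continuous by~(PB). Hence each defining map of the initial topology on $\F_{\text{loc}}(U,E)$ composes continuously, and the claim follows. The hard part here is not analytic but combinatorial: arranging $\tilde U$ and $V'=\Theta(\tilde U)$ to lie in $\mathcal{U}$ with the correct relative-compactness relations so that the hypotheses of~(PB) are genuinely satisfied. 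Once this bookkeeping is secured through properties~(c) and~(d), the entire statement collapses to~(PB) and the definition of the initial topologies.
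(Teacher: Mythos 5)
Your proof is correct and takes essentially the same route as the paper: the paper gives no separate proof of this lemma, deferring to \cite[Section 4]{FGL1}, and that argument is precisely your reduction to Axiom (PB), using properties c) and d) of a good collection to produce $\tilde{U}\in\mathcal{U}$ and $V'=\Theta(\tilde{U})\in\mathcal{U}$ with the required relative-compactness relations, followed by the universal property of the initial topologies on the $\F_{\text{loc}}$-spaces. You also correctly repaired the typo in the statement, where the displayed map should read $\F_{\text{loc}}(V,E)\to \F_{\text{loc}}(U,E)$, $\gamma\mapsto \gamma\circ\Theta$.
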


\begin{lemma}\label{pre glue}
Let $E$ be a finite-dimensional vector space, $U_1,...,U_n$ be open subsets of $[0,\infty)^m$ and $\gamma_j\in  \F_{\text{loc}}(U_j,E)$ for $j\in \{1,...,n\}$ such that
\[ \gamma_j|_{U_i \cap U_j} = \gamma_i|_{U_i\cap U_j},\quad \text{ for all } i,j\in\{1,...,n\}.\]
If $V\in \mathcal{U}$ is relatively compact in $U_1\cup...\cup U_n$, then $\tilde{\gamma}\in \F(V,E)$ holds for the map $\tilde{\gamma}:V\to E$ defined piecewise via $\tilde{\gamma}(x)=\gamma_j(x)$ for $x\in V\cap U_j$. \\
Moreover, if $\mathcal{E}$ is the vector subspace of $\prod_{j=1}^n \F_{\text{loc}}(U_j,E)$ given by the $n$-tuples $(\gamma_1,...,\gamma_n)$ such that $\gamma_j|_{U_i \cap U_j} = \gamma_i|_{U_i\cap U_j}$, for all  $i,j\in\{1,...,n\}$, endowed with the subspace topology, then the gluing map
\[ \text{glue}:\mathcal{E}\to \F(V,E),\quad (\gamma_1,...,\gamma_n)\mapsto \tilde{\gamma} \]
is continuous linear.
\end{lemma}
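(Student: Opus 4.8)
The plan is to glue via a finite partition of unity and to express the glued function as a finite sum of compactly supported pieces, each produced by composing continuous linear maps coming from the axioms. I work with a general finite-dimensional $E$ throughout, since Axioms (GL) and (MU) and Lemma \ref{pre F Acts} are available for arbitrary $E$; thus no reduction to $E=\R$ is needed. Write $\Omega:=U_1\cup\cdots\cup U_n$, so that $\overline V$ is a compact subset of the open set $\Omega$. First I fix a convenient common domain: applying property (c) of Definition \ref{pre goodcollection} to the pair $V\subseteq\Omega$, I obtain $P\in\mathcal{U}$, relatively compact in $\Omega$, with $\overline V\subseteq P$.

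Next I build a partition of unity adapted to $\mathcal{U}$. For each $p\in\overline V$ I choose an index $c(p)$ with $p\in U_{c(p)}$ and, since $\mathcal{U}$ is a basis and by property (b), a basic set $B_p\in\mathcal{U}$ with $p\in B_p$ and $\overline{B_p}\subseteq U_{c(p)}\cap P$ compact. Finitely many $B_{p_1},\dots,B_{p_r}$ cover $\overline V$; setting $V_i':=B_{p_i}$ and $c(i):=c(p_i)$, Lemma \ref{bumpfunction} yields $\psi_1,\dots,\psi_r\in C_c^\infty([0,\infty)^m,\R)$ with $\operatorname{supp}(\psi_i)\subseteq V_i'$, $0\le\psi_i\le 1$ and $\sum_{i=1}^r\psi_i=1$ on an open neighbourhood of $\overline V$. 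By construction each $V_i'\in\mathcal{U}$ is relatively compact in $U_{c(i)}$ and satisfies $\overline{V_i'}\subseteq P$.

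The key identity is then that on $V$ one has $\tilde\gamma=\sum_{i=1}^r\widetilde{\psi_i\,\gamma_{c(i)}}$, where each summand denotes extension by zero. Indeed, for $x\in V$ the relation $\sum_i\psi_i(x)=1$ holds, and whenever $\psi_i(x)\neq 0$ we have $x\in\operatorname{supp}(\psi_i)\subseteq U_{c(i)}$, so the compatibility $\gamma_{c(i)}|_{U_i\cap U_j}=\gamma_i|_{U_i\cap U_j}$ forces $\gamma_{c(i)}(x)=\tilde\gamma(x)$; summing gives the claim. It therefore suffices to show that for each $i$ the map $\gamma\mapsto\widetilde{\psi_i\,\gamma}|_V$ is a continuous linear map $\F_{\mathrm{loc}}(U_{c(i)},E)\to\F(V,E)$. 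I factor it as the composition of: the restriction $\F_{\mathrm{loc}}(U_{c(i)},E)\to\F(V_i',E)$, continuous because $V_i'\in\mathcal{U}$ is relatively compact in $U_{c(i)}$ and hence one of the maps defining the topology of $\F_{\mathrm{loc}}$; multiplication $m^E_{\psi_i|_{V_i'}}$ by $\psi_i|_{V_i'}\in C_c^\infty(V_i',\R)$, continuous by Axiom (MU) and landing in $\F_{K_i}(V_i',E)$ with $K_i:=\operatorname{supp}(\psi_i)$; the extension-by-zero $e^E_{P,V_i',K_i}$, continuous by Axiom (GL) since $V_i'\subseteq P$ are both in $\mathcal{U}$; and finally the restriction $\F(P,E)\to\F(V,E)$, continuous by Lemma \ref{pre F Acts} because $\overline V\subseteq P$ makes $V$ relatively compact in $P$. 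Each factor is continuous linear, so their composition is as well.

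Putting this together proves the first assertion, $\tilde\gamma\in\F(V,E)$, and shows that $\mathrm{glue}$ coincides on $\mathcal{E}$ with the restriction to $\mathcal{E}$ of the finite sum $\sum_{i=1}^r T_i\circ\mathrm{pr}_{c(i)}$, where $T_i$ is the $i$-th factored map above and $\mathrm{pr}_{c(i)}\colon\prod_j\F_{\mathrm{loc}}(U_j,E)\to\F_{\mathrm{loc}}(U_{c(i)},E)$ is the continuous coordinate projection; hence $\mathrm{glue}$ is continuous linear. I expect the main obstacle to be not the continuity bookkeeping but the geometric set-up: producing a single $P\in\mathcal{U}$ relatively compact in $\Omega$ together with a partition of unity whose pieces are supported in basic sets $V_i'\in\mathcal{U}$ that are relatively compact in the respective $U_{c(i)}$ and whose closures lie in $P$. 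This is precisely what makes the three operations (restriction, (MU), (GL)) applicable with matching domains, and it is where properties (b) and (c) of Definition \ref{pre goodcollection} together with the cut-off Lemma \ref{bumpfunction} are essential.
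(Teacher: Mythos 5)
Your proposal is correct and matches the approach the paper itself relies on: the paper gives no separate proof of Lemma~\ref{pre glue}, deferring to \cite[Section 4]{FGL1} with the remark that cut-off functions on manifolds with corners (Lemma~\ref{bumpfunction}) make that argument carry over, and that argument is exactly your decomposition $\tilde\gamma=\sum_i \widetilde{\psi_i\gamma_{c(i)}}$ via a subordinate partition of unity, with each summand factored through restriction, Axiom (MU), Axiom (GL), and a final restriction. Your geometric set-up (using Definition~\ref{pre goodcollection}(a)--(c) to produce $P$ and the basic sets $V_i'$ with matching domains) is sound, so there is no gap to report.
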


\begin{de}
Let $M$ be an $m$-dimensional compact smooth manifold with corners and $N$ an $n$ dimensional smooth manifold. Let $\F(M,N)$ be the set of all  functions $\gamma:M\to N$ such that for each $p\in M$, exist charts $\phi_M:U_M \to V_M$ of $M$ with $V_M\in\mathcal{U}$ and $\phi_N:U_N\to V_N$ a chart of $N$, such that $p\in U_M$, $\gamma(U_M)\subseteq U_N$ and $\phi_N\circ \gamma \circ \phi_M^{-1} \in \F(V_M,\R^n)$.
\end{de}

\begin{re}
For a compact smooth manifold without boundary $M$, the properties of maps between $\F$-spaces are studied in Section 5 of \cite{FGL1}. These properties can be extended to the case with corners. We recall the more important results relevant for our context.
\end{re}

\begin{lemma}
Let $M$ be an $m$-dimensional compact smooth manifold with corners, $N$ be a $n$-dimensional smooth manifold and $\gamma :M\to N$ be a continuous map. Then $\gamma\in \F(M,N)$ if and only if $\phi_N\circ \gamma\circ \phi_M^{-1}\in \F_{loc}(V_M,\R^n)$ for all charts $\phi_M:U_M \to V_M$ and $\phi_N:U_N \to V_N$ of $M$ and $N$, respectively, such that $\gamma(U_M)\subseteq U_N$.
\end{lemma}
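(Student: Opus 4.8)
The statement is an equivalence whose two implications are of rather different character. The implication ``$\Leftarrow$'' merely upgrades an $\F_{\text{loc}}$-condition to the $\F$-condition appearing in the definition of $\F(M,N)$, by passing to a smaller relatively compact chart; the implication ``$\Rightarrow$'' is the substantial one and rests on a change-of-charts computation fed into the gluing Lemma~\ref{pre glue}. For ``$\Leftarrow$'', I would verify the defining property of $\F(M,N)$ at an arbitrary $p\in M$. Using continuity of $\gamma$, first choose a chart $\phi_N\colon U_N\to V_N$ of $N$ around $\gamma(p)$; since $\gamma^{-1}(U_N)$ is an open neighbourhood of $p$ and $\mathcal{U}$ is a basis (Definition~\ref{pre goodcollection}(a)), choose a chart $\phi_M\colon U_M\to V_M$ of $M$ with $p\in U_M\subseteq\gamma^{-1}(U_N)$, so that $\gamma(U_M)\subseteq U_N$. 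By hypothesis $\delta:=\phi_N\circ\gamma\circ\phi_M^{-1}\in\F_{\text{loc}}(V_M,\R^n)$. As $[0,\infty)^m$ is locally compact and $\mathcal{U}$ is a basis, pick $W\in\mathcal{U}$ with $\phi_M(p)\in W$ whose closure is compact and contained in $V_M$; then $W$ is relatively compact in $V_M$, and the definition of $\F_{\text{loc}}$ gives $\delta|_W\in\F(W,\R^n)$. Replacing $\phi_M$ by $\phi_M|_{\phi_M^{-1}(W)}\colon\phi_M^{-1}(W)\to W$, which is again a chart with image $W\in\mathcal{U}$, the pair $(\phi_M|_{\phi_M^{-1}(W)},\phi_N)$ witnesses the defining condition at $p$; hence $\gamma\in\F(M,N)$.

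For ``$\Rightarrow$'', fix arbitrary charts $\phi_M\colon U_M\to V_M$ and $\phi_N\colon U_N\to V_N$ with $\gamma(U_M)\subseteq U_N$, and set $\delta:=\phi_N\circ\gamma\circ\phi_M^{-1}\colon V_M\to V_N$. To show $\delta\in\F_{\text{loc}}(V_M,\R^n)$ it suffices, by definition, to fix $W\in\mathcal{U}$ relatively compact in $V_M$ and prove $\delta|_W\in\F(W,\R^n)$. The set $K:=\phi_M^{-1}(\overline{W})$ is compact in $U_M$, and since $\gamma\in\F(M,N)$ every $q\in K$ admits charts $\psi_M\colon O_M\to P_M$ (with $P_M\in\mathcal{U}$) around $q$ and $\psi_N\colon O_N\to P_N$ around $\gamma(q)$ such that $\gamma(O_M)\subseteq O_N$ and $\psi_N\circ\gamma\circ\psi_M^{-1}\in\F(P_M,\R^n)\subseteq\F_{\text{loc}}(P_M,\R^n)$. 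On $\Omega_q:=\phi_M(U_M\cap O_M)\subseteq V_M$ one has the factorization
\[
\delta|_{\Omega_q}=(\phi_N\circ\psi_N^{-1})\circ(\psi_N\circ\gamma\circ\psi_M^{-1})\circ(\psi_M\circ\phi_M^{-1}),
\]
in which $\psi_M\circ\phi_M^{-1}$ is a smooth diffeomorphism between open subsets of $[0,\infty)^m$ and $\phi_N\circ\psi_N^{-1}$ is smooth on the open set $\psi_N(U_N\cap O_N)\subseteq\R^n$. Chaining the restriction Lemma~\ref{pre Floc Rest}, the pullback Lemma~\ref{pre Floc theta}, and the pushforward Lemma~\ref{pre Floc Acts} in its version for smooth maps on open subsets --- the constraints $\gamma(U_M)\subseteq U_N$ and $\gamma(O_M)\subseteq O_N$ ensuring that the intermediate map sends $\Omega_q$ into $\psi_N(U_N\cap O_N)$ --- I obtain $\delta|_{\Omega_q}\in\F_{\text{loc}}(\Omega_q,\R^n)$.

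It remains to glue. By compactness of $K$ finitely many domains $O_M^{(1)},\dots,O_M^{(r)}$ cover $K$, so the corresponding sets $\Omega_1,\dots,\Omega_r$ are open in $V_M$ with $\overline{W}=\phi_M(K)\subseteq\Omega_1\cup\cdots\cup\Omega_r$; in particular $W$ is relatively compact in $\Omega_1\cup\cdots\cup\Omega_r$. Being restrictions of the single function $\delta$, the pieces $\delta|_{\Omega_j}\in\F_{\text{loc}}(\Omega_j,\R^n)$ agree on all overlaps, so Lemma~\ref{pre glue} applies and produces $\delta|_W\in\F(W,\R^n)$. Since $W$ was an arbitrary member of $\mathcal{U}$ relatively compact in $V_M$, this gives $\delta\in\F_{\text{loc}}(V_M,\R^n)$, completing the implication.

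The main obstacle lies entirely in the ``$\Rightarrow$'' direction, in two places. First, the bookkeeping of domains in the change-of-charts factorization must be arranged so that the three transformation lemmas chain together; the delicate point is checking that the intermediate function $\psi_N\circ\gamma\circ\phi_M^{-1}$ takes values in the domain $\psi_N(U_N\cap O_N)$ of the $N$-side transition map, which is exactly where the inclusions $\gamma(U_M)\subseteq U_N$ and $\gamma(O_M)\subseteq O_N$ are used. Second, one must verify the hypotheses of Lemma~\ref{pre glue}: the compatibility of the pieces on overlaps is automatic here, since all of them restrict the one map $\delta$, but one does need to confirm that $\overline{W}$ is covered by the finitely many $\Omega_j$ so that $W$ is relatively compact in their union. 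By contrast the ``$\Leftarrow$'' direction is routine, using only that $\mathcal{U}$ is a basis and that $[0,\infty)^m$ is locally compact.
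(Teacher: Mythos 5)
Your proof is correct: both implications are sound, the chart bookkeeping in the factorization $\delta|_{\Omega_q}=(\phi_N\circ\psi_N^{-1})\circ(\psi_N\circ\gamma\circ\psi_M^{-1})\circ(\psi_M\circ\phi_M^{-1})$ is handled properly (including the value constraint needed for Lemma \ref{pre Floc Acts}), and the hypotheses of the gluing Lemma \ref{pre glue} are verified. Note that the paper itself states this lemma without proof---it is recalled from \cite{FGL1} and asserted to extend to manifolds with corners---so your write-up supplies the missing argument, and it does so by exactly the local-to-global route (Lemmas \ref{pre Floc Rest}, \ref{pre Floc theta}, \ref{pre Floc Acts} for the transition maps, then Lemma \ref{pre glue}) that the paper's preliminaries are designed to support.
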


\begin{lemma}\label{pre FM Acts}
Let $\Phi:N_1\to N_2$ be a smooth map between finite-dimensional smooth manifolds, and $M$ be a compact smooth manifold. Then $\Phi\circ \eta\in \F(M,N_2)$ for each $\eta\in \F(M,N_1)$.
\end{lemma}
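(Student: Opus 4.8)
The plan is to verify the defining property of $\F(M,N_2)$ pointwise: at each $p\in M$ I will produce suitable charts exhibiting a local representative of $\Phi\circ\eta$ that lands in an $\F$-space, by reducing $\Phi\circ\eta$ locally to the composition of an $\F$-function with a smooth map between open subsets of Euclidean spaces and invoking Lemma~\ref{F new}. First, $\Phi\circ\eta$ is continuous, being a composite of continuous maps. Fix $p\in M$. Since $\eta\in\F(M,N_1)$, there exist a chart $\phi_M\colon U_M\to V_M$ of $M$ with $V_M\in\mathcal{U}$ and $p\in U_M$, together with a chart $\psi_1\colon W_1\to Z_1$ of $N_1$ around $\eta(p)$, such that $\eta(U_M)\subseteq W_1$ and $\sigma:=\psi_1\circ\eta\circ\phi_M^{-1}\in\F(V_M,\R^{n_1})$, where $n_1=\dim N_1$. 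I would then choose a chart $\psi_2\colon W_2\to Z_2$ of $N_2$ around $\Phi(\eta(p))$ and set $O:=W_1\cap\Phi^{-1}(W_2)$, an open neighborhood of $\eta(p)$ in $N_1$. The local representative $g:=\psi_2\circ\Phi\circ\psi_1^{-1}\colon\psi_1(O)\to\R^{n_2}$ is smooth, and $\psi_1(O)$ is open in $\R^{n_1}$.

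Next I would shrink the chart domain on $M$ to meet the hypotheses of Lemma~\ref{F new}. The set $\sigma^{-1}(\psi_1(O))$ is open in $V_M$ and contains $\phi_M(p)$. By property (a) of the good collection (Definition~\ref{pre goodcollection}), there is $V''\in\mathcal{U}$ with $\phi_M(p)\in V''\subseteq\sigma^{-1}(\psi_1(O))$; applying property (b) to the compact set $\{\phi_M(p)\}\subseteq V''$ yields $V_M'\in\mathcal{U}$ with compact closure $\overline{V_M'}\subseteq V''$ and $\phi_M(p)\in V_M'$. Consequently $V_M'$ is relatively compact in $V_M$ and $\sigma(\overline{V_M'})\subseteq\psi_1(O)$.

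I can now apply Lemma~\ref{F new} with $U=V_M$, $W=V_M'$, $E=\R^{n_1}$, $F=\R^{n_2}$, the open set $\psi_1(O)\subseteq E$, the smooth map $g$, and $\gamma=\sigma$, which lies in the domain of $\F(U/W,g)$ precisely because $\sigma(\overline{V_M'})\subseteq\psi_1(O)$. This gives
\[
\psi_2\circ(\Phi\circ\eta)\circ\phi_M^{-1}|_{V_M'}=g\circ\sigma|_{V_M'}\in\F(V_M',\R^{n_2}).
\]
Moreover $(\Phi\circ\eta)(\phi_M^{-1}(V_M'))\subseteq W_2$, since $\phi_M^{-1}(V_M')\subseteq\eta^{-1}(O)\subseteq\eta^{-1}(\Phi^{-1}(W_2))$. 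Hence the chart $\phi_M|_{\phi_M^{-1}(V_M')}\colon\phi_M^{-1}(V_M')\to V_M'$ of $M$ (with $V_M'\in\mathcal{U}$) and the chart $\psi_2$ of $N_2$ witness the defining condition of $\F(M,N_2)$ at $p$. As $p\in M$ was arbitrary, this would establish $\Phi\circ\eta\in\F(M,N_2)$.

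I expect the main obstacle to be arranging the closure condition $\sigma(\overline{V_M'})\subseteq\psi_1(O)$ demanded by Lemma~\ref{F new}: it is not enough to restrict to the open preimage $\sigma^{-1}(\psi_1(O))$, one must pass to a strictly smaller, relatively compact neighborhood whose closure still maps into the domain of the local representative $g$, all while keeping the new chart domain inside $\mathcal{U}$. Properties (a) and (b) of the good collection are exactly tailored to perform this shrinking, so the delicate part is really just to invoke them in the right order; the remaining steps are routine chart bookkeeping.
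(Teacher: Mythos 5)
Your proof is correct. Note, however, that the paper itself gives no proof of Lemma \ref{pre FM Acts}: it is one of the results quoted from Section 5 of \cite{FGL1}, with the preceding remark asserting that the proofs carry over to the corners setting, so your argument is a self-contained verification rather than a reproduction of an in-paper proof. All the key steps check out: the shrinking via axioms (a) and (b) of Definition \ref{pre goodcollection} does produce $V_M'\in\mathcal{U}$ with $\overline{V_M'}$ compact, $\phi_M(p)\in V_M'$ and $\sigma(\overline{V_M'})\subseteq\psi_1(O)$, which is exactly the domain condition of Lemma \ref{F new}, and restricting the chart $\phi_M$ to $\phi_M^{-1}(V_M')$ stays inside the maximal atlas with image in $\mathcal{U}$, as the definition of $\F(M,N_2)$ requires. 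One remark: invoking Lemma \ref{F new}, which is a smoothness statement, is heavier than necessary, since you only use its well-definedness clause $g\circ\sigma|_{V_M'}\in\F(V_M',\R^{n_2})$. A route closer to the machinery of \cite{FGL1} would be: restrict $\sigma$ to the open set $U':=\sigma^{-1}(\psi_1(O))$, note $\sigma|_{U'}\in\F_{\text{loc}}(U',\R^{n_1})$ by Lemma \ref{pre Floc Rest}, apply the second clause of Lemma \ref{pre Floc Acts} (smooth maps defined on an open subset $Q\subseteq E$ act on $\F_{\text{loc}}$-functions whose image lies in $Q$) to get $g\circ\sigma|_{U'}\in\F_{\text{loc}}(U',\R^{n_2})$, and then pass to $V_M'\in\mathcal{U}$ relatively compact in $U'$, where membership in $\F(V_M',\R^{n_2})$ is immediate from the definition of $\F_{\text{loc}}$. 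Both routes rest on the same localization idea; yours trades the $\F_{\text{loc}}$ formalism for a single application of the paper's new Lemma \ref{F new}, which is perfectly legitimate and arguably more direct.
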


\begin{re}
    Let $M$ be an $n$-dimensional compact smooth manifold with corners and $E$ be a finite-dimensional vector space. We give $\F(M,E)$ the initial topology with respect to the maps 
    \[ \F(M,E)\to \F(V_\phi,E),\quad \gamma\mapsto\gamma\circ \phi^{-1}\]
    for $\phi:U_\phi\to V_\phi$ in the maximal $C^\infty$ atlas of $M$.
\end{re}

\begin{lemma}\label{lemmatopo}
Let $M$ be a compact smooth manifold with corners and $E$ be a finite-dimensional vector space. For $i\in \{1,...,k\}$, let $\phi_i:U_i\to V_i$ be charts of $M$,  $W_i\in \mathcal{U}$ be a relatively compact subset of $V_i$ with $M=\cup_{i=1}^k \varphi_i^{-1} (W_i)$. Then the linear map
\[ \Theta:\F(M,E)\to \prod_{i=1}^k \F(W_i,E),\quad \gamma\mapsto \left(\gamma\circ \phi_i^{-1}|_{W_i}\right)_{i=1}^k \]
is a topological embedding with closed image.\\ 
The image $\text{Im}(\Theta)$ is the set $S$ of all $(\gamma_i)_{i=1}^k \in \prod_{i=1}^k \F(W_i,E)$ such that $\gamma_i \circ \phi_i (p) = \gamma_j \circ \phi_j (p)$ for all $i, j\in \{1,...,k\}$ and $p\in \phi_i^{-1} (W_i)\cap \phi_i^{-1} (W_j)$.
\end{lemma}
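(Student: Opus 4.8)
The plan is to verify four things in turn: that $\Theta$ is injective, that it is continuous, that it is a topological embedding, and that its image is exactly the closed set $S$. Injectivity is immediate: since $M=\bigcup_{i=1}^k \phi_i^{-1}(W_i)$, the tuple $(\gamma\circ\phi_i^{-1}|_{W_i})_{i=1}^k$ determines the value of $\gamma$ at every point of $M$, so $\Theta(\gamma)=\Theta(\gamma')$ forces $\gamma=\gamma'$. Continuity of $\Theta$ is also routine: by definition of the initial topology on $\F(M,E)$ the map $\gamma\mapsto\gamma\circ\phi_i^{-1}$ is continuous, and composing it with the restriction $\F(V_i,E)\to\F(W_i,E)$, which is continuous because $W_i$ is relatively compact in $V_i$ (Lemma~\ref{pre F Acts}), shows each component of $\Theta$ is continuous; hence so is $\Theta$.

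The heart of the argument is showing that $\Theta$ is a topological embedding, i.e. that the topology of $\F(M,E)$ agrees with the initial topology induced by the finite family $\Theta$. As $\Theta$ is continuous, the latter is coarser, so it suffices to show that each defining map $\gamma\mapsto\gamma\circ\phi^{-1}|_W$, for an arbitrary chart $\phi\colon U_\phi\to V_\phi$ and $W\in\mathcal{U}$ relatively compact in $V_\phi$, is continuous with respect to the $\Theta$-topology. I would fix such $\phi,W$ and put $O_i:=\phi\bigl(U_\phi\cap\phi_i^{-1}(W_i)\bigr)$, an open subset of $V_\phi$, together with the transition diffeomorphism $\theta_i:=\phi_i\circ\phi^{-1}\colon O_i\to W_i$, whose image lies in $W_i$. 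Because $M=\bigcup_i\phi_i^{-1}(W_i)$, the sets $O_i$ cover $V_\phi$, so $W$ is relatively compact in $\bigcup_i O_i$. On $O_i$ one has $\gamma\circ\phi^{-1}|_{O_i}=(\gamma\circ\phi_i^{-1}|_{W_i})\circ\theta_i=\gamma_i\circ\theta_i$, and the map $\gamma_i\mapsto\gamma_i\circ\theta_i$ from $\F(W_i,E)$ into $\F_{\mathrm{loc}}(O_i,E)$ is continuous by Lemmas~\ref{pre Floc Rest} and~\ref{pre Floc theta} (regarding $\gamma_i\in\F(W_i,E)\subseteq\F_{\mathrm{loc}}(W_i,E)$, restricting to $\theta_i(O_i)$, and pulling back along $\theta_i$). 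The local pieces $\gamma_i\circ\theta_i$ agree on each overlap $O_i\cap O_j$ (both equal $\gamma\circ\phi^{-1}$ there), so the gluing map of Lemma~\ref{pre glue} is defined and continuous and sends $(\gamma_i\circ\theta_i)_i$ to $\gamma\circ\phi^{-1}|_W\in\F(W,E)$. Composing the continuous maps $\Theta(\gamma)=(\gamma_i)_i\mapsto(\gamma_i\circ\theta_i)_i\mapsto\gamma\circ\phi^{-1}|_W$ yields the required factorization through $\Theta$, proving the embedding. This gluing step, matching an arbitrary chart against the fixed finite cover, is where I expect the only genuine difficulty, since it requires careful bookkeeping of the transition-map domains and of the relative-compactness hypotheses needed to apply Lemmas~\ref{pre Floc theta} and~\ref{pre glue}.

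Finally I would identify the image and show it is closed. The inclusion $\mathrm{Im}(\Theta)\subseteq S$ is clear: for $\gamma\in\F(M,E)$ and $p\in\phi_i^{-1}(W_i)\cap\phi_j^{-1}(W_j)$, both $\gamma_i(\phi_i(p))$ and $\gamma_j(\phi_j(p))$ equal $\gamma(p)$. For the reverse inclusion, given $(\gamma_i)_i\in S$ I would define $\gamma\colon M\to E$ by $\gamma(p):=\gamma_i(\phi_i(p))$ whenever $p\in\phi_i^{-1}(W_i)$; the relations defining $S$ make this well defined, and the same gluing argument as above, carried out in an arbitrary chart $\phi$, shows $\gamma\circ\phi^{-1}|_W\in\F(W,E)$ for every relatively compact $W\in\mathcal{U}$, whence $\gamma\in\F(M,E)$ with $\Theta(\gamma)=(\gamma_i)_i$. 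Closedness of $S$ is then soft: for fixed $i,j$ and $p\in\phi_i^{-1}(W_i)\cap\phi_j^{-1}(W_j)$ the map $(\gamma_\ell)_\ell\mapsto\gamma_i(\phi_i(p))-\gamma_j(\phi_j(p))$ is continuous, since each inclusion $\F(W_\ell,E)\hookrightarrow BC(W_\ell,E)$ is continuous and point evaluation on $BC$ is bounded; as $E$ is Hausdorff, each such condition defines a closed set, and $S$ is their intersection, hence closed. Together with the embedding property this shows $\Theta$ has closed image, completing the proof.
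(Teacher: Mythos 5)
Your proof is correct, and it takes essentially the approach that the paper relies on: the paper itself states this lemma without proof (importing it from Section 5 of \cite{FGL1} with the remark that the arguments extend to corners), but your strategy --- injectivity and continuity from the initial topology, pullback of the components along the transition diffeomorphisms into the $\F_{\mathrm{loc}}$-spaces via Lemmas \ref{pre Floc Rest} and \ref{pre Floc theta}, and then Lemma \ref{pre glue} to recover $\gamma\circ\phi^{-1}|_W$ continuously from $\Theta(\gamma)$, plus pointwise-evaluation arguments for closedness of $S$ --- is precisely the method the paper uses to prove the analogous statement for section spaces, Proposition \ref{topembedding}. The only point to make explicit is that the gluing step is applied on $\mathrm{Im}(\Theta)$ (respectively on $S$), where the compatibility hypothesis of Lemma \ref{pre glue} holds, which is exactly what is needed to conclude that $\Theta^{-1}$ is continuous on the image.
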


\begin{lemma}\label{pre FExE FExFE}
Let $M$ be an $m$-dimensional compact manifold with corners. If $E_1$ and $E_2$ are finite-dimensional vector spaces, we consider the projections $\text{pr}_j:E_1\times E_2 \to E_j$, $(x_1,x_2)\mapsto x_j$ for $j\in \{1,2\}$. Then
\[ \left( \F(M,\text{pr}_1),\F(M,\text{pr}_2)\right): \F(M,E_1\times E_2)\to \F(M,E_1)\times \F(M,E_2),\text{  }\gamma\mapsto (\text{pr}_1,\text{pr}_2)\circ \gamma\]
is an isomorphism of topological vector spaces.
\end{lemma}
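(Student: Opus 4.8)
The plan is to reduce the global statement to an elementary local product decomposition and then transport it through the topological embeddings provided by Lemma~\ref{lemmatopo}. Write $\Lambda:=(\F(M,\text{pr}_1),\F(M,\text{pr}_2))$. First I would record that $\Lambda$ is well defined and linear: each $\text{pr}_j$ is smooth, so $\text{pr}_j\circ\gamma\in\F(M,E_j)$ by Lemma~\ref{pre FM Acts}, and linearity is immediate from the pointwise vector-space operations.

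The local heart of the argument is the observation that for every $U\in\mathcal{U}$ the map
\[
(\F(U,\text{pr}_1),\F(U,\text{pr}_2))\colon \F(U,E_1\times E_2)\to \F(U,E_1)\times\F(U,E_2)
\]
is an isomorphism of topological vector spaces. This I would deduce directly from the isomorphism~(\ref{2 eq iso}): fixing a basis $b_1,\dots,b_{n_1}$ of $E_1$ and $c_1,\dots,c_{n_2}$ of $E_2$ (so $n_j=\dim E_j$), their union is a basis of $E_1\times E_2$, and under~(\ref{2 eq iso}) both sides become $\F(U,\R)^{n_1+n_2}$ in a way that intertwines $\F(U,\text{pr}_j)$ with the projection onto the corresponding block of coordinates. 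In particular each $\F(U,\text{pr}_j)$ is continuous and the pair is a linear homeomorphism.

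To globalize, I would fix charts $\phi_i\colon U_i\to V_i$ and relatively compact $W_i\in\mathcal{U}$ with $W_i\subseteq V_i$ and $M=\bigcup_i\phi_i^{-1}(W_i)$, and invoke Lemma~\ref{lemmatopo} to obtain topological embeddings $\Theta^{E}\colon\F(M,E)\to\prod_i\F(W_i,E)$ for $E\in\{E_1,E_2,E_1\times E_2\}$, with closed images $S^E$ cut out by the overlap-compatibility equations. Assembling the local isomorphisms over $i$ gives a topological isomorphism $\prod_i\F(W_i,E_1\times E_2)\cong\prod_i\F(W_i,E_1)\times\prod_i\F(W_i,E_2)$ that makes the evident square with $\Lambda$, $\Theta^{E_1\times E_2}$ and $(\Theta^{E_1},\Theta^{E_2})$ commute. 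Because an $E_1\times E_2$-valued compatibility equation holds exactly when its $E_1$- and $E_2$-components hold, this isomorphism carries $S^{E_1\times E_2}$ onto $S^{E_1}\times S^{E_2}$; hence $\Lambda$ is a composite of homeomorphisms onto their images and is therefore an isomorphism of topological vector spaces.

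The bulk of the work is bookkeeping, and I expect no serious obstacle. The one point deserving care is the compatibility-subspace matching in the final step: one must check that the closed image $S^{E_1\times E_2}$ is sent \emph{onto} $S^{E_1}\times S^{E_2}$ and not merely into it, i.e. that $\Lambda$ is surjective --- equivalently, that for $(\gamma_1,\gamma_2)\in\F(M,E_1)\times\F(M,E_2)$ the pointwise map $p\mapsto(\gamma_1(p),\gamma_2(p))$ genuinely lies in $\F(M,E_1\times E_2)$. This is precisely guaranteed by the local product decomposition together with the characterization of $\text{Im}(\Theta)$ in Lemma~\ref{lemmatopo}, so the diagram chase closes.
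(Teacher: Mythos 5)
Your proof is correct. Note, however, that the paper contains no proof of this lemma to compare against: it is one of the results recalled from Section~5 of \cite{FGL1} (see the remark preceding it) and merely asserted to extend to manifolds with corners, so your argument supplies a proof where the paper gives none. What you do is exactly the pattern the paper itself uses for the analogous statement about section spaces, namely the proposition identifying $\Gamma_{\F}(\gamma_1\times\gamma_2)$ with $\Gamma_{\F}(\gamma_1)\times\Gamma_{\F}(\gamma_2)$: there the author composes the topological embedding $\Phi_\gamma$ of Proposition \ref{topembedding}, a local block isomorphism $\alpha$ coming from (\ref{2 eq iso}), and the inverse embedding, just as you compose $\Theta^{E_1\times E_2}$, the assembled local isomorphisms, and $(\Theta^{E_1},\Theta^{E_2})^{-1}$ from Lemma \ref{lemmatopo}. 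You also correctly isolated the only delicate point, namely that the image $S^{E_1\times E_2}$ is carried \emph{onto} $S^{E_1}\times S^{E_2}$ because the overlap conditions $\gamma_i\circ\phi_i(p)=\gamma_j\circ\phi_j(p)$ in $E_1\times E_2$ hold if and only if they hold in each factor; this is what gives surjectivity, and hence that the inverse is defined and continuous.
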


\begin{lemma}\label{pre FM analytic}
If $M$ is an $m$-dimensional compact smooth manifold with corners, $E$ and $F$ are finite-dimensional $\mathbb{K}$-vector spaces for $\mathbb{K}\in \{\R,\mathbb{C}\}$, $U$ is an open subset of $E$ and $g:U\to F$ is $\mathbb{K}$-analytic, the also the map 
\[ \F(M,g):\F(M,U)\to \F(M,F),\quad \gamma\mapsto g\circ \gamma\]
is $\mathbb{K}$-analytic.
\end{lemma}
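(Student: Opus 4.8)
The plan is to reduce the global statement to a purely local one via the chart embedding of Lemma~\ref{lemmatopo}, and to settle the local statement by upgrading the smoothness already proved in Lemma~\ref{F new}. The local operator is shown to be $\mathbb{C}$-analytic using the identity for its differential inherited from the proof of Lemma~\ref{F new} together with the complex linearity of $dg$, and the case $\mathbb{K}=\R$ is then handled by complexification. Throughout, well-definedness of $\F(M,g)$ (that $g\circ\gamma\in\F(M,F)$ for $\gamma\in\F(M,U)$) and openness of $\F(M,U)$ in $\F(M,E)$ are used as already available.

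First I would establish an analytic analogue of Lemma~\ref{F new}: for $U,W\in\mathcal{U}$ with $W$ relatively compact in $U$, an open set $V\subseteq E$ and a $\mathbb{K}$-analytic map $g\colon V\to F$, the local superposition operator
\[
\F(U/W,g)\colon\{\gamma\in\F(U,E)\colon\gamma(\overline{W})\subseteq V\}\to\F(W,F),\quad\gamma\mapsto g\circ\gamma|_W
\]
is $\mathbb{K}$-analytic. For $\mathbb{K}=\mathbb{C}$ this map is already $C^\infty$ by Lemma~\ref{F new}, and equation~(\ref{the-der-dir}) gives $d\F(U/W,g)=\F(U/W,dg)$; since $dg(y,\cdot)$ is $\mathbb{C}$-linear and the vector space operations of $\F(W,F)$ are pointwise, the differential $d\F(U/W,g)(\gamma,\cdot)$ is $\mathbb{C}$-linear, so the operator is $C^\infty$ with complex-linear first differential and hence $\mathbb{C}$-analytic, $\F(W,F)$ being integral complete. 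For $\mathbb{K}=\R$ I would extend $g$ to a $\mathbb{C}$-analytic map $\tilde g\colon\tilde V\to F_{\mathbb{C}}$ on an open neighborhood $\tilde V$ of $V$ in $E_{\mathbb{C}}$, identify $\F(W,E_{\mathbb{C}})$ with the complexification $\F(W,E)_{\mathbb{C}}$ (both being $\F(W,\R)^{2n}$ via the defining isomorphism~(\ref{2 eq iso})), and observe that $\F(U/W,\tilde g)$ is a $\mathbb{C}$-analytic extension of $\F(U/W,g)$ to the open neighborhood $\{\gamma\colon\gamma(\overline{W})\subseteq\tilde V\}$ of the real domain, which witnesses $\R$-analyticity.

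Next I would globalize. Fix charts $\phi_i\colon U_i\to V_i$ and $W_i\in\mathcal{U}$ relatively compact in $V_i$ with $M=\bigcup_{i=1}^k\phi_i^{-1}(W_i)$, and let $\Theta\colon\F(M,F)\to\prod_{i=1}^k\F(W_i,F)$ be the topological embedding with closed image from Lemma~\ref{lemmatopo}. Since $\Theta$ is a linear topological embedding onto a closed subspace, $\F(M,g)$ is $\mathbb{K}$-analytic if and only if $\Theta\circ\F(M,g)$ is, and the latter (being a map into a finite product) holds if and only if each component $\gamma\mapsto(g\circ\gamma)\circ\phi_i^{-1}|_{W_i}$ is $\mathbb{K}$-analytic. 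Using property (c) of Definition~\ref{pre goodcollection} I would choose $V_i'\in\mathcal{U}$ with $\overline{W_i}\subseteq V_i'$ and $V_i'$ relatively compact in $V_i$, so that, writing $(g\circ\gamma)\circ\phi_i^{-1}|_{W_i}=g\circ(\gamma\circ\phi_i^{-1})|_{W_i}$, each component factors as the continuous linear restriction $\gamma\mapsto\gamma\circ\phi_i^{-1}|_{V_i'}$ followed by the local operator $\F(V_i'/W_i,g)$; here $\gamma(M)\subseteq U$ forces $(\gamma\circ\phi_i^{-1})(\overline{W_i})\subseteq U$, so the local operator applies. Each component is thus a composite of $\mathbb{K}$-analytic maps, hence $\mathbb{K}$-analytic, and the conclusion follows.

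The main obstacle is the local analytic lemma, and within it the two framework inputs: the identity $d\F(U/W,g)=\F(U/W,dg)$ from Lemma~\ref{F new} combined with the standard passage from ``$C^\infty$ with complex-linear first differential'' to ``$\mathbb{C}$-analytic'' for integral complete targets, and, in the real case, the careful verification that the complexification of $\F(U,E)$ is $\F(U,E_{\mathbb{C}})$ with matching topology and that the domain condition $\gamma(\overline{W})\subseteq V$ complexifies to an open set whose real trace is the original domain. By contrast, the globalization step is essentially formal, relying only on permanence of $\mathbb{K}$-analyticity under composition, its componentwise character for finite products, and its reflection along the closed topological embedding~$\Theta$.
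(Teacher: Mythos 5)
Your proposal is correct, but note that the paper itself gives no proof of this lemma: it is recalled from \cite{FGL1} (Section 5), with the earlier remark that the proofs there carry over verbatim to the corners setting since cut-off functions exist. Your argument is essentially a reconstruction of that cited route --- localize via the chart embedding $\Theta$ of Lemma \ref{lemmatopo}, prove the local statement for $\mathbb{K}=\mathbb{C}$ by combining the smoothness and derivative formula (\ref{the-der-dir}) of Lemma \ref{F new} with the criterion that a $C^\infty$ map with complex-linear differentials into an integral (hence Mackey) complete space is $\mathbb{C}$-analytic, and handle $\mathbb{K}=\R$ by complexifying $g$ and identifying $\F(W,E_{\mathbb{C}})\cong\F(W,E)_{\mathbb{C}}$. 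The two nonformal inputs you flag are indeed the right ones, and both are part of the paper's declared toolkit: the analytic version of the closed-subspace Lemma \ref{FFclosed} (invoked by the paper, with reference to \cite{FGL1}, in the corollary following Theorem \ref{main1}) justifies reflecting analyticity along $\Theta$, and stability of $\mathbb{K}$-analyticity under composition is likewise assumed there. So your proof fills a gap the paper delegates to the literature, and does so in the intended way.
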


\section{Space of $\mathcal{F}$-sections}
Let $m, n\in \N$. We assume that $\mathcal{U}$ is a good collection of open subsets of $[0,\infty)^m$ and $\left(\F(U,\R)\right)_{U\in \mathcal{U}}$ is a family of locally convex spaces suitable for global analysis.\\
Let $M$ be an $m$-dimensional compact smooth manifold with corners and $N$ be an $n$-dimensional smooth manifold. For $\gamma \in \F (M,N)$ we define the set
\[ \Gamma_\F (\gamma) := \{ \sigma \in \F (M,TN) : \pi_{N} \circ \sigma = \gamma \} \]
and we endow it with the pointwise operations, making it a vector space. We make $\Gamma_\F (\gamma)$ a  Hausdorff locally convex space, using the initial topology with respect to the family of maps
\[ h_{\varphi,\phi}:\Gamma_{\F} (\gamma) \to \F(V_\varphi,\R^n) ,\quad \sigma \mapsto d\phi \circ \sigma \circ \varphi^{-1}|_W \]
where $\varphi:U_\varphi\to V_\varphi$ is a chart in the maximal $C^{\infty}$-atlas of $M$, with $W\in\mathcal{U}$ relatively compact in $V_\varphi$ and there exists a chart $\phi:U_\phi\to V_\phi$ of $N$ such that $\gamma(U_\varphi)\subseteq U_\phi$. These maps make sense because $\gamma(U_\varphi)\subseteq U_\phi$ implies $\sigma(U_\varphi)\subseteq TU_\phi$ for each $\sigma\in \gf$.

\begin{prop}\label{topembedding} Let $M$ be an $m$-dimensional compact smooth manifold with corners, $N$ be an $n$-dimensional smooth manifold and $\gamma\in \F(M,N)$. For $i\in\{1,...,k\}$, let $\varphi_i:U_i\to V_i$ be charts of $M$ such that there exists $W_i\in \mathcal{U}$ relatively compact in $V_i$ with $M=\cup_{i=1}^k \varphi_i^{-1}(W_i)$ and there exists a chart $\phi_i : U_{\phi_i}\to V_{\phi_i}$ of $N$ such that $\gamma\left(U_i\right)\subseteq U_{\phi_i}$. Then the map
\[ \Phi_\gamma:\Gamma_{\F} (\gamma) \to \prod_{i=1}^k \F(W_i,\R^n) ,\quad \sigma \mapsto \left( d\phi_i \circ \sigma \circ \varphi_i^{-1}|_{W_i} \right)_{i=1}^k \]
is a linear topological embedding with closed image given by the vector subspace of elements $(\tau_i)_{i=1}^n$ such that 
\[ \tau_{i}\circ \varphi_i(p) = d\phi_i \circ (T\phi_{j})^{-1} \Big(\phi_{j}\circ\gamma (p),\tau_j\circ \varphi_j(p)\Big)\]
for all $i,j\in\{1,...,k\}$ and $p\in \varphi_i^{-1}(W_i)\cap \varphi_j^{-1}(W_j)$.
\end{prop}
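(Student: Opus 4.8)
The plan is to treat $\Phi_\gamma$ as a linear map and verify, in turn, that it is continuous, injective, a topological embedding, and that its image is the stated closed subspace. Continuity is immediate: each component $\sigma\mapsto d\phi_i\circ\sigma\circ\varphi_i^{-1}|_{W_i}$ is one of the maps $h_{\varphi_i,\phi_i}$ defining the initial topology on $\Gamma_{\F}(\gamma)$, so $\Phi_\gamma$ is continuous into the product. Injectivity follows because the sets $\varphi_i^{-1}(W_i)$ cover $M$: if $\Phi_\gamma(\sigma)=\Phi_\gamma(\sigma')$ then $d\phi_i\circ\sigma=d\phi_i\circ\sigma'$ on $\varphi_i^{-1}(W_i)$, and since $\pi_{TN}\circ\sigma=\pi_{TN}\circ\sigma'=\gamma$ and each $T\phi_i$ is injective, $\sigma=\sigma'$ there, hence on all of $M$.

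The main work is to show that $\Phi_\gamma$ is a topological embedding, for which it suffices to prove that the inverse on $\mathrm{Im}(\Phi_\gamma)$ is continuous; equivalently, that every defining map $h_{\varphi,\phi}$ is continuous with respect to the topology pulled back by $\Phi_\gamma$. Fix an admissible triple $(\varphi,W,\phi)$. The pointwise change-of-chart computation gives, for $p\in\varphi^{-1}(W)\cap\varphi_i^{-1}(W_i)$ and $x=\varphi(p)$,
\[
(d\phi\circ\sigma\circ\varphi^{-1})(x)=d(\phi\circ\phi_i^{-1})\big(\phi_i(\gamma(\varphi^{-1}(x)))\big)\cdot(\tau_i\circ\psi_i)(x),
\]
where $\psi_i:=\varphi_i\circ\varphi^{-1}$ is the transition diffeomorphism on $M$ and $\tau_i$ is the $i$-th component of $\Phi_\gamma(\sigma)$. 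This exhibits $h_{\varphi,\phi}(\sigma)$, locally on the open set $A_i:=\varphi(U_\varphi\cap\varphi_i^{-1}(W_i))\cap W$, as the outcome of three continuous operations in $\tau_i$: first the pullback $\tau_i\mapsto\tau_i\circ\psi_i$ (Lemma \ref{pre Floc theta} / Axiom (PB)); then pairing with the fixed element $c_i:=\phi_i\circ\gamma\circ\varphi^{-1}$ to form the pair $(c_i,\tau_i\circ\psi_i)$, regarded as an $\R^{2n}$-valued function via the identification of Lemma \ref{pre FExE FExFE}; and finally the pushforward by the smooth map $\tilde F_i(y,v):=d(\phi\circ\phi_i^{-1})(y)\cdot v$, which is continuous by Lemma \ref{F new}. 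To pass from these local formulas to continuity of $h_{\varphi,\phi}$ on all of $W$, I would cover the compact set $\overline{W}$ by finitely many $A_i$, use properties (b)--(c) of the good collection to refine this to relatively compact sets $W^{(l)}\in\mathcal{U}$ with $\overline{W^{(l)}}\subseteq A_{i(l)}$ and $\overline{W}\subseteq\bigcup_l W^{(l)}$, and then glue the local representations with Lemma \ref{pre glue}. The delicate point here --- and the step I expect to be the real obstacle --- is precisely that the twisting factor $d(\phi\circ\phi_i^{-1})\circ c_i$ is only as regular as $\gamma$, so it cannot be treated as a smooth multiplier directly; absorbing it into a single pushforward of the pair $(c_i,\tau_i\circ\psi_i)$ by the genuinely smooth map $\tilde F_i$ is what makes the argument go through.

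Finally, to identify the image I would verify both inclusions. The same change-of-chart identity (with the roles of $i$ and $j$ as in the statement) shows that $\Phi_\gamma(\sigma)$ always satisfies the compatibility relations, so $\mathrm{Im}(\Phi_\gamma)$ is contained in the subspace $S$. Conversely, given $(\tau_i)\in S$ I would reconstruct $\sigma$ by setting $\sigma(p):=(T\phi_i)^{-1}\big(\phi_i(\gamma(p)),\tau_i(\varphi_i(p))\big)$ for $p\in\varphi_i^{-1}(W_i)$; the compatibility relations guarantee this is independent of $i$ on overlaps, so the pieces glue to a well-defined $\sigma:M\to TN$ with $\pi_{TN}\circ\sigma=\gamma$, whose local representatives $(\phi_i\circ\gamma\circ\varphi_i^{-1},\tau_i)$ lie in $\F$, whence $\sigma\in\Gamma_{\F}(\gamma)$ and $\Phi_\gamma(\sigma)=(\tau_i)$. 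Thus $\mathrm{Im}(\Phi_\gamma)=S$. Closedness of $S$ is then clear: each compatibility relation is the vanishing of the difference of two continuous maps into $\F(\cdot,\R^n)$ (composing the continuous operations above with the continuous inclusion into $BC(\cdot,\R^n)$ and point evaluations), so $S$ is an intersection of finitely many closed subspaces.
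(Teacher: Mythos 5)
Your proposal is correct and follows essentially the same route as the paper: continuity from the initial topology, continuity of the inverse via the local change-of-chart identity decomposed as pullback along the transition map, pairing with the fixed function $\phi_i\circ\gamma\circ\varphi^{-1}$, and pushforward by the smooth map $(y,v)\mapsto d(\phi\circ\phi_i^{-1})(y)\cdot v$ (the paper writes this as $\F(W_j,d\beta\circ T\phi_j^{-1})$), followed by gluing via Lemma \ref{pre glue}, and the same explicit reconstruction of $\sigma$ from a compatible tuple to identify the image. Your closing argument for closedness of $S$ via continuity of point evaluations is a welcome addition, since the paper asserts closedness without spelling it out (note only that each relation set is closed as an intersection over \emph{infinitely} many points $p$, with finitely many pairs $(i,j)$).
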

\begin{proof}
The map $\Phi_\gamma$ is continuous by definition of the topology on $\Gamma_\F(\gamma)$. We denote by $S$ the vector space of elements $(\tau_i)_{i=1}^n$ such that 
\[ \tau_{i}\circ \varphi_i(p) = d\phi_i \circ (T\phi_{j})^{-1} \Big(\phi_{j}\circ\gamma (p),\tau_j\circ \varphi_j(p)\Big)\]
for all $i,j\in\{1,..,k\}$ and $p\in \varphi_i^{-1}(W_i)\cap \varphi_j^{-1}(W_j)$. \\
Clearly $\text{Im}(\Phi_\gamma)\subseteq S$. Let $(\tau_i)_{i=1}^k \in S$, identifying the tangent bundle $TV$ with $V\times \R^n$ for any open subset $V\subseteq \R^n$, we define the function $\sigma:M\to TN$ by 
\[ \sigma(p)=T\phi_i^{-1} \left( \phi_i\circ \gamma(p) ,\tau_i \circ \varphi_i(p)\right),\quad \text{if }p\in \varphi_i^{-1} (W_i)\text{ with }i\in \{1,...,k\}.\] 
We will show that the function $\sigma$ is well defined. Let  $p\in \varphi_i^{-1}(W_i)\cap \varphi_j^{-1}(W_j)$, then
\begin{align*}
    T\phi_i^{-1} \left( \phi_i\circ \gamma(p) ,\tau_i \circ \varphi_i(p)\right) 
    &= T\phi_j^{-1}\circ T\phi_j \circ T\phi_i^{-1} \left( \phi_i\circ \gamma(p) ,\tau_i \circ \varphi_i(p)\right) \\
    &= T\phi_j^{-1}\left( \phi_j\circ \gamma(p), \tau_j\circ \varphi_j (p)\right)
\end{align*}
Hence $\sigma$ make sense. Moreover, we see that $\pi_{N}\circ \sigma = \gamma$. \\ 
For $\varphi_i|_{\varphi_i^{-1} (W_i)}: \varphi_i^{-1} (W_i) \to W_i$ and $T\phi_i : TU_{\phi_i}\to V_{\phi_i}\times \R^n$ we have
\begin{align*}
T\phi_i \circ \sigma \circ\varphi_i|_{\varphi_i^{-1} (W_i)} &=  
T\phi_i \circ \left( T\phi_i^{-1} \left( \phi_i\circ \gamma ,\tau_i \circ \varphi_i\right)\right) \circ\varphi_i|_{\varphi_i^{-1} (W_i)} \\
&=  \left( \phi_i\circ\gamma\circ \varphi_i|_{\varphi_i^{-1} (W_i)} , \tau_i \right).
\end{align*}
Since $W_i\in \mathcal{U}$ we have $\phi_i\circ \gamma\circ  \varphi_i|_{\varphi_i^{-1} (W_i)} \in \F(W_i,V_{\phi_i})$ and
\[T\phi_i \circ \sigma \circ \varphi_i|_{\varphi_i^{-1} (W_i)} \in \F(W_i,V_{\phi_i})\times \F(W_i,\R^n)\cong\F(W_i,V_{\phi_i}\times \R^n).\] 
Thus  $\sigma\in \Gamma_\F (\gamma)$. Evaluating we have
\begin{align*}
\Phi_\gamma(\sigma) &= \left( d\phi_i \circ \sigma \circ \varphi_i^{-1}|_{W_i} \right)_{i=1}^k \\
&= \left( d\phi_i \circ T\phi_i^{-1} \left( \phi_i\circ \gamma ,\tau_i\circ \varphi_i \right)\circ  \varphi_i^{-1}|_{W_i} \right)_{i=1}^k \\
&= \left( d\phi_i \circ T\phi_i^{-1} \left( \phi_i\circ \gamma\circ \varphi_i^{-1}|_{W_i} ,\tau_i \circ \varphi_i\circ  \varphi_i^{-1}|_{W_i} \right) \right)_{i=1}^k \\
&= \left( d\phi_i \left( \gamma\circ \varphi_i^{-1}|_{W_i} ,d\phi_i^{-1} \circ \tau_i\right) \right)_{i=1}^k \\
&= \left( \tau_i \right)_{i=1}^k 
\end{align*}
Hence $S\subseteq \text{Im}(\Phi_\gamma)$. The inverse of $\Phi_\gamma$ is given by $\Phi_\gamma^{-1} :\text{Im}(\Phi_\gamma)\to \Gamma_\F (\gamma)$ where
\[ \Phi_\gamma^{-1}\left((\tau_i)_{i=1}^k\right)(p)= T\phi_i^{-1} \left( \phi_i\circ \gamma(p) ,\tau_i \circ \varphi_i(p)\right),\quad \text{for }p\in \varphi_i^{-1}(W_i).\] 
Consider the arbitrary chart $\alpha:U_\alpha\to V_\alpha$ of $M$ with $W_\alpha\in\mathcal{U}$ relatively compact in $V_\alpha$ and the chart $\beta:U_\beta\to V_\beta$ of $N$ such that $\alpha(U_\alpha)\subseteq U_\beta$. We will show that
\[ h_{\alpha,\beta}\circ \Phi_\gamma^{-1} :\text{Im}(\Phi_\gamma)\to \F (W, \Rn),\quad \tau=(\tau_i)_{i=1}^k\mapsto d\beta\circ \sigma_\tau\circ \alpha^{-1}|_W \]
is continuous. Since $M=\cup_{i=1}^k \varphi_i^{-1}(W_i)$, we define the open set
\[ Q_j:=\alpha\left(U_\alpha\cap \varphi_j^{-1}(W_j)\right). \] 
Without loss of generality, we assume that there exists $r\in \{1,..,k\}$ such that $Q_j\neq \emptyset$ for each $j\in\{1,...,r\}$ and $Q_j=\emptyset$ for each $j\in\{r+1,..,k\}$.\\
Then, for $\tau \in \text{Im}(\Phi_\gamma)$ and $j\in\{1,...,r\}$ we have
\begin{align*}
d\beta \circ \sigma_\tau \circ \alpha^{-1}|_{Q_j} &= d\beta \circ T\phi_j^{-1} \left( \phi_j\circ \gamma ,\tau_j \circ \varphi_j\right) \circ \alpha^{-1}|_{Q_j} \\
&= \F( W_j,d\beta \circ T\phi_j^{-1})\circ \left(\phi_i\circ \gamma\circ \alpha^{-1}|_{Q_i}, \F(\varphi_j \circ \alpha^{-1}|_{Q_j}, \Rn) \circ  \tau_j\right).
\end{align*}
Hence $d\beta \circ \sigma_\tau \circ \alpha^{-1}|_{Q_j} \in \F_{\text{loc}} (Q_i,\Rn)$. This enables us to define the continuous map
\[ \Lambda: \text{Im}(\Phi_\gamma)\to \prod_{i=1}^r \F_{\text{loc}} (Q_i,\Rn). \quad \tau \mapsto \left(d\beta\circ \sigma_\tau \circ \alpha^{-1}|_{V_i}\right)_{i=1}^r\]
where the image set $\text{Im}(\Lambda)$ coincides with the subspace 
\[ \left\{ (\beta_1,...,\beta_r)\in \prod_{i=1}^r \F_{\text{loc}} (Q_i,\Rn): \left(\forall i,j\in\{1,...,r\}\right) \beta_i|_{Q_i\cap Q_j} = \beta_j|_{Q_i\cap Q_j} \right\}.\] 
For $(\beta_1,...,\beta_r)\in \prod_{i=1}^r \F_{\text{loc}} (Q_i,\Rn)$, we denote the gluing function
\[\beta(x):=\beta_j(x),\quad \text{if } x\in Q_i .\]
For each $W\in \mathcal{U}$ relatively compact in $Q_1\cup...\cup Q_r$ we have $\beta|_W\in\F(W,\Rn)$ and the map
\[ \text{glue}_W:\text{Im}(\Lambda)\to \F(W,\Rn),\quad (\beta_1,...,\beta_r)\mapsto \beta|_W \]
is continuous \cite[Lemma 4.1]{FGL1}. Therefore $h_{\varphi,\phi}\circ \Phi_\gamma^{-1}$ is continuous since
\[ h_{\varphi,\phi}\circ \Phi_\gamma^{-1} =  \text{glue}_W\circ \Lambda.\]
Hence $\Phi_\gamma^{-1}$ is continuous.
\end{proof}
\begin{re}
From now we consider the map $\Phi_{\gamma,P}$ as the homeomorphism 
\[\Phi_{\gamma}:\Gamma_\F (\gamma)\to \text{Im}(\Phi_{\gamma}).\]
\end{re}
\noindent

\begin{coro}
Let $\gamma\in \F(M,N)$. For $i\in\{1,...,k\}$, let $\varphi_i:U_i\to V_i$ be charts of $M$ such that there exists $W_i\in \mathcal{U}$ relatively compact in $V_i$ with $M=\cup_{i=1}^k \varphi_i^{-1}(W_i)$ and there exists a chart $\phi_i : U_{\phi_i}\to V_{\phi_i}$ of $N$ such that $\gamma\left(U_i\right)\subseteq U_{\phi_i}$. Then the space $\Gamma_\F(\gamma)$ is integral complete. Moreover:
\begin{itemize}
    \item[a)] If $\F(W_i,\Rn)$ is a Banach space for all $i\in\{1,..,k\}$, then $\Gamma_\F(\gamma)$ i a Banach space with norm $\lVert \cdot\lVert_\Gamma$ given by
    \[ \lVert \sigma \lVert_\Gamma := \sum_{i=1}^k \lVert \left(d\phi_i \circ \sigma \circ\varphi_i^{-1}|_{W_i} \right) \lVert_{\F(W_i,\Rn)},\quad \forall \sigma \in \Gamma_\F (\gamma). \]
    \item[b)]  If $\F(W_i,\Rn)$ is a Hilbert space for all $i\in\{1,..,k\}$, then $\Gamma_\F(\gamma)$ is a Hilbert space with inner product $\langle \cdot,\cdot\rangle_\Gamma$ given by
    \[ \langle \sigma,\tau \rangle_\Gamma:= \sum_{i=1}^k \left\langle  \left(d\phi_i \circ \sigma \circ \varphi_i^{-1}|_{W_i}\right), \left(d\phi_i \circ \tau \circ \varphi_i^{-1}|_{W_i}\right)\right\rangle_{\F(W_i,\Rn)},\quad \forall \sigma,\tau \in \Gamma_\F (\gamma). \]
\end{itemize}
\end{coro}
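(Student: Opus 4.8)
The plan is to deduce all three assertions from Proposition~\ref{topembedding}, which realizes $\Phi_\gamma$ as a linear topological embedding of $\Gamma_\F(\gamma)$ onto a \emph{closed} subspace of the finite product $P:=\prod_{i=1}^k \F(W_i,\R^n)$. The strategy is uniform: establish the relevant completeness or structural property for each factor $\F(W_i,\R^n)$, observe that this property is stable under finite products and under passage to closed subspaces, and finally transport it back to $\Gamma_\F(\gamma)$ along the isomorphism of topological vector spaces $\Phi_\gamma\colon \Gamma_\F(\gamma)\to \text{Im}(\Phi_\gamma)$.

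First I would record that each factor $\F(W_i,\R^n)$ is integral complete. By Definition~\ref{pre familysuitable} the space $\F(W_i,\R)$ is integral complete, and the isomorphism of topological vector spaces from~(\ref{2 eq iso}) identifies $\F(W_i,\R^n)$ with $\F(W_i,\R)^n$, so $\F(W_i,\R^n)$ is integral complete as a finite product of integral complete spaces. Since integral completeness passes to finite products, $P$ is integral complete; since it passes to closed subspaces (a weak integral computed in $P$ is annihilated by every continuous functional that vanishes on the subspace, hence lies in the closed subspace by Hahn--Banach), the closed image $\text{Im}(\Phi_\gamma)$ is integral complete. As $\Phi_\gamma$ is an isomorphism of topological vector spaces onto this image, $\Gamma_\F(\gamma)$ is integral complete, which proves the first assertion.

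For part~a), if every $\F(W_i,\R^n)$ is a Banach space, then $P$ is a Banach space for the norm $\|(\tau_i)_i\|:=\sum_{i=1}^k\|\tau_i\|_{\F(W_i,\R^n)}$, and its closed subspace $\text{Im}(\Phi_\gamma)$ is again a Banach space. Pulling this norm back along $\Phi_\gamma$ gives exactly $\|\sigma\|_\Gamma=\sum_{i=1}^k\|d\phi_i\circ\sigma\circ\varphi_i^{-1}|_{W_i}\|_{\F(W_i,\R^n)}$; it is positive definite because $\Phi_\gamma$ is injective (if $\|\sigma\|_\Gamma=0$ then $\Phi_\gamma(\sigma)=0$, hence $\sigma=0$), and, $\Phi_\gamma$ being a topological embedding, $\|\cdot\|_\Gamma$ induces the given topology of $\Gamma_\F(\gamma)$; completeness is then immediate from closedness of the image. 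Part~b) is entirely parallel: the product $P$ carries the inner product $\langle(\sigma_i)_i,(\tau_i)_i\rangle:=\sum_{i=1}^k\langle\sigma_i,\tau_i\rangle_{\F(W_i,\R^n)}$ making it a Hilbert space, its closed subspace $\text{Im}(\Phi_\gamma)$ is a Hilbert space, and transporting this inner product through $\Phi_\gamma$ yields precisely the stated formula for $\langle\cdot,\cdot\rangle_\Gamma$.

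The argument is essentially bookkeeping, and the only genuine inputs are the structural facts that integral completeness, the Banach property and the Hilbert property are each stable under finite products and under passage to closed subspaces, together with the topological embedding statement of Proposition~\ref{topembedding} — the latter being exactly what guarantees that the pulled-back norm and inner product describe the \emph{given} topology of $\Gamma_\F(\gamma)$ rather than some a priori finer one. I do not anticipate a serious obstacle: finiteness of the index set $k$ ensures the defining sums are finite, and no completeness argument beyond ``a closed subspace of a complete space is complete'' is required.
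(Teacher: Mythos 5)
Your proposal is correct and is exactly the argument the paper intends: the corollary is stated without proof precisely because it follows from Proposition~\ref{topembedding}, by transporting integral completeness (resp.\ the Banach or Hilbert structure) from the finite product $\prod_{i=1}^k \F(W_i,\R^n)$ to the closed subspace $\mathrm{Im}(\Phi_\gamma)$ and then back along the topological embedding $\Phi_\gamma$. Your Hahn--Banach justification that a closed subspace inherits integral completeness, and the pulled-back sum norm and sum inner product, match the stated formulas, so there is nothing to add.
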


\begin{prop}\label{Gamma fon}
Let $\mathcal{U}$ be a good collection of open subsets of $[0,\infty)^m$ and $(\F(U,\R))_{U\in \mathcal{U}}$ be a family of locally convex spaces suitable for global analysis. Let $M$ be an $m$-dimensional compact smooth manifold with corners, $N_1$ and $N_2$ be finite-dimensional smooth manifolds and $\gamma\in \F(M,N_1)$. If $f:N_1\to N_2$ is a smooth function, then $Tf\circ \sigma \in \Gamma_{\F}(f\circ \gamma)$ for each $\sigma \in \Gamma_{\F}(\gamma)$. Moreover, the map
\[ \widetilde{f}:\Gamma_{\F}(\gamma)\to  \Gamma_{\F}(f\circ \gamma),\quad \sigma\mapsto Tf\circ \sigma\]
is continuous linear.
\end{prop}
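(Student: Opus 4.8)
The plan is to verify the three assertions in turn: that $Tf\circ\sigma$ lands in $\Gamma_\F(f\circ\gamma)$, that $\tilde f$ is linear, and that $\tilde f$ is continuous; the last point carries essentially all of the work.

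\emph{Well-definedness and linearity.} Since $f$ is smooth, its tangent map $Tf\colon TN_1\to TN_2$ is a smooth map between finite-dimensional smooth manifolds without boundary. Hence, for $\sigma\in\F(M,TN_1)$, Lemma~\ref{pre FM Acts} applied to $Tf$ yields $Tf\circ\sigma\in\F(M,TN_2)$. By naturality of the bundle projection, $\pi_{TN_2}\circ Tf=f\circ\pi_{TN_1}$, so $\pi_{TN_2}\circ(Tf\circ\sigma)=f\circ(\pi_{TN_1}\circ\sigma)=f\circ\gamma$, and therefore $Tf\circ\sigma\in\Gamma_\F(f\circ\gamma)$. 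For linearity, recall that the vector space operations on $\Gamma_\F(\gamma)$ and $\Gamma_\F(f\circ\gamma)$ are pointwise, and that $Tf$ restricts on each fibre $T_{\gamma(p)}N_1$ to the linear map $T_{\gamma(p)}f$; thus $\tilde f(a\sigma+b\tau)(p)=T_{\gamma(p)}f\bigl(a\sigma(p)+b\tau(p)\bigr)=a\,\tilde f(\sigma)(p)+b\,\tilde f(\tau)(p)$ for all $p\in M$, giving linearity.

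\emph{Reduction of continuity to coordinates.} Using compactness of $M$, I would first choose charts $\varphi_i\colon U_i\to V_i$ of $M$ ($i\in\{1,\dots,k\}$) together with $W_i\in\mathcal U$ relatively compact in $V_i$ and $M=\bigcup_i\varphi_i^{-1}(W_i)$, refined so that there are charts $\phi_i$ of $N_1$ with $\gamma(U_i)\subseteq U_{\phi_i}$ and charts $\psi_i$ of $N_2$ with $f(\gamma(U_i))\subseteq U_{\psi_i}$ (the latter is possible because $f\circ\gamma\in\F(M,N_2)$). These data give topological embeddings $\Phi_\gamma$ and $\Phi_{f\circ\gamma}$ as in Proposition~\ref{topembedding}. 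Since $\Phi_{f\circ\gamma}$ is a topological embedding, $\tilde f$ is continuous as soon as every component $\mathrm{pr}_i\circ\Phi_{f\circ\gamma}\circ\tilde f$ is, i.e. the map
\[
\Xi_i\colon\Gamma_\F(\gamma)\to\F(W_i,\R^n),\qquad \sigma\mapsto d\psi_i\circ(Tf\circ\sigma)\circ\varphi_i^{-1}|_{W_i}.
\]
Writing $g_i:=\phi_i\circ\gamma\circ\varphi_i^{-1}$, $f_i:=\psi_i\circ f\circ\phi_i^{-1}$ and reading $Tf$ in the charts $\phi_i,\psi_i$ (so that $Tf$ becomes $(x,v)\mapsto(f_i(x),df_i(x)v)$), a direct computation shows $\Xi_i(\sigma)(\xi)=df_i(g_i(\xi))\,\tau_i(\xi)$, where $\tau_i:=d\phi_i\circ\sigma\circ\varphi_i^{-1}|_{W_i}$.

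\emph{Continuity via the superposition lemma.} The function $g_i$ is fixed while $\tau_i$ varies, so I would realise $\Xi_i$ as a superposition operator applied to the pair $(g_i,\tau_i)$. Choose $W_i'\in\mathcal U$ with $\overline{W_i}\subseteq W_i'$ and $W_i'$ relatively compact in $V_i$ (good collection, axiom~c)). Then $\sigma\mapsto\tau_i':=d\phi_i\circ\sigma\circ\varphi_i^{-1}|_{W_i'}$ is one of the defining continuous maps of the topology on $\Gamma_\F(\gamma)$, the function $g_i':=\phi_i\circ\gamma\circ\varphi_i^{-1}|_{W_i'}$ lies in $\F(W_i',\R^n)$ with $g_i'(\overline{W_i})\subseteq V_{f_i}$ for the open domain $V_{f_i}$ of $f_i$, and $F_i\colon V_{f_i}\times\R^n\to\R^n$, $F_i(x,v):=df_i(x)v$, is smooth. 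Since $\Xi_i(\sigma)=F_i\circ(g_i',\tau_i')|_{W_i}$ and $(g_i',\tau_i')(\overline{W_i})\subseteq V_{f_i}\times\R^n$ for every $\tau_i'$, the affine map $\tau_i'\mapsto(g_i',\tau_i')$ is continuous into the domain of $\F(W_i'/W_i,F_i)$, and Lemma~\ref{F new} (with $U=W_i'$, $W=W_i$, $E=\R^{2n}$, $V=V_{f_i}\times\R^n$) shows $\delta\mapsto F_i\circ\delta|_{W_i}$ is smooth, hence continuous. Composing these continuous maps proves $\Xi_i$ continuous, and therefore $\tilde f$ continuous. The main obstacle is exactly this last reduction: the superposition lemmas apply only to restriction from a strictly larger relatively compact domain, which forces the intermediate set $W_i'$ and the verification that the fixed base curve $g_i'$ stays inside the coordinate domain of $f_i$ (hence the cover refinement); once the pair $(g_i',\tau_i')$ is fed into Lemma~\ref{F new}, the fact that only $\tau_i'$ varies causes no further trouble.
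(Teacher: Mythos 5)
Your proof is correct and takes essentially the same route as the paper's: both localize the operator through the topological embeddings of Proposition \ref{topembedding}, write the chart-level map as the superposition of the fixed base function $g_i$ and the varying fibre part $\tau_i$ with the smooth map $(x,v)\mapsto df_i(x)v$, and handle restriction through an intermediate relatively compact set $W_i'$. The only cosmetic differences are that the paper invokes Lemma \ref{pre F Acts} after assuming the $N_1$-chart images are all of $\R^{n_1}$ (so the superposed map is globally defined), where you use Lemma \ref{F new} on its natural open domain, and that the paper explicitly verifies $G(\mathrm{Im}\,\Phi_\gamma)\subseteq \mathrm{Im}\,\Phi_{f\circ\gamma}$ to factor $\widetilde{f}=\Phi_{f\circ\gamma}^{-1}\circ G\circ \Phi_\gamma$, whereas you bypass this by checking continuity componentwise via the embedding property.
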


\begin{proof}
Since $f$ and $Tf$ are smooth, by Lemma \ref{pre F Acts} we have $f\circ \gamma \in \F(M,N_2)$ and 
\[ Tf\circ \sigma \in \F(M,TN_2),\quad \text{ for all }\sigma\in \Gamma_\F(\gamma).\] 
Since $\sigma(t)\in T_{\gamma (t)}N_1$ for each $t\in [a,b]$, we have $T_{\gamma(t)}f \circ \sigma(t)\in T_{f\circ \gamma(t)} N_2$, thus 
\[\pi_{TN_2}\circ (Tf\circ \sigma) = f\circ \gamma\]
and $\widetilde{f}$ is well defined.\\ 
Let $\gamma\in \F(M,N_1)$. For $i\in\{1,...,k\}$, let $\varphi_{M,i}:U_{M,i}\to V_{M,i}$ be charts of $M$ such that there exists $W_{M,i}\in \mathcal{U}$ relatively compact in $V_{M,i}$ with $M=\cup_{i=1}^k \varphi_{M,i}^{-1}(W_{M,i})$ and there exist chart $\phi_{1,i} : U_{\phi_{1,i}}\to V_{\phi_{1,i}}$ and $\phi_{2,i} : U_{\phi_{2,i}}\to V_{\phi_{2,i}}$ of $N_1$ and $N_2$ respectively, such that $\gamma\left(U_{M,i}\right)\subseteq U_{\phi_{1,i}}$ and 
$f\left(U_{\phi_{1,i}}\right)\subseteq U_{\phi_{2,i}}$. We may assume that $V_{\phi_{1,i}}=\R^{n_1}$ and $V_{\phi_{2,i}}=\R^{n_2}$.\\ 
Let $\gamma_i:=\phi_{1,i}\circ \gamma|_{W_{M,i}}$ and $W_{M,i}^\prime \in \mathcal{U}$ be relatively compact in $U_{M,i}$, containing the closure of $W_{M,i}$. For $i\in\{1,...,k\}$ we define the smooth map
\[ f_i:=d\phi_{2,i} \circ Tf \circ T\phi_{1,i}^{-1}: TV_{\psi_{1,i}}\subseteq \R^{2n_1} \to \R^{n_2} \]
and
\[ G: \prod_{i=1}^k \F(W_{M,i}^\prime,\R^{2n_1}) \to \prod_{i=1}^k \F(W_{M,i},\R^{n_2}), 
\quad (\xi_i)_{i=1}^k\mapsto \left(f_i \circ (\gamma_i,\xi_i|_{W_{M,i}})\right)_{i=1}^k \]
which is continuous by Lemma \ref{pre F Acts}. We consider the linear topological embeddings
\[ \Phi_\gamma:\Gamma_{\F} (\gamma) \to \prod_{i=1}^k \F(W_{M,i},\R^{n_1}) ,\quad \sigma \mapsto \left( d\phi_{1,i} \circ \sigma \circ \varphi_{M,i}^{-1}|_{W_{M,i}} \right)_{i=1}^k \]
and 
\[ \Phi_{f\circ\gamma}:\Gamma_{\F} (f\circ\gamma) \to \prod_{i=1}^k \F(W_{M,i},\R^{n_2}) ,\quad \tau \mapsto \left( d\phi_{i,2} \circ \tau \circ \varphi_{M,i}^{-1}|_{W_{M,i}} \right)_{i=1}^k. \]
Then
\[ G(\text{Im}(\Phi_\gamma))\subseteq \text{Im}(\Phi_{f\circ \gamma}).\]
Indeed, consider $\sigma\in \gf$. Then for each $i\in\{1,...,k\}$
\[ \tau_i:=f_i(T\phi_{1,i} \circ \sigma \circ \varphi_{M,i}^{-1}|_{W_{M,i}}) = d\phi_{2,i} \circ Tf \circ \sigma \circ \varphi_{M,i}^{-1}|_{W_{M,i}}.  \]
And for all $i,j\in\{1,..,k\}$ and $p\in \varphi_{M,i}^{-1}(W_{M,i})\cap \varphi_{M,j}^{-1}(W_{M,j})$, we have
\begin{align*}
\tau_i \circ \varphi_{M,i} (p) &= d\phi_{2,i} \circ Tf\circ \sigma \circ \varphi_{M,i}^{-1}|_{W_{M,i}}\circ \varphi_{M,i}(p) \\
&= d\phi_{2,i} \circ Tf\circ \sigma (p) \\
&=  d\phi_{2,i}  \Big(f\circ \gamma (p),df \circ \sigma (p)\Big)\\
&= d\phi_{2,i} \circ (T\phi_{2,j})^{-1} \Big(\phi_{2,j}\circ f\circ\gamma (p),d\phi_{2,j} \circ df \circ \sigma\circ \varphi_{M,j}^{-1}|_{W_{M,j}}\circ \varphi_{M,j}(p) \Big)\\ 
&= d\phi_{2,i} \circ (T\phi_{2,j})^{-1} \Big(\phi_{2,j}\circ f \circ\gamma (p),\tau_j\circ \varphi_{M,j}(p)\Big) 
\end{align*}
Hence
\[ \left(f_i \circ \left(d\phi_{1,i}\circ \sigma \circ \varphi_{M,i}^{-1}|_{W_{M,i}}\right)\right)_{i=1}^k \in \text{Im}(\Phi_{f\circ \gamma}).\]
In consequence
\[ \widetilde{f} = \Phi_{f\circ\gamma}^{-1} \circ G \circ \Phi_{\gamma}. \]
Thus $\widetilde{f}$ is continuous and the linearity is clear.
\end{proof}

\begin{re}
The topology of $\Gamma_\F (\gamma)$ does not depend on the chosen family of charts. Indeed, since the identity map $\text{id}_N:N\to N$ is smooth, by previous proposition the map
\[ \widetilde{\text{id}_N}:\Gamma_\F (\gamma)\to \Gamma_\F (\text{id}_N\circ \gamma), \quad \sigma\mapsto T\text{id}_M \circ \sigma \]
is smooth regardless of chosen family of charts. Moreover, this map coincides with the identity map $\text{id}_{\Gamma}:\Gamma_\F (\gamma)\to\Gamma_\F (\gamma)$, $\sigma\mapsto\sigma$.
\end{re}

\begin{re}\label{Jgamma} 
Let $\gamma\in BC(M,N)$, we define the space of continuous sections 
\[ \Gamma_C (\gamma)=\{\sigma\in BC(M,TN): \pi_{N}\circ \sigma=\gamma\}\] 
endowed with the compact-open topology. For each $i\in\{1,...,k\}$ the inclusion map
\[ J_i:\F (W_i,\Rn)\to BC( W_i,\Rn),\quad \tau_i\mapsto \tau_i \]
is continuous, whence the inclusion map $ J:\Gamma_\F(\gamma)\to \Gamma_C (\gamma)$, $\sigma\mapsto \sigma$ is continuous.\\
This implies that the set
\[ \mathcal{V}=\{\sigma\in \Gamma_\F (\gamma) : \sigma(M)\subseteq V\} \]
is open in $\Gamma_\F (\gamma)$ for each open set $V\subseteq TN$ .
\end{re}

\begin{prop}
Let $\mathcal{U}$ be a good collection of open subsets of $[0,\infty)^m$ and $(\F(U,\R))_{U\in \mathcal{U}}$ be a family of locally convex spaces suitable for global analysis. Let $M$ be a $m$-dimensional compact smooth manifold with corners, $N_1$, $N_2$ be smooth manifolds $n_1$-dimensional and $n_2$-dimensional respectively and $\pi_j:N_1\times N_2\to N_j$ be the $j$-projection for $j\in \{1,2\}$. If $\gamma_1\in \F(M,N_1)$ and $\gamma_2\in \F(M,N_2)$  then the map
\[ \mathcal{P}:\Gamma_{\F}(\gamma_1\times \gamma_2)\to \Gamma_{\F}(\gamma_1)\times \Gamma_{\F}(\gamma_1),\quad \sigma \mapsto (T\pi_1,T\pi_2)(\sigma) \]
is a linear homeomorphism.
\end{prop}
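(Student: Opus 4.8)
The plan is to exhibit $\mathcal{P}$ as a morphism between the three section spaces $\Gamma_\F(\gamma_1\times\gamma_2)$, $\Gamma_\F(\gamma_1)$ and $\Gamma_\F(\gamma_2)$ that, after passing to the local models furnished by Proposition~\ref{topembedding}, becomes a canonical reindexing isomorphism between products of $\F$-spaces. (The codomain should of course read $\Gamma_\F(\gamma_1)\times\Gamma_\F(\gamma_2)$.) First I would record well-definedness and continuity: since $\pi_j\circ(\gamma_1\times\gamma_2)=\gamma_j$ and each projection $\pi_j\colon N_1\times N_2\to N_j$ is smooth, Proposition~\ref{Gamma fon} shows that $\widetilde{\pi_j}\colon\Gamma_\F(\gamma_1\times\gamma_2)\to\Gamma_\F(\gamma_j)$, $\sigma\mapsto T\pi_j\circ\sigma$, is continuous and linear; hence $\mathcal{P}=(\widetilde{\pi_1},\widetilde{\pi_2})$ is a continuous linear map into the product. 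Throughout I would use the canonical diffeomorphism $(T\pi_1,T\pi_2)\colon T(N_1\times N_2)\to TN_1\times TN_2$, under which a section over $\gamma_1\times\gamma_2$ is identified with a pair of sections over $\gamma_1$ and $\gamma_2$.

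Next I would fix a single family of charts $\varphi_i\colon U_i\to V_i$ of $M$ with relatively compact $W_i\in\mathcal{U}$ and $M=\bigcup_i\varphi_i^{-1}(W_i)$, together with charts $\phi_{1,i}$ of $N_1$ and $\phi_{2,i}$ of $N_2$ such that $\gamma_1(U_i)\subseteq U_{\phi_{1,i}}$ and $\gamma_2(U_i)\subseteq U_{\phi_{2,i}}$; then $\phi_{1,i}\times\phi_{2,i}$ is a chart of $N_1\times N_2$ adapted to $\gamma_1\times\gamma_2$. This produces the three embeddings $\Phi_{\gamma_1}$, $\Phi_{\gamma_2}$, $\Phi_{\gamma_1\times\gamma_2}$ of Proposition~\ref{topembedding}. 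The key computation is the identity
\[
d(\phi_{1,i}\times\phi_{2,i})\circ\sigma=\big(d\phi_{1,i}\circ T\pi_1\circ\sigma,\ d\phi_{2,i}\circ T\pi_2\circ\sigma\big),
\]
which follows from $T(\phi_{1,i}\times\phi_{2,i})=T\phi_{1,i}\times T\phi_{2,i}$ under the identification above. Composing with $\varphi_i^{-1}|_{W_i}$ and recognizing $T\pi_j\circ\sigma=\widetilde{\pi_j}(\sigma)$, this shows that, via the canonical topological isomorphism
\[
\iota\colon\Big(\prod_i\F(W_i,\R^{n_1})\Big)\times\Big(\prod_i\F(W_i,\R^{n_2})\Big)\;\xrightarrow{\ \cong\ }\;\prod_i\F(W_i,\R^{n_1+n_2})
\]
(a componentwise application of the defining isomorphism~(\ref{2 eq iso}), since $\F(W_i,\R^{n_1+n_2})\cong\F(W_i,\R)^{n_1+n_2}\cong\F(W_i,\R^{n_1})\times\F(W_i,\R^{n_2})$), the diagram
\[
\Phi_{\gamma_1\times\gamma_2}=\iota\circ(\Phi_{\gamma_1}\times\Phi_{\gamma_2})\circ\mathcal{P}
\]
commutes.

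From this commuting diagram the conclusion follows formally: injectivity of $\mathcal{P}$ is immediate from injectivity of $\Phi_{\gamma_1\times\gamma_2}$, and for the homeomorphism claim it suffices to show that $\iota$ carries $\text{Im}(\Phi_{\gamma_1})\times\text{Im}(\Phi_{\gamma_2})$ onto $\text{Im}(\Phi_{\gamma_1\times\gamma_2})$; granting this, $\mathcal{P}$ is a composite of the homeomorphisms $\Phi_{\gamma_1}^{-1}\times\Phi_{\gamma_2}^{-1}$, $\iota^{-1}$, and $\Phi_{\gamma_1\times\gamma_2}$ restricted to their images, hence a linear homeomorphism. The main obstacle is precisely this matching of images: I must check that the cocycle conditions cutting out $\text{Im}(\Phi_{\gamma_1\times\gamma_2})$ inside $\prod_i\F(W_i,\R^{n_1+n_2})$ are exactly the product of the two conditions cutting out $\text{Im}(\Phi_{\gamma_1})$ and $\text{Im}(\Phi_{\gamma_2})$. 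This is a direct unravelling: since the transition maps for $N_1\times N_2$ split as $(\phi_{1,i}\times\phi_{2,i})\circ(\phi_{1,j}\times\phi_{2,j})^{-1}=(\phi_{1,i}\circ\phi_{1,j}^{-1})\times(\phi_{2,i}\circ\phi_{2,j}^{-1})$, the expression $d(\phi_{1,i}\times\phi_{2,i})\circ(T(\phi_{1,j}\times\phi_{2,j}))^{-1}$ evaluated on $(\phi_{1,j}\times\phi_{2,j})\circ(\gamma_1\times\gamma_2)(p)$ decouples into the two separate transition expressions, so the compatibility condition of Proposition~\ref{topembedding} for $\gamma_1\times\gamma_2$ is equivalent to the conjunction of those for $\gamma_1$ and $\gamma_2$. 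Alternatively, one may bypass this bookkeeping by constructing the inverse directly: given $(\sigma_1,\sigma_2)$, set $\sigma:=(T\pi_1,T\pi_2)^{-1}\circ(\sigma_1,\sigma_2)$, check $\pi_{T(N_1\times N_2)}\circ\sigma=\gamma_1\times\gamma_2$ pointwise, and deduce $\sigma\in\F(M,T(N_1\times N_2))$ from Lemma~\ref{pre FM Acts} together with the product identification, continuity of the assignment $(\sigma_1,\sigma_2)\mapsto\sigma$ again following from the local models.
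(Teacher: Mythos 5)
Your proof is correct and takes essentially the same route as the paper: continuity and linearity of $\mathcal{P}$ via Proposition \ref{Gamma fon}, then reduction to the local models of Proposition \ref{topembedding} through the canonical isomorphism $\prod_i \F(W_i,\R^{n_1})\times \prod_i \F(W_i,\R^{n_2}) \cong \prod_i \F(W_i,\R^{n_1}\times\R^{n_2})$, which the paper packages as an explicit inverse $\Theta = \Phi_{\gamma_1\times\gamma_2}^{-1}\circ\alpha\circ(\Phi_{\gamma_1}\times\Phi_{\gamma_2})$ satisfying $\mathcal{P}\circ\Theta = \mathrm{id}$. If anything, you are more careful than the paper on the one point it leaves implicit, namely that the canonical isomorphism carries $\text{Im}(\Phi_{\gamma_1})\times\text{Im}(\Phi_{\gamma_2})$ into $\text{Im}(\Phi_{\gamma_1\times\gamma_2})$ (without which $\Phi_{\gamma_1\times\gamma_2}^{-1}$ cannot be applied), and you rightly fix a single family of $M$-charts adapted to both $\gamma_1$ and $\gamma_2$ rather than two separate families.
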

\begin{proof}
Let $f:=(\pi_1,\pi_2)$, by Proposition \ref{Gamma fon} the map $\mathcal{P}$ is continuous and clearly linear. For $j\in\{1,2\}$ and $i\in\{1,...,k\}$, let $\varphi_{j,i}:U_{j,i}\to V_{j,i}$ be charts of $M$ such that there exists $W_{j,i}\in \mathcal{U}$ relatively compact in $V_{j,i}$ with $M=\cup_{i=1}^k \varphi_{j,i}^{-1}(W_{j,i})$ and there exists a chart $\phi_{j,i} : U_{\phi_{j,i}}\to V_{\phi_{j,i}}$ such that $\gamma_j\left(U_{j,i}\right)\subseteq U_{\phi_{j,i}}$. Then we have the homeomorphisms 
\[\Phi_{\gamma_j}:\Gamma_\F (\gamma_j)\to \text{Im}(\Phi_{\gamma_j}).\]
We consider the isomorphism of topological vector spaces
\[ \alpha :\prod_{i=1}^k\F(W_{j,i},\R^{n_1})\times \F(W_{j,i},\R^{n_2}) \to \prod_{i=1}^k \F(W_{j,i},\R^{n_1}\times \R^{n_2}), \quad (\xi_1,\xi_2)\mapsto \alpha(\xi_1,\xi_2). \]
If $\Phi_\gamma$ denotes the homeomorphism for $\gamma:=(\gamma_1,\gamma_2)\in \F(M,N_1\times N_2)$ as Proposition \ref{topembedding}, then
\[ \Theta:=\Phi_\gamma^{-1}\circ \alpha \circ \left(\Phi_{\gamma_1}\times \Phi_{\gamma_2}\right) \]
is continuous linear and for each $\sigma\in \Gamma_\F(\gamma_1)$ and $\tau\in \Gamma_\F(\gamma_2)$ we have
\[ \mathcal{P}\left(\Theta(\sigma,\tau)\right)=(\sigma,\tau).\]
Hence $\mathcal{P}$ is surjective and thus bijective, with $\mathcal{P}^{-1} = \Theta$ a continuous map.
\end{proof}

\begin{prop}\label{Fpropnog}
Let $\mathcal{U}$ be a good collection of open subsets of $[0,\infty)^m$ and $(\F(U,\R))_{U\in \mathcal{U}}$ be a family of locally convex spaces suitable for global analysis. Let $M_1$ and $M_2$ be compact smooth manifolds with corners and $N$ be a smooth manifold. If $\Theta:M_1\to M_2$ is a smooth diffeomorphism, then $\gamma \circ \Theta \in \F(M_1,N)$ for each $\gamma\in\F(M_2,N)$. Moreover, the map
\[ L_\Theta:\Gamma_\F (\gamma) \to \Gamma_{\F} (\gamma\circ \Theta), \quad \sigma \mapsto \sigma\circ \Theta\]
is linear and continuous.
\end{prop}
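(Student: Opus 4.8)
The plan is to exploit throughout that a chart of $M_2$ can be transported to a chart of $M_1$ by precomposing with the diffeomorphism $\Theta$, and that under this transport the local representatives of $\gamma$ and of its sections are left unchanged; continuity will then follow by comparing the two embeddings of Proposition \ref{topembedding}.

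First I would establish that $\gamma\circ\Theta\in\F(M_1,N)$. Fix $p\in M_1$ and put $q:=\Theta(p)$. Since $\gamma\in\F(M_2,N)$, there are a chart $\phi_{M_2}\colon U_{M_2}\to V_{M_2}$ of $M_2$ with $V_{M_2}\in\mathcal{U}$ and a chart $\phi_N\colon U_N\to V_N$ of $N$ such that $q\in U_{M_2}$, $\gamma(U_{M_2})\subseteq U_N$ and $\phi_N\circ\gamma\circ\phi_{M_2}^{-1}\in\F(V_{M_2},\R^n)$. I would then take $\phi_{M_1}:=\phi_{M_2}\circ\Theta$ on $U_{M_1}:=\Theta^{-1}(U_{M_2})$, which is a chart of $M_1$ because $\Theta$ is a smooth diffeomorphism, with $p\in U_{M_1}$, $\phi_{M_1}(U_{M_1})=V_{M_2}\in\mathcal{U}$ and $(\gamma\circ\Theta)(U_{M_1})=\gamma(U_{M_2})\subseteq U_N$. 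The cancellation
\[
\phi_N\circ(\gamma\circ\Theta)\circ\phi_{M_1}^{-1}=\phi_N\circ\gamma\circ\Theta\circ\Theta^{-1}\circ\phi_{M_2}^{-1}=\phi_N\circ\gamma\circ\phi_{M_2}^{-1}
\]
shows the local representative lies in $\F(V_{M_2},\R^n)$, so $\gamma\circ\Theta\in\F(M_1,N)$ by definition. Applying the same membership statement to the boundaryless manifold $TN$ in place of $N$, every $\sigma\in\Gamma_\F(\gamma)\subseteq\F(M_2,TN)$ gives $\sigma\circ\Theta\in\F(M_1,TN)$, and $\pi_{TN}\circ(\sigma\circ\Theta)=(\pi_{TN}\circ\sigma)\circ\Theta=\gamma\circ\Theta$, so $\sigma\circ\Theta\in\Gamma_\F(\gamma\circ\Theta)$; thus $L_\Theta$ is well defined, and its linearity is immediate from the pointwise vector space operations.

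For continuity I would reduce $L_\Theta$ to the embeddings of Proposition \ref{topembedding}. Choose charts $\varphi_i\colon U_i\to V_i$ of $M_2$ with $W_i\in\mathcal{U}$ relatively compact in $V_i$, $M_2=\bigcup_{i=1}^k\varphi_i^{-1}(W_i)$, and charts $\phi_i$ of $N$ with $\gamma(U_i)\subseteq U_{\phi_i}$, giving the topological embedding $\Phi_\gamma$. Setting $\tilde\varphi_i:=\varphi_i\circ\Theta$ yields charts of $M_1$ with $M_1=\bigcup_{i=1}^k\tilde\varphi_i^{-1}(W_i)$ and $(\gamma\circ\Theta)(\Theta^{-1}(U_i))=\gamma(U_i)\subseteq U_{\phi_i}$, so the same charts $\phi_i$ of $N$ produce the embedding $\Phi_{\gamma\circ\Theta}$ for $\Gamma_\F(\gamma\circ\Theta)$. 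Because $\tilde\varphi_i^{-1}=\Theta^{-1}\circ\varphi_i^{-1}$, the $i$-th component of $\Phi_{\gamma\circ\Theta}(L_\Theta(\sigma))$ is
\[
d\phi_i\circ(\sigma\circ\Theta)\circ\tilde\varphi_i^{-1}|_{W_i}=d\phi_i\circ\sigma\circ\varphi_i^{-1}|_{W_i},
\]
which is exactly the $i$-th component of $\Phi_\gamma(\sigma)$. Hence $\Phi_{\gamma\circ\Theta}\circ L_\Theta=\Phi_\gamma$, so $L_\Theta=\Phi_{\gamma\circ\Theta}^{-1}\circ\Phi_\gamma$ is continuous as a composite of continuous maps.

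The only point requiring attention is that $\Phi_{\gamma\circ\Theta}^{-1}$ be applicable to each $\Phi_\gamma(\sigma)$; this is automatic from the identity $\Phi_{\gamma\circ\Theta}\circ L_\Theta=\Phi_\gamma$, which already exhibits every $\Phi_\gamma(\sigma)$ as an element of $\text{Im}(\Phi_{\gamma\circ\Theta})$. I expect the main thing to get right to be precisely this bookkeeping: that the transported $\tilde\varphi_i$ are genuine charts of $M_1$, that they still cover $M_1$ together with the $W_i$, and that the substitution $\tilde\varphi_i^{-1}=\Theta^{-1}\circ\varphi_i^{-1}$ makes the two component maps literally coincide. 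Once this is in place, both membership and continuity drop out without any further estimate, and in fact the argument shows $L_\Theta$ is a homeomorphism.
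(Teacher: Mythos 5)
Your proof is correct, but it takes a genuinely different route from the paper. The paper keeps \emph{independent} chart systems on $M_1$ and $M_2$ (related only by $\Theta(W_{1,i})\subseteq W_{2,i}$), so both in the membership step and in the continuity step it must handle the transition diffeomorphisms $\Theta_i=\phi_{2,i}\circ\Theta\circ\phi_{1,i}^{-1}$: membership is obtained by writing the local representative as $\zeta\circ g$ with $\zeta=\phi_N\circ\gamma\circ\phi_2^{-1}\in\F_{\mathrm{loc}}$ and invoking Lemma \ref{pre Floc theta} (ultimately the pullback axiom (PB)) together with the $\F_{\mathrm{loc}}$-characterization of $\F(M_1,N)$, and continuity is obtained by factoring $L_\Theta=\Phi_{\gamma\circ\Theta}^{-1}\circ\left(\F(\Theta_i,\R^n)\right)_{i=1}^k\circ\Phi_\gamma$, which requires both the continuity of the pullback maps $\F(\Theta_i,\R^n)$ and a separate computation verifying that the middle map sends $\mathrm{Im}(\Phi_\gamma)$ into $\mathrm{Im}(\Phi_{\gamma\circ\Theta})$. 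You instead transport the charts themselves, $\tilde\varphi_i:=\varphi_i\circ\Theta$, so that all local representatives are \emph{literally unchanged}; this collapses membership to the bare definition of $\F(M_1,N)$ and collapses continuity to the exact identity $\Phi_{\gamma\circ\Theta}\circ L_\Theta=\Phi_\gamma$, which simultaneously settles the image question and yields $L_\Theta=\Phi_{\gamma\circ\Theta}^{-1}\circ\Phi_\gamma$ without ever touching axiom (PB) or the $\F_{\mathrm{loc}}$ machinery; applying the same argument to $\Theta^{-1}$ even gives that $L_\Theta$ is a homeomorphism, which the paper does not state. The one point your argument silently relies on, and which you should make explicit, is that the transported $\tilde\varphi_i$ lie in the maximal $C^\infty$-atlas of $M_1$ (compatibility follows from smoothness of $\Theta$ and $\Theta^{-1}$) and that Proposition \ref{topembedding} applies to \emph{any} such finite family covering $M_1$ -- i.e.\ that the topology on $\Gamma_\F(\gamma\circ\Theta)$, defined via the full maximal atlas, is recovered by your transported family; this is exactly what Proposition \ref{topembedding} guarantees, so the gap is only expository. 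In exchange for its extra length, the paper's route illustrates how the pullback axiom enters when one is forced to compare two unrelated chart systems, a situation that recurs in its other propositions; your route buys brevity and the stronger conclusion, at the cost of being special to the situation where charts can be pulled back along a global diffeomorphism.
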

\begin{proof}
Let $\gamma\in\F(M_2,N)$. Let $\phi_1:U_1 \to V_1$ and $\phi_2:U_2 \to V_2$ charts of $M_1$ and $M_2$ respectively such that 
$\Theta(U_1)\subseteq U_2$. If $\phi_N:U_N \to V_N$ is a chart of $N$ such that $(\gamma\circ \Theta)(U_1)\subseteq U_N$ then
\[ \phi_N \circ \left( \gamma \circ \Theta \right) \circ \phi_1^{-1} = \phi_N \circ \gamma \circ \phi_2^{-1}\circ \phi_2 \circ \Theta\circ \phi_1^{-1}.\]
Since $\zeta:=\phi_N \circ \gamma \circ \phi_2^{-1} \in \F_{\text{loc}}(V_\psi,\Rn)$ and the map $g:=\phi_2 \circ \Theta\circ \phi_1^{-1}:V_\varphi \to V_\psi$ is a smooth diffeomorphism, by Lemma \ref{pre Floc theta} we have that $\zeta\circ g \in  \F_{\text{loc}}(V_\varphi,\Rn)$. Thus 
\[\gamma \circ \Theta \in \F(M_1,N).\]
Analogously, we can show that $\sigma\circ \Theta\in \Gamma_{\F} (\gamma\circ \Theta)$ for each $\sigma \in \gf$.\\
By compactness of $M_1$ and $M_2$, for $i\in\{1,...,k\}$ we consider charts $\phi_{1,i}:U_{1,i}\to V_{1,i}$ of $M_1$ such that there exists $W_{1,i}\in \mathcal{U}$ relatively compact in $V_{1,i}$ with $M_1=\cup_{i=1}^k \phi_{1,i}^{-1}(W_{1,i})$ and charts $\phi_{2,i}:U_{2,i}\to V_{2,i}$ of $M_2$ such that there exists $W_{2,i}\in \mathcal{U}$ relatively compact in $V_{2,i}$ with $M_2=\cup_{i=1}^k \phi_{2,i}^{-1}(W_{2,i})$ such that there exists a chart $\phi_{N,i} : U_{N.i}\to V_{N,i}$ of $N$ such that $\Theta(W_{1,i})\subseteq W_{2,i}$ and $\gamma\left(U_{2,i}\right)\subseteq U_{N,i}$. We define the topological embeddings
\[ \Phi_\gamma:\Gamma_{\F} (\gamma) \to \prod_{i=1}^k \F(W_{2,i},\R^n) ,\quad \sigma \mapsto 
\left( d\phi_{N,i} \circ \sigma \circ \phi_{2.i}^{-1}|_{W_{2,i}} \right)_{i=1}^k \]
and 
\[ \Phi_{\gamma\circ \Theta}:\Gamma_{\F} (\gamma\circ \Theta) \to \prod_{i=1}^k \F(W_{1,i},\R^n) ,\quad 
\sigma \mapsto \left( d\phi_{N,i} \circ \sigma \circ \Theta \circ \phi_{1,i}^{-1}|_{W_{1,i}} \right)_{i=1}^k \]
Since the map 
\[ \Theta_i:=\phi_{2.i}\circ \Theta\circ \phi_{1,i}^{-1}|_{W_{1,i}}:W_{1,i}\to \Theta_i(W_{2,i})\] 
is a smooth diffeomorphism, the map
\[ \F(\Theta_i,\Rn): \F_{\text{loc}}(\Theta_i(W_{2,i}),\Rn)\to \F_{\text{loc}}(W_{1,i},\Rn), \quad \tau\mapsto \tau \circ \Theta_i \]
and thus
\[ \overline{\Theta}: \prod_{i=1}^k\F_{\text{loc}}(\Theta_i(W_{2,i}),\Rn)\to \prod_{i=1}^k\F_{\text{loc}}(W_{2,i},\Rn), \quad 
(\tau_i)_{i=1}^n \mapsto \left(\tau_i \circ \Theta_i\right)_{i=1}^k \]
are continuous. We will show that $\overline{\Theta}(\text{Im}(\Phi_\gamma))\subseteq \text{Im}(\Phi_{\gamma\circ \Theta})$.\\
For each $i, j\in\{1,...,k\}$ and $\sigma\in \gf$, if
\begin{align*}
    \tau_i &:= d\phi_{N,i} \circ \sigma \circ \phi_{2,i}^{-1}|_{W_{2,i}}\circ \Theta_i \\
    &= d\phi_{N,i} \circ \sigma \circ \Theta\circ  \phi_{1,i}^{-1}|_{W_{1,i}}
\end{align*}
then
\begin{align*}
    \tau_{i}\circ \phi_{1,i}(p) 
    &= d\phi_{N,i}\circ \sigma \circ \Theta\circ \phi_{1,i}^{-1}\circ \phi_{1,i} (p) \\
    &= d\phi_{N,i}\circ \sigma \circ \Theta (p)\\
    &= d\phi_{N,i} \circ (T\phi_{N,j})^{-1} \Big(\phi_{N,j}\circ\gamma (p),d\phi_{N,j} \circ \sigma \circ \Theta(p)\Big) \\
    &= d\phi_{N,i} \circ (T\phi_{N,j})^{-1} \Big(\phi_{N,j}\circ\gamma (p),\tau_j\circ \phi_{1,j}(p)\Big)
\end{align*}
Hence $\overline{\Theta}(\text{Im}(\Phi_\gamma))\subseteq \text{Im}(\Phi_{\gamma\circ \Theta})$. In consequence, since
\[ L_\Theta = \Phi_{\gamma\circ \Theta}^{-1} \circ \left( \F(\Theta_i,\Rn) \right)_{i=1}^k \circ \Phi_\gamma\]
the map $L_\Theta$ is continuous. 
\end{proof}

\begin{prop}\label{Fgammaeval}
Let $\mathcal{U}$ be a good collection of open subsets of $[0,\infty)^m$ and $(\F(U,\R))_{U\in \mathcal{U}}$ be a family of locally convex spaces suitable for global analysis. Let $M$ be a compact manifold with corners and $N$ be a smooth manifold. Then the evaluation map
\[ \epsilon:\Gamma_\F (\gamma)\times M\to TN,\quad(\sigma,p)\mapsto \sigma(p) \]
is continuous. Moreover, for each $p\in M$ the point evaluation map
\[ \epsilon_p:\Gamma_\F (\gamma) \to TN,\quad \sigma \mapsto \sigma(p) \]
is smooth, and its co-restriction as a map to $T_{\gamma(p)}N$ is linear.
\end{prop}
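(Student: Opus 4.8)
The plan is to reduce everything to the topological embedding $\Phi_\gamma$ from Proposition~\ref{topembedding} and to work in charts. Choose charts $\varphi_i\colon U_i\to V_i$ of $M$ with $W_i\in\mathcal{U}$ relatively compact in $V_i$ and $M=\cup_{i=1}^k\varphi_i^{-1}(W_i)$, together with charts $\phi_i\colon U_{\phi_i}\to V_{\phi_i}$ of $N$ such that $\gamma(U_i)\subseteq U_{\phi_i}$. Let $\tau_i:=\text{pr}_i\circ\Phi_\gamma\colon\Gamma_\F(\gamma)\to\F(W_i,\Rn)$ denote the $i$-th component of the embedding, a continuous linear map. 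The description of $\Phi_\gamma^{-1}$ in Proposition~\ref{topembedding} gives, for $p\in\varphi_i^{-1}(W_i)$,
\[
\epsilon(\sigma,p)=\sigma(p)=T\phi_i^{-1}\Big(\phi_i(\gamma(p)),\,\tau_i(\sigma)(\varphi_i(p))\Big).
\]

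First I would prove joint continuity. Since the sets $\varphi_i^{-1}(W_i)$ form an open cover of $M$, the sets $\Gamma_\F(\gamma)\times\varphi_i^{-1}(W_i)$ form an open cover of $\Gamma_\F(\gamma)\times M$, so it suffices to check that $\epsilon$ is continuous on each of them. There $T\phi_i^{-1}$ is smooth and $\phi_i\circ\gamma$ is continuous, so the only real point is the continuity of $(\sigma,p)\mapsto\tau_i(\sigma)(\varphi_i(p))$. This factors through the evaluation map $\F(W_i,\Rn)\times W_i\to\Rn$, $(\tau,x)\mapsto\tau(x)$, precomposed with the continuous maps $\sigma\mapsto\tau_i(\sigma)$ and $p\mapsto\varphi_i(p)$.

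The crux, and the main analytic obstacle, is the continuity of this evaluation map. I would obtain it from the continuous inclusion $J_i\colon\F(W_i,\Rn)\to BC(W_i,\Rn)$ of Remark~\ref{Jgamma}, which reduces the problem to the evaluation $BC(W_i,\Rn)\times W_i\to\Rn$. The latter is continuous because for $\tau'\to\tau$ in the supremum norm and $x'\to x$ in $W_i$ one estimates $\lVert\tau'(x')-\tau(x)\rVert\leq\lVert\tau'-\tau\rVert_\infty+\lVert\tau(x')-\tau(x)\rVert$, where the second term tends to $0$ by continuity of $\tau$. This yields joint continuity of $\epsilon$ on each patch, hence on all of $\Gamma_\F(\gamma)\times M$.

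For smoothness of $\epsilon_p$, fix $p$ and pick $i$ with $p\in\varphi_i^{-1}(W_i)$; the displayed formula then holds for all $\sigma\in\Gamma_\F(\gamma)$. Here $\phi_i(\gamma(p))$ is constant in $\sigma$, and $\sigma\mapsto\tau_i(\sigma)(\varphi_i(p))$ is the composition of the continuous linear map $\tau_i$ with the point evaluation $\varepsilon_{\varphi_i(p)}\colon\F(W_i,\Rn)\to\Rn$, which is continuous and linear; hence $\sigma\mapsto\big(\phi_i(\gamma(p)),\tau_i(\sigma)(\varphi_i(p))\big)$ is a continuous affine, thus smooth, map into $V_{\phi_i}\times\Rn$. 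Composing with the smooth chart map $T\phi_i^{-1}$ shows $\epsilon_p$ is smooth. Finally, co-restricting to the fiber $T_{\gamma(p)}N$ and identifying it with $\Rn$ via the fiber-linear isomorphism $d\phi_i$ turns $\epsilon_p$ into the linear map $\sigma\mapsto\tau_i(\sigma)(\varphi_i(p))$, so the co-restriction is linear, as claimed.
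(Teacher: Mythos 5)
Your proof is correct, but it takes a genuinely different route from the paper. The paper's proof is a two-line reduction: it factors $\epsilon$ and $\epsilon_p$ through the space $\Gamma_C(\gamma)$ of continuous sections with the compact-open topology, writing $\epsilon = \tilde{\epsilon}\circ (J_\Gamma,\text{id}_M)$ and $\epsilon_p = \tilde{\epsilon}_p\circ J_\Gamma$, where $J_\Gamma\colon \Gamma_\F(\gamma)\to\Gamma_C(\gamma)$ is the continuous inclusion of Remark \ref{Jgamma}, and then cites the reference [AGS] for the continuity of $\tilde{\epsilon}$ and the smoothness of $\tilde{\epsilon}_p$ on $\Gamma_C(\gamma)$. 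You instead work entirely in charts: you use the embedding $\Phi_\gamma$ of Proposition \ref{topembedding} to get the explicit local formula $\sigma(p)=T\phi_i^{-1}\bigl(\phi_i(\gamma(p)),\tau_i(\sigma)(\varphi_i(p))\bigr)$, cover $\Gamma_\F(\gamma)\times M$ by the open sets $\Gamma_\F(\gamma)\times\varphi_i^{-1}(W_i)$, and reduce the analytic core to the continuity of the evaluation $BC(W_i,\Rn)\times W_i\to\Rn$ via the continuous inclusions $J_i$ — the same underlying mechanism (inclusion into a $BC$-type space) that the paper uses, but applied at the level of local function spaces rather than global section spaces. What your approach buys is self-containedness: it avoids the external citation, it makes the smoothness of $\epsilon_p$ an explicit composition of a continuous linear map, a point evaluation, and the chart map $T\phi_i^{-1}$, and it gives the linearity of the co-restriction as a by-product of the local formula (the paper does not argue this point explicitly; it is also immediate from the pointwise vector space operations on $\Gamma_\F(\gamma)$, which is an even shorter justification you could have used). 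What the paper's approach buys is brevity and reuse of established infrastructure for continuous sections.
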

\begin{proof}
Since the evaluation map
\[  \tilde{\epsilon}: \Gamma_C(\gamma)\times M\to TN, \quad (\sigma, p)\mapsto \sigma(p) \]
is continuous and the evaluation map $ \tilde{\epsilon}_p : \Gamma_C (\gamma)\to TN$, $\sigma\mapsto \sigma(p)$ is smooth for each $p\in M$ (see \cite{AGS}). Then $\epsilon = \tilde{\epsilon} \circ (J_\Gamma,\text{Id}_\R)$ and $\epsilon_p = \tilde{\epsilon}_p \circ J_\Gamma$, where $J_\Gamma:\Gamma_\F (\gamma)\to \Gamma_C(\gamma)$ is the inclusion map, which is smooth by Remark \ref{Jgamma}.
\end{proof}

\section{Manifolds of $\mathcal{F}$-maps on compact manifolds}

\begin{de}\label{localaddition}
Let $N$ be a smooth manifold and $\pi_{N}:TN\to N$ its tangent bundle. A \textit{local addition} is a smooth map ${\Sigma:\Omega\to N}$ defined on a open neightborhood $\Omega \subseteq TN$ of the zero-section ${0_N := \{ 0_p \in T_p N : p\in N\}}$ such that
\begin{itemize}
\item[a)] $\Sigma(0_p)=p$ for all $p\in N$.
\item[b)] The image $\Omega^\prime:= \big( \pi_{N},\Sigma\big) (\Omega)$ is open in $N\times N$ and the map 
\begin{equation}
    \theta_N:\Omega\to \Omega^\prime,\quad v\mapsto \big( \pi_{N}(v),\Sigma(v) \big)
\end{equation}
is a $C^\infty$-diffeomorphism.
\end{itemize}
Moreover, if $T_{0_p}(\Sigma |_{T_p N})=id_{T_p N}$ for all $p\in N$, we say that the local addition $\Sigma$ is \textit{normalized}. We denote the local addition as the pair  $(\Omega, \Sigma)$.\\
If $\theta_N:\Omega\to \Omega^\prime$ is a diffeomorphism of $\mathbb{K}$-analytic manifolds, we call ${\Sigma:\Omega\to N}$ a $\mathbb{K}$-analytic local addition.
\end{de}

\begin{re} \label{locallemma}
Let $N$ be a smooth manifold which admits a local addition. If $\pi_{TN}:T(TN)\to TN$ denotes the tangent bundle of $TN$ and $\kappa:T(TN)\to T(TN)$ its canonical flip, then $T\Sigma\circ \kappa: \tau(T\Omega)\to TN$ it is a local addition on $TN$ \cite[Lemma A.11]{AGS}. Moreover, each manifold which admits a local addition also admits a normalized local addition \cite[Lemma A.14]{AGS}. From now we will assume that each local addition is normalized.
\end{re}

\begin{re}\label{Frecharts}
Let $\mathcal{U}$ be a good collection of open subsets of $[0,\infty)^m$ and $(\F(U,\R))_{U\in \mathcal{U}}$ is a family of locally convex spaces suitable for global analysis. Let $M$ be $m$-dimensional compact manifold with corners and $N$ be a smooth manifold which admits a local addition $\Sigma:\Omega\to N$. Let $\gamma\in \F(M,N)$. We define the set
\[ \mathcal{V}_\gamma := \{ \sigma \in \Gamma_\F (\gamma) : \sigma(M)\subseteq \Omega \}. \]
which is open in $\Gamma_\F (\gamma)$ (see Remark \ref{Jgamma}) and 
\[ \mathcal{U}_\gamma := \{ \xi \in \Gamma_\F (\gamma) : (\gamma,\xi)(M) \subseteq \Omega' \}.\]
Lemma \ref{pre FM Acts} enables us to define the map
\[   \Psi_\gamma:= \F(M,\Sigma) : \mathcal{V}_\gamma  \to \mathcal{U}_\gamma,\quad \sigma  \mapsto \Sigma\circ \sigma  \]
with inverse given by
\[ \Psi_\gamma^{-1} :\mathcal{U}_\gamma \to \mathcal{V}_\gamma,\quad \xi \mapsto \theta_N^{-1}\circ (\gamma, \xi).\]
Moreover, since $M$ is compact, we note that $BC(M,N)=C(M,N)$.
\end{re}
\noindent
The following lemma is just an application of \cite[Lemma 10.1]{Ber} to our particular case.
\begin{lemma}\label{FFclosed}
Let $E$ and $F$ be finite-dimensional vector spaces, $U\subseteq E$ open and $f:U\to F$ a map. If $F_0\subseteq F$ is a vector subspace and $f(U)\subseteq F_0$, then $f:U\to F$ is smooth if and only if $f|^{F_0}:U\to F_0$ is smooth.
\end{lemma}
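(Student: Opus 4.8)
The plan is to prove both implications of the equivalence, exploiting the fact that smoothness in the convenient/Michal-Bastiani sense is detected by directional derivatives, which live intrinsically in the ambient space $F$ but can be computed in the subspace $F_0$ once we know the values lie there.

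First I would treat the easy direction. Suppose $f|^{F_0}\colon U\to F_0$ is smooth. Let $\iota\colon F_0\to F$ denote the inclusion, which is a continuous linear (hence smooth) map between finite-dimensional spaces. Then $f=\iota\circ f|^{F_0}$ is a composition of smooth maps, so $f$ is smooth. This uses only the chain rule for smooth maps and the fact that linear maps between finite-dimensional spaces are smooth.

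The converse is the substantive direction, and it is where I expect the only mild subtlety. Assume $f\colon U\to F$ is smooth with $f(U)\subseteq F_0$; I must show the co-restriction $f|^{F_0}$ is smooth. The key observation is that all iterated directional derivatives of $f$ also take values in $F_0$. Indeed, for $x\in U$ and $h\in E$ the first directional derivative is the limit
\[
df(x,h)=\lim_{t\to 0}\frac{f(x+th)-f(x)}{t},
\]
and each difference quotient lies in the vector subspace $F_0$ (since $f(U)\subseteq F_0$); as $F_0$ is closed in the finite-dimensional space $F$, the limit $df(x,h)$ lies in $F_0$ as well. Iterating, every higher derivative $d^{(k)}f(x,h_1,\dots,h_k)$ is again obtained as a limit of $F_0$-valued difference quotients, hence lies in $F_0$. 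The main obstacle, such as it is, amounts to making precise the bookkeeping of this induction on the order $k$ and checking that the co-restricted maps $d^{(k)}(f|^{F_0})=(d^{(k)}f)|^{F_0}$ are continuous as maps into $F_0$; but continuity into $F_0$ is immediate because $F_0$ carries the subspace topology and the corestriction of a continuous map to a closed subspace containing its image is continuous.

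Finally I would assemble these observations into the $C^k$-by-$C^k$ criterion: $f|^{F_0}$ is $C^1$ because its directional derivative $d(f|^{F_0})(x,h)=df(x,h)\in F_0$ exists and $(x,h)\mapsto df(x,h)$ is continuous into $F_0$; and inductively, since $d(f|^{F_0})=(df)|^{F_0}$ and $df$ is smooth into $F$ with image in the subspace $F_0\times F_0$-valued derivative structure, the same reasoning applied to $df$ shows $f|^{F_0}$ is $C^{k+1}$ whenever it is $C^k$. Hence $f|^{F_0}$ is $C^\infty$. In the finite-dimensional setting all of this can equivalently be phrased via partial derivatives and the fact that a closed linear subspace $F_0$ is complemented, so one may simply compose $f$ with a continuous linear projection $\mathrm{pr}\colon F\to F_0$ fixing $F_0$: then $f|^{F_0}=\mathrm{pr}\circ f$ is smooth as a composition of smooth maps, giving the cleanest route to the converse.
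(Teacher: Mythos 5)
Your proof is correct. Note, however, that the paper does not prove this lemma at all: it simply invokes \cite[Lemma 10.1]{Ber}, a general result of Bertram--Gl\"ockner--Neeb valid for any \emph{closed} vector subspace $F_0$ of an arbitrary locally convex space $F$. Your first, longer argument (difference quotients lie in the subspace $F_0$, closedness of $F_0$ forces the limits $df(x,h)$ and all higher derivatives to lie in $F_0$, then induct on $k$ using $d(f|^{F_0})=(df)|^{F_0}$ and the fact that corestriction to a topological subspace preserves continuity) is in essence a reconstruction of the proof of that cited lemma, so it buys generality: it would survive verbatim in the infinite-dimensional setting where $F_0$ is closed but possibly uncomplemented. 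Your second argument --- choose a continuous linear projection $\mathrm{pr}\colon F\to F_0$ with $\mathrm{pr}|_{F_0}=\mathrm{id}_{F_0}$ and write $f|^{F_0}=\mathrm{pr}\circ f$ --- is the cleanest route and is perfectly adequate for the lemma as stated, since every subspace of a finite-dimensional space is closed and complemented; its only cost is that it does not generalize beyond complemented subspaces. One cosmetic remark: in your induction you say corestriction to a \emph{closed} subspace containing the image is continuous; closedness is irrelevant there (any subspace with the subspace topology works), it is only needed to guarantee that the limits of the $F_0$-valued difference quotients remain in $F_0$.
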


\begin{teo} \label{main1}
Let $\mathcal{U}$ be a good collection of open subsets of $[0,\infty)^m$ and $(\F(U,\R))_{U\in \mathcal{U}}$ be a family of locally convex spaces suitable for global analysis, then for each compact manifold $M$ with corners and smooth manifold~$N$ without boundary which admits a local addition, the set $\F(M,N)$ 
admits a smooth manifold structure
such that the sets ${\mathcal U}_\gamma$ are open in $\F(M,N)$
for all $\gamma\in \F(M,N)$
and $\Psi_\gamma\colon {\mathcal V}_\gamma\to {\mathcal U}_\gamma$
is a $C^\infty$-diffeomorphism.
\end{teo}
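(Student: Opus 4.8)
The plan is to build the smooth structure from the atlas $\{(\mathcal{U}_\gamma,\Psi_\gamma^{-1})\}_{\gamma\in\F(M,N)}$, where each chart $\Psi_\gamma^{-1}\colon \mathcal{U}_\gamma\to \mathcal{V}_\gamma$ maps a chart domain in $\F(M,N)$ onto the open subset $\mathcal{V}_\gamma$ of the locally convex space $\Gamma_\F(\gamma)$. First I would check that these domains cover $\F(M,N)$: the zero section $0_\gamma\colon p\mapsto 0_{\gamma(p)}$ equals $0_{TN}\circ\gamma$, which lies in $\Gamma_\F(\gamma)$ by Lemma~\ref{pre FM Acts} (as the zero-section map $0_{TN}$ is smooth) and satisfies $\Sigma\circ 0_\gamma=\gamma$; hence $0_\gamma\in\mathcal{V}_\gamma$ and $\gamma=\Psi_\gamma(0_\gamma)\in\mathcal{U}_\gamma$. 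Note that the model spaces $\Gamma_\F(\gamma)$ vary with $\gamma$, which is permissible for a manifold modeled on locally convex spaces, provided the transition maps are smooth.

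For the topology I would declare $O\subseteq \F(M,N)$ open precisely when $\Psi_\gamma^{-1}(O\cap\mathcal{U}_\gamma)$ is open in $\Gamma_\F(\gamma)$ for every $\gamma$. To see that the $\mathcal{U}_\gamma$ are themselves open and that the $\Psi_\gamma$ become homeomorphisms, the decisive point is that each transition domain
\[
D_{12}:=\{\sigma\in\mathcal{V}_{\gamma_1}\colon (\gamma_2,\Sigma\circ\sigma)(M)\subseteq\Omega'\}
\]
is open in $\Gamma_\F(\gamma_1)$. This holds because $\sigma\mapsto(\gamma_2,\Sigma\circ\sigma)$ is continuous into $C(M,N\times N)$ with the compact-open topology---using the continuity of the inclusion $J\colon\Gamma_\F(\gamma_1)\to\Gamma_C(\gamma_1)$ from Remark~\ref{Jgamma} together with continuity of $\sigma\mapsto\Sigma\circ\sigma$---while $\{g\in C(M,N\times N)\colon g(M)\subseteq\Omega'\}$ is open by compactness of $M$ and openness of $\Omega'$. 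Hausdorffness I would obtain by comparing this topology with the subspace topology inherited from $C(M,N)$, which is Hausdorff since $N$ is Hausdorff and $M$ is compact.

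The heart of the proof is the smoothness of the transition maps
\[
\Psi_{\gamma_2}^{-1}\circ\Psi_{\gamma_1}\colon D_{12}\to\Gamma_\F(\gamma_2),\qquad
\sigma\mapsto \theta_N^{-1}\circ(\gamma_2,\Sigma\circ\sigma).
\]
I would establish this in local charts via the embeddings $\Phi_{\gamma_1}$ and $\Phi_{\gamma_2}$ of Proposition~\ref{topembedding}, which identify each $\Gamma_\F(\gamma_j)$ with a closed subspace of a product $\prod_{i=1}^k\F(W_i,\Rn)$. After passing to a common cover $M=\bigcup_i\varphi_i^{-1}(W_i)$ on whose patches $\gamma_1$ and $\gamma_2$ take values in charts $\phi_i$ and $\psi_i$ of $N$, the transition map becomes, componentwise, a superposition operator of the form $\tau_i\mapsto f_i\circ(g_{1,i},\tau_i,g_{2,i})|_{W_i}$, where $g_{1,i}=\phi_i\circ\gamma_1\circ\varphi_i^{-1}$ and $g_{2,i}=\psi_i\circ\gamma_2\circ\varphi_i^{-1}$ are fixed functions and $f_i$ is a fixed smooth map assembled from $\Sigma$, $\theta_N^{-1}$, and the chart transitions of $N$. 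Its smoothness then follows from Lemma~\ref{F new}, absorbing the fixed base data $g_{1,i},g_{2,i}$ as a continuous affine insertion into the source; Lemma~\ref{FFclosed} lets me pass between the product space and the closed image subspaces. The main obstacle I anticipate lies exactly in this local reduction: verifying that the pointwise map $f_i$ is well defined and smooth on the relevant open set, which forces the usual domain bookkeeping---choosing a slightly larger $W_i'$ with $\overline{W_i}\subseteq W_i'$ relatively compact in $V_i$ so that Lemma~\ref{F new} applies with $U=W_i'$ and $W=W_i$, and that restriction $\F(W_i',\cdot)\to\F(W_i,\cdot)$ from Lemma~\ref{pre F Acts} is used to match the two embeddings---together with checking that the construction respects the gluing constraints defining $\text{Im}(\Phi_{\gamma_2})$.

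Finally, once smoothness of all transition maps is established, the homeomorphism property of the $\Psi_\gamma$ and the openness of the sets $\mathcal{U}_\gamma$ are automatic, the atlas $\{(\mathcal{U}_\gamma,\Psi_\gamma^{-1})\}$ defines the desired smooth manifold structure on $\F(M,N)$, and each $\Psi_\gamma\colon\mathcal{V}_\gamma\to\mathcal{U}_\gamma$ is a $C^\infty$-diffeomorphism by construction.
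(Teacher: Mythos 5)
Your proposal is correct and follows essentially the same route as the paper's proof: the atlas $\{(\mathcal{U}_\gamma,\Psi_\gamma^{-1})\}$ with the final topology, openness obtained by comparison with the compact-open topology via the continuous inclusions of Remark \ref{Jgamma}, and smoothness of the transition maps reduced through the embeddings $\Phi_\gamma$ of Proposition \ref{topembedding} to superposition operators $h_i=\F(W_i'/W_i,H_i)$ handled by Lemma \ref{F new}, with the same $W_i\subseteq W_i'$ bookkeeping and Lemma \ref{FFclosed} to pass to the closed image. The only differences are that you make explicit some points the paper leaves implicit (covering by the zero sections, Hausdorffness, and openness of the transition domains argued at the level of sections), which are harmless refinements of the same argument.
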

\begin{proof}
We endow $\F(M,N)$ with the final topology with respect to the family $\Psi_\gamma\colon {\mathcal V}_\gamma\to {\mathcal U}_\gamma$ for each $\gamma\in \F(M,N)$.
If we define the maps $\Psi_\gamma^C\colon {\mathcal V}_\gamma^C\to {\mathcal U}_\gamma^C$ on the space of continuous functions $C(M,N)$ for each $\gamma\in C(M,N)$ then the final topology on $C(M,N)$ coincides with its topology (the compact-open topology), whence the inclusion map
\[ J: \F(M,N) \to C(M,N), \quad \gamma \mapsto \gamma\]
is continuous. Moreover, since 
\[ \mathcal{U}_{J(\gamma)}^C := \{ \xi \in C(M,N) : (J(\gamma),\xi)(M) \subseteq \Omega' \}\]
is open in $C(M,N)$, the set
\[ \mathcal{U}_\gamma = \mathcal{U}_\gamma^C \cap \F(M,N) \]
is open in $\F(M,N)$.\\
The goal is to make to the family $\{ (\mathcal{U}_\gamma, \Psi_\gamma^{-1}) : \gamma \in \F (M,N) \}$ an atlas for the manifold structure.\\ 
Let $\gamma,\xi\in \F (M,N)$, it remains to show that the charts are compatible, i.e. the smoothness of the map 
\begin{equation}\label{Fcompatiblecharts}
\Lambda_{\xi,\gamma}:= \Psi_\xi^{-1} \circ \Psi_\gamma : \Psi_\gamma^{-1}(\mathcal{U}_\gamma \cap \mathcal{U}_\xi)\subseteq \Gamma_{\F} (\gamma) \to \Gamma_{\F} (\xi) 
    , \quad \sigma \mapsto \theta_N^{-1} \circ (\xi, \Sigma \circ \sigma ).
\end{equation}
For $i\in\{1,...,k\}$, let $\varphi_i:U_{M,i}\to V_{M,i}$ be charts of $M$ and $W_{M,i}\in \mathcal{U}$ such that $W_{M,i}$ is relatively compact in $V_i$ with $M=\cup_{i=1}^k \varphi_i^{-1}(W_{M,i})$ and charts $\phi_i^\gamma : U_{N,i}^\gamma\to V_{N,i}^\gamma$ and $\phi_i^\xi : U_{N,i}^\xi\to V_{N,i}^\xi$ of $N$ such that $\gamma\left( U_{M,i})\right)\subseteq U_{N,i}^\gamma$ and $\xi\left(U_{M,i}\right)\subseteq U_{N,i}^\xi$.\\ 
We will study the smoothness of the composition
\[ \Phi_\xi \circ\Lambda_{\xi,\gamma} : \Psi_\gamma^{-1}(\mathcal{U}_\gamma \cap \mathcal{U}_\xi) \to \text{Im}(\Phi_\xi )\subseteq\prod_{i=1}^k \F (W_{M,i},\R^n),\quad \sigma \mapsto \left( d\phi_i^\xi \circ \Lambda_{\xi,\gamma}(\sigma) \circ  \varphi_i^{-1}|_{W_{M,i}}  \right)_{i=1}^k\]
which is equivalent to the smoothness of $\Lambda_{\xi,\gamma}$, where $\Phi_\xi$ is the linear topological embedding as in Proposition \ref{topembedding}. By Definition \ref{pre goodcollection} c), we find $W_M^\prime$ in $\mathcal{U}$ such that $\overline{W_M^\prime}$ is relatively compact in $V_i$ and $W_{M,i}^\prime$ contains the closure of $W_{M,i}$. \\
For each $i\in\{1,...,k\}$ and $\sigma \in  \Psi_\gamma^{-1}(\mathcal{U}_\gamma \cap \mathcal{U}_\xi)$ we have
\[
    d\phi_i^\xi \circ \left(\Psi_\xi^{-1}( \Psi_\gamma (\sigma))\right) \circ  \varphi_i^{-1}|_{W_{M,i}^\prime}
    = d\phi_i^\xi \circ \theta_N^{-1} \circ \left(\xi\circ \varphi_i^{-1}|_{W_{M,i}^\prime}, \Sigma \circ \sigma\circ \varphi_i^{-1}|_{W_{M,i}^\prime}\right).
\]
Since $\sigma\left(\varphi_i^{-1}(W_{M,i}^\prime)\right) \subseteq  TU_{\phi_i^\gamma}$ we can do
\begin{align*}
\Sigma \circ \sigma\circ \varphi_i^{-1}|_{W_{M,i}^\prime} &= \Sigma\circ \left(T\phi_i^\gamma\right)^{-1} \circ T\phi_i^\gamma \circ \sigma\circ \varphi_i^{-1}|_{W_{M,i}^\prime} \\
&=\Sigma\circ \left(T\phi_i^\gamma\right)^{-1}\left(\phi_i^\gamma \circ \gamma\circ\varphi_i^{-1}|_{W_{M,i}^\prime} , d\phi_i^\gamma\circ \sigma\circ \varphi_i^{-1}|_{W_{M,i}^\prime}\right)
\end{align*}  
and 
 \[\xi\circ \varphi_i^{-1}|_{W_{M,i}^\prime} = \left(\phi_i^\xi\right)^{-1} \circ\left( \phi_i^\xi\circ \xi\circ \varphi_i^{-1}|_{W_{M,i}^\prime}\right).\]
Because all of the functions involved are continuous and have an open domain, also the composition
\begin{equation}
H_i(x,y,z) := d\phi_i^\xi \circ \theta_N^{-1} \circ \left(\left(\phi_i^\xi\right)^{-1}\left( x \right), \Sigma\circ \left(T\phi_i^\gamma\right)^{-1} \left( y , z \right) \right),
\end{equation}  
has an open domain $\mathcal{O}_i$. Hence the map $H_i: \mathcal{O}_i \to E$ is smooth. \\
By Lemma \ref{F new}, the map
\[ h_i:=\F(W_{M,i}^\prime/ W_{M,i}, H_i)\]
is smooth. By the preceding
\[\Phi_\xi\circ\Lambda_{\xi,\gamma} = h_i(\phi_i^\xi \circ \xi\circ \varphi_i^{-1}|_{W_{M,i}^\prime}, \phi_i^\gamma\circ \gamma\circ \varphi_i^{-1}|_{W_{M,i}^\prime}, d\phi_i^\gamma\circ \sigma\circ \varphi_i^{-1}|_{W_{M,i}^\prime}), \]
which is a smooth function of $\sigma$, using that the maps
 
\[ \Gamma_\F(\gamma)\to \F(W_{M,i}^\prime,\R^n),\quad \sigma\mapsto d\phi_i^\gamma\circ \sigma\circ \varphi_i^{-1}|_{W_{M,i}^\prime}\] 
are continuous linear by definition of the topology of $\Gamma_\F(\gamma)$.
Therefore $\Psi_\xi^{-1} \circ \Psi_\gamma$ is smooth. 
\end{proof}

\noindent
Proceding in the same way, using the fact that composition of $\mathbb{K}$-analytic maps is $\mathbb{K}$-analytic and using the analytic version of Lemma \ref{FFclosed} (see \cite{FGL1}), we can obtain the analogous case.

\begin{coro}
Let $\mathcal{U}$ be a good collection of open subsets of $[0,\infty)^m$ and $(\F(U,\R))_{U\in \mathcal{U}}$ be a family of locally convex spaces suitable for global analysis. For each compact manifold $M$ with corners and $\mathbb{K}$-analytic manifold~$N$ without boundary which admits a $\mathbb{K}$-analytic local addition, the set $\F(M,N)$ 
admits a $\mathbb{K}$-analytic manifold structure
such that the sets ${\mathcal U}_\gamma$ are open in $\F(M,N)$
for all $\gamma\in \F(M,N)$
and $\Psi_\gamma\colon {\mathcal V}_\gamma\to {\mathcal U}_\gamma$
is a $C^\infty$-diffeomorphism.
\end{coro}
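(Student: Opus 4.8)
The plan is to run the proof of Theorem \ref{main1} essentially verbatim, upgrading each occurrence of ``smooth'' to ``$\mathbb{K}$-analytic'' and checking that every tool invoked admits a $\mathbb{K}$-analytic counterpart. The topological part is identical and uses nothing about the differentiable structure of $N$: I would endow $\F(M,N)$ with the final topology with respect to the family $\Psi_\gamma\colon \mathcal{V}_\gamma\to\mathcal{U}_\gamma$, observe that the inclusion $J\colon\F(M,N)\to C(M,N)$ is continuous so that each $\mathcal{U}_\gamma=\mathcal{U}_\gamma^C\cap\F(M,N)$ is open, and take $\{(\mathcal{U}_\gamma,\Psi_\gamma^{-1})\}_{\gamma\in\F(M,N)}$ as candidate atlas. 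This transfers without change, so the only point requiring genuine attention is the $\mathbb{K}$-analyticity of the transition maps $\Lambda_{\xi,\gamma}=\Psi_\xi^{-1}\circ\Psi_\gamma$.

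For the transition maps I would repeat the chart computation of Theorem \ref{main1}. Since $N$ is a $\mathbb{K}$-analytic manifold and $\Sigma$ is a $\mathbb{K}$-analytic local addition, the diffeomorphism $\theta_N$ and hence $\theta_N^{-1}$ are $\mathbb{K}$-analytic, the charts $\phi_i^\gamma,\phi_i^\xi$ and the induced tangent charts $T\phi_i^\gamma$ are $\mathbb{K}$-analytic, and $d\phi_i^\xi$ is linear, hence $\mathbb{K}$-analytic. As a composition of $\mathbb{K}$-analytic maps between finite-dimensional spaces is $\mathbb{K}$-analytic, the local representative
\[ H_i(x,y,z)=d\phi_i^\xi\circ\theta_N^{-1}\circ\left((\phi_i^\xi)^{-1}(x),\,\Sigma\circ(T\phi_i^\gamma)^{-1}(y,z)\right) \]
is $\mathbb{K}$-analytic on its open domain $\mathcal{O}_i$.

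The key step is then to invoke the $\mathbb{K}$-analytic analog of Lemma \ref{F new}, namely that for a $\mathbb{K}$-analytic $f$ the local superposition operator $\F(W_{M,i}^\prime/W_{M,i},f)$ is $\mathbb{K}$-analytic; this parallels Lemma \ref{pre FM analytic} and is available from \cite{FGL1}. It yields that $h_i:=\F(W_{M,i}^\prime/W_{M,i},H_i)$ is $\mathbb{K}$-analytic. Substituting into the identity
\[ \Phi_\xi\circ\Lambda_{\xi,\gamma}=\left(h_i\big(\phi_i^\xi\circ\xi\circ\varphi_i^{-1}|_{W_{M,i}^\prime},\,\phi_i^\gamma\circ\gamma\circ\varphi_i^{-1}|_{W_{M,i}^\prime},\,d\phi_i^\gamma\circ\sigma\circ\varphi_i^{-1}|_{W_{M,i}^\prime}\big)\right)_{i=1}^k \]
and using that $\sigma\mapsto d\phi_i^\gamma\circ\sigma\circ\varphi_i^{-1}|_{W_{M,i}^\prime}$ is continuous linear, hence $\mathbb{K}$-analytic, I would conclude that $\Phi_\xi\circ\Lambda_{\xi,\gamma}$ is $\mathbb{K}$-analytic. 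Finally, since $\Phi_\xi$ is a linear topological embedding with closed image (Proposition \ref{topembedding}), the analytic version of Lemma \ref{FFclosed} permits the co-restriction and gives that $\Lambda_{\xi,\gamma}$ is $\mathbb{K}$-analytic as a map into $\Gamma_\F(\xi)$.

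The main obstacle, exactly as in the smooth case, is the $\mathbb{K}$-analyticity of $h_i$, i.e.\ the analytic analog of Lemma \ref{F new}. Whereas the smooth version was established by induction on the order of differentiability via difference quotients and weak integrals, the analytic statement is most naturally obtained either directly from the superposition results of \cite{FGL1} or by the standard criterion that a map is $\mathbb{K}$-analytic precisely when it is smooth and admits a local complex-analytic extension; once this is granted, the remainder is a mechanical transcription of the argument for Theorem \ref{main1}. Since both $\Psi_\gamma$ and $\Psi_\gamma^{-1}$ are expressed through the $\mathbb{K}$-analytic maps $\Sigma$ and $\theta_N^{-1}$, the charts are $\mathbb{K}$-analytically compatible and the resulting atlas endows $\F(M,N)$ with a $\mathbb{K}$-analytic manifold structure for which the $\mathcal{U}_\gamma$ are open and each $\Psi_\gamma$ is a $\mathbb{K}$-analytic diffeomorphism.
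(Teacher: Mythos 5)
Your proposal is correct and takes essentially the same route as the paper: the paper's own justification is a one-sentence remark that one proceeds exactly as in Theorem \ref{main1}, using that compositions of $\mathbb{K}$-analytic maps are $\mathbb{K}$-analytic and the analytic version of Lemma \ref{FFclosed} from \cite{FGL1}. Your write-up is a faithful (and more explicit) expansion of that argument, correctly isolating the one nontrivial ingredient the paper leaves implicit, namely the $\mathbb{K}$-analytic analogue of Lemma \ref{F new} for the local superposition operators $h_i$, in the spirit of Lemma \ref{pre FM analytic}.
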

\noindent

\begin{prop}\label{Fpropfon}
Let $\mathcal{U}$ be a good collection of open subsets of $[0,\infty)^m$ and $(\F(U,\R))_{U\in \mathcal{U}}$ be a family of locally convex spaces suitable for global analysis. Let $M$ be an $m$-dimensional compact smooth manifold with corners, $N_1$ and $N_2$ be finite dimensional smooth manifold which admits local addition. If $f:N_1\to N_2$ is a $smooth$ map, then the map
\[ \F(M,f):\F(M,N_1)\to \F(M,N_2),\quad \gamma \mapsto f\circ \gamma ,\]
is $smooth$.
\end{prop}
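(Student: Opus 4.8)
The plan is to verify smoothness of $\F(M,f)$ in the charts supplied by Theorem \ref{main1}, mirroring the chart-compatibility argument carried out there. Well-definedness is immediate from Lemma \ref{pre FM Acts}, which gives $f\circ\gamma\in\F(M,N_2)$ for every $\gamma\in\F(M,N_1)$. Since smoothness is local, I fix $\gamma_0\in\F(M,N_1)$, set $\gamma_0':=f\circ\gamma_0\in\F(M,N_2)$, and use the source chart $\Psi_{\gamma_0}^{-1}\colon\mathcal{U}_{\gamma_0}\to\mathcal{V}_{\gamma_0}\subseteq\Gamma_\F(\gamma_0)$ together with the target chart $\Psi_{\gamma_0'}^{-1}\colon\mathcal{U}_{\gamma_0'}\to\mathcal{V}_{\gamma_0'}\subseteq\Gamma_\F(\gamma_0')$ from Remark \ref{Frecharts}. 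The local representative of $\F(M,f)$ is then
\[
g\colon \sigma\longmapsto \Psi_{\gamma_0'}^{-1}\big(f\circ\Sigma_1\circ\sigma\big)=\theta_{N_2}^{-1}\circ\big(f\circ\gamma_0,\,f\circ\Sigma_1\circ\sigma\big).
\]
At $\sigma=0$ the pair $(f\circ\gamma_0,\,f\circ\Sigma_1\circ 0)=(\gamma_0',\gamma_0')$ sends $M$ into the diagonal of $\Omega_2'$, so by continuity of evaluation (Remark \ref{Jgamma}) the condition $(f\circ\gamma_0,f\circ\Sigma_1\circ\sigma)(M)\subseteq\Omega_2'$ holds on an open neighborhood $\mathcal{W}$ of $0$ in $\mathcal{V}_{\gamma_0}$, on which $\theta_{N_2}^{-1}$ is defined; I work on $\mathcal{W}$, which suffices.

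Next I introduce finitely many local data as in Theorem \ref{main1}. By compactness of $M$, choose charts $\varphi_i\colon U_{M,i}\to V_{M,i}$ and sets $W_{M,i}\in\mathcal{U}$ relatively compact in $V_{M,i}$ with $M=\bigcup_{i=1}^k\varphi_i^{-1}(W_{M,i})$, together with charts $\phi_i^1\colon U_i^1\to V_i^1$ of $N_1$ and $\phi_i^2\colon U_i^2\to V_i^2$ of $N_2$ such that $\gamma_0(U_{M,i})\subseteq U_i^1$, $(f\circ\gamma_0)(U_{M,i})\subseteq U_i^2$, and $f(U_i^1)\subseteq U_i^2$ (shrinking the $U_i^1$ if necessary); pick $W_{M,i}'\in\mathcal{U}$ relatively compact in $V_{M,i}$ with $\overline{W_{M,i}}\subseteq W_{M,i}'$. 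By Proposition \ref{topembedding} the map $\Phi_{\gamma_0'}$ is a linear topological embedding of $\Gamma_\F(\gamma_0')$ onto a closed subspace of $\prod_{i=1}^k\F(W_{M,i},\R^{n_2})$, so—exactly as in the proof of Theorem \ref{main1}, and using the corestriction principle of Lemma \ref{FFclosed}—smoothness of $g$ into $\Gamma_\F(\gamma_0')$ is equivalent to smoothness of $\Phi_{\gamma_0'}\circ g$ into the product together with $\Phi_{\gamma_0'}\circ g$ taking values in $\mathrm{Im}(\Phi_{\gamma_0'})$.

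To analyze each component, I rewrite, on $\varphi_i^{-1}(W_{M,i}')$,
\[
\Sigma_1\circ\sigma\circ\varphi_i^{-1}=\Sigma_1\circ(T\phi_i^1)^{-1}\big(\phi_i^1\circ\gamma_0\circ\varphi_i^{-1},\ d\phi_i^1\circ\sigma\circ\varphi_i^{-1}\big),
\]
and define the smooth map
\[
H_i(x,y,z):=d\phi_i^2\circ\theta_{N_2}^{-1}\circ\Big((\phi_i^2)^{-1}(x),\ f\circ\Sigma_1\circ(T\phi_i^1)^{-1}(y,z)\Big)
\]
on its (open) domain $\mathcal{O}_i\subseteq\R^{n_2}\times\R^{n_1}\times\R^{n_1}$. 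Bundling components via Lemma \ref{pre FExE FExFE}, the $i$-th component of $\Phi_{\gamma_0'}\circ g$ is then $\F(W_{M,i}'/W_{M,i},H_i)$ evaluated at the triple whose first two entries are the fixed functions $\phi_i^2\circ\gamma_0'\circ\varphi_i^{-1}|_{W_{M,i}'}$ and $\phi_i^1\circ\gamma_0\circ\varphi_i^{-1}|_{W_{M,i}'}$, and whose last entry is $d\phi_i^1\circ\sigma\circ\varphi_i^{-1}|_{W_{M,i}'}$. The map $\sigma\mapsto d\phi_i^1\circ\sigma\circ\varphi_i^{-1}|_{W_{M,i}'}$ is continuous linear by the definition of the topology on $\Gamma_\F(\gamma_0)$, while $\F(W_{M,i}'/W_{M,i},H_i)$ is smooth by Lemma \ref{F new}; hence each component depends smoothly on $\sigma$, and therefore $\Phi_{\gamma_0'}\circ g$ is smooth.

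The remaining step—and the main point to check—is that $\Phi_{\gamma_0'}\circ g$ actually takes values in $\mathrm{Im}(\Phi_{\gamma_0'})$, i.e. that the components $\tau_i:=d\phi_i^2\circ g(\sigma)\circ\varphi_i^{-1}|_{W_{M,i}}$ satisfy the cocycle relations of Proposition \ref{topembedding}. This holds because $g(\sigma)=\theta_{N_2}^{-1}\circ(f\circ\gamma_0,f\circ\Sigma_1\circ\sigma)$ is a genuine element of $\Gamma_\F(\gamma_0')$ for $\sigma\in\mathcal{W}$, so the transition identities follow from the same chart-change computation used in Proposition \ref{Gamma fon}. With the image condition confirmed, the equivalence from the second paragraph upgrades smoothness of $\Phi_{\gamma_0'}\circ g$ to smoothness of $g$, showing that $\F(M,f)$ is smooth at $\gamma_0$; as $\gamma_0\in\F(M,N_1)$ was arbitrary, $\F(M,f)$ is smooth. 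The delicate ingredients are thus the construction of the smooth local models $H_i$ and the bookkeeping ensuring the cocycle conditions; the substantive analytic input is Lemma \ref{F new}, which converts the pointwise pushforward by $f$ into a smooth operation at the level of the $\F$-spaces.
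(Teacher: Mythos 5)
Your proposal is correct and takes essentially the same route as the paper's own proof: the identical local representative $\theta_{N_2}^{-1}\circ\bigl(f\circ\gamma_0,\,f\circ\Sigma_1\circ\sigma\bigr)$, the identical local models $H_i$ made smooth at the level of function spaces by Lemma \ref{F new}, and the reduction to the product via the embedding $\Phi_{f\circ\gamma}$ of Proposition \ref{topembedding}. Your extra care about the openness of the domain $\mathcal{W}$ (via Remark \ref{Jgamma}) and about the image condition for $\Phi_{\gamma_0'}\circ g$ only makes explicit two points the paper leaves implicit, so no substantive difference remains.
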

\begin{proof}
The map is well defined by Lemma \ref{pre FM Acts}. Let $(\Omega_1,\Sigma_1)$ and $(\Omega_2,\Sigma_2)$ be the local addition for $N_1$ and $N_2$ respectively. We consider the charts $(\mathcal{U}_\gamma, \Psi_\gamma^{-1})$ and $(\mathcal{U}_{f\circ\gamma}, \Psi_{f\circ\gamma}^{-1})$ in $\gamma \in \F(M,N)$ and $f\circ \gamma \in \F(M,N)$ respectively. We define
\[ F(\sigma):=\Psi_{f\circ\gamma}^{-1} \circ \F(M,f) \circ \Psi_\gamma (\sigma)
= (\pi_{N},\Sigma_2)^{-1} \circ \Big(f\circ \gamma , f\circ \Sigma_1 \circ \sigma \Big) \]
for all $\sigma \in \Psi_\gamma^{-1}\left( \mathcal{U}_\gamma \cap \F(M,f)^{-1}(\mathcal{U}_{f\circ\gamma})\right)$.\\ 
We will proceed as in the proof of the Theorem \ref{main1}. For $i\in\{1,...,k\}$, let $\varphi_i:U_{M,i}\to V_{M,i}$ be charts of $M$, $W_{M,i}\in \mathcal{U}$ such that $W_{M,i}$ is relatively compact in $V_i$ with $M=\cup_{i=1}^k \varphi_i^{-1}(W_{M,i})$ and $\phi_{1,i} : U_{1,i}\to V_{1,i}$ and $\phi_{2,i} : U_{2,i}\to V_{2,i}$ charts of $N_1$ and $N_2$ respectively such that $\gamma\left( U_{M,i} \right)\subseteq U_{1,i}$ and $(f\circ \gamma) (U_{M,i})\subseteq U_{2,i}$. We will study the smoothness of the composition
\[  \Phi_{f\circ\xi} \circ F : \Psi_\xi^{-1}\left( \mathcal{U}_\gamma \cap \F(M,f)^{-1}(\mathcal{U}_{f\circ\gamma})\right) \to \text{Im}(\Phi_{f\circ\xi}),\quad \sigma \mapsto \left( d\phi_{2,i} \circ F(\sigma) \circ  \varphi_{M,i}^{-1}|_{W_{M,i}}  \right)_{i=1}^k \]
where $\Phi_{f\circ\xi}$ is the linear topological embedding as in Proposition \ref{topembedding}. Using Definition \ref{pre goodcollection} c), we find sets $W_{M,i}^\prime$ in $\mathcal{U}$ which are relatively compact in $U_{M,i}$ and contain $W_{M,i}$. For each $i\in\{1,...,k\}$ and $\sigma \in \Psi_\gamma^{-1}\left( \mathcal{U}_\gamma \cap \F(M,f)^{-1}(\mathcal{U}_{f\circ\gamma})\right)$ we have
\begin{align*}
     d\phi_{2,i} \circ F(\sigma) \circ  \varphi_{M,i}^{-1}|_{W_{M,i}^\prime} &= d\phi_{2,i} \circ(\pi_{N},\Sigma_2)^{-1} \circ \Big(f\circ \gamma , f\circ \Sigma_1 \circ \sigma \Big)  \circ  \varphi_{M,i}^{-1}|_{W_{M,i}^\prime}  \\
    &= d\phi_{2,i} \circ(\pi_{N},\Sigma_2)^{-1} \circ \Big(f\circ \gamma\circ  \varphi_{M,i}^{-1}|_{W_{M,i}^\prime} , f\circ \Sigma_1 \circ \sigma \circ  \varphi_{M,i}^{-1}|_{W_{M,i}^\prime}\Big).
\end{align*}
Since $(f\circ \gamma) (U_{M,i})\subseteq U_{2,i}$ we have
\[ f\circ \gamma\circ  \varphi_{M,i}^{-1}|_{W_{M,i}^\prime} = \phi_{2,i}^{-1}\circ \left( \phi_{2,i} \circ f\circ \gamma\circ  \varphi_{M,i}^{-1}|_{W_{M,i}^\prime } \right). \]
And since $\gamma\left( \varphi_{M,i}^{-1}(V_{M,i})\right)\subseteq U_{1,i}$ we have $\sigma\left(\varphi_{M,i}^{-1}(W_{M,i})\right) \subseteq  TU_{1,i}$ whence 
\begin{align*}
f\circ\Sigma_1 \circ \sigma\circ \varphi_{M,i}^{-1}|_{W_{M,i}^\prime} &= f\circ\Sigma_1\circ T\phi_{1,i}^{-1} \circ T\phi_{1,i} \circ \sigma\circ \varphi_{M,i}^{-1}|_{W_{M,i}^\prime} \\
&=f\circ\Sigma_1\circ T\phi_{1,i}^{-1}\left(\phi_{1,i} \circ \gamma\circ\varphi_{M,i}^{-1}|_{W_{M,i}^\prime} , d\phi_{1,i}\circ \sigma\circ \varphi_{M,i}^{-1}|_{W_{M,i}^\prime}\right).
\end{align*}  
Let 
\[ H_i(x,y,z):= d\phi_{2,i} \circ (\pi_{N},\Sigma_2)^{-1} \circ (\phi_{2,i}^{-1} \left( x \right),f\circ \Sigma_1\circ T\phi_{1,i}^{-1} \left( y , z \right) ). \]
Then $H_i$ is defined on an open subset of $\R^{n_2}\times \R^{n_1}\times \R^{n_1}$ and the $\R^{n_2}$-valued function $H_i$ so obtained is smooth (because it is a composition of smooth functions).\\
By Lemma \ref{F new}, also the corresponding mappings
\[ h_i:=\F(W_{M,i}^\prime / W_{M,i}, H_i) \]
between functions spaces are smooth. By the above, we have
\[   (\Phi_{f\circ \xi}\circ F)(\sigma) h_i\left(\phi_{2,i}\circ f\circ \gamma \circ \varphi_{M,i}^{-1}|_{W_{M,i}^\prime}, \phi_{1,i}\circ \gamma\circ \varphi_{M,i}^{-1}|_{W_{M,i}^{-1}}, d\phi_{1,i}\circ \sigma \circ \varphi_{M,i}^{-1}|_{W_{M,i}^{-1}} \right) \]
which is a smooth function of $\sigma$, using that
\[ \Gamma_\F(\gamma)\to \F(W_{M,i},\R^n),\quad \sigma\mapsto d\phi_{1,i}\circ \sigma\circ \varphi_{M,i}^{-1}|_{W_{M,i}}\] 
is a continuous linear map by definition. Therefore $\F(M,f)$ is smooth.
\end{proof}
\noindent
Applying Lemma \ref{pre FM analytic} we can obtain the analogous result.

\begin{coro}\label{prop fon analytic}
Let $\mathcal{U}$ be a good collection of open subsets of $[0,\infty)^m$ and $(\F(U,\R))_{U\in \mathcal{U}}$ be a family of locally convex spaces suitable for global analysis. Let $\mathbb{K}\in \{ \R,\mathbb{C}\}$, $M$ be an $m$-dimensional compact smooth manifold with corners, $N_1$ and $N_2$ be $n$-dimensional $\mathbb{K}$-analytic manifolds with $\mathbb{K}$-analytic local additions $(\Omega_1,\Sigma_1)$ and $(\Omega_2,\Sigma_2)$ respectively. If $f:N_1\to N_2$ is a $\mathbb{K}$-analytic map, then the map
\[ \F(M,f):\F(M,N_1)\to \F(M,N_2),\quad \gamma \mapsto f\circ \gamma ,\]
is $\mathbb{K}$-analytic.
\end{coro}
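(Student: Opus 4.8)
The plan is to run the proof of Proposition~\ref{Fpropfon} again, replacing ``smooth'' by ``$\mathbb{K}$-analytic'' at every step. First, $\F(M,f)$ is well defined by Lemma~\ref{pre FM Acts}, since a $\mathbb{K}$-analytic map is in particular smooth; and by the $\mathbb{K}$-analytic refinement of Theorem~\ref{main1} (the preceding corollary) both $\F(M,N_1)$ and $\F(M,N_2)$ carry $\mathbb{K}$-analytic manifold structures with the maps $\Psi_\gamma^{-1}$ as charts. As $\mathbb{K}$-analyticity is local, it suffices to fix $\gamma\in\F(M,N_1)$ and show that the chart representative
\[
F:=\Psi_{f\circ\gamma}^{-1}\circ \F(M,f)\circ\Psi_\gamma,\qquad
F(\sigma)=\theta_{N_2}^{-1}\circ\bigl(f\circ\gamma,\ f\circ\Sigma_1\circ\sigma\bigr),
\]
is $\mathbb{K}$-analytic near each point of its (open) domain.

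Next I would transcribe the chart computation of Proposition~\ref{Fpropfon} unchanged. Choosing charts $\varphi_i\colon U_{M,i}\to V_{M,i}$ of $M$ with $W_{M,i}\in\mathcal{U}$ relatively compact in $V_{M,i}$ and $M=\bigcup_i\varphi_i^{-1}(W_{M,i})$, charts $\phi_{1,i},\phi_{2,i}$ of $N_1,N_2$ adapted to $\gamma$ and $f\circ\gamma$, and $W_{M,i}'\in\mathcal{U}$ relatively compact in $U_{M,i}$ with $\overline{W_{M,i}}\subseteq W_{M,i}'$, composition with the embedding $\Phi_{f\circ\gamma}$ of Proposition~\ref{topembedding} reduces $\mathbb{K}$-analyticity of $F$ to that of each component, which equals
\[
h_i\bigl(\phi_{2,i}\circ f\circ\gamma\circ\varphi_{M,i}^{-1}|_{W_{M,i}'},\ \phi_{1,i}\circ\gamma\circ\varphi_{M,i}^{-1}|_{W_{M,i}'},\ d\phi_{1,i}\circ\sigma\circ\varphi_{M,i}^{-1}|_{W_{M,i}'}\bigr),
\]
with $h_i:=\F(W_{M,i}'/W_{M,i},H_i)$ and
\[
H_i(x,y,z):=d\phi_{2,i}\circ\theta_{N_2}^{-1}\circ\bigl(\phi_{2,i}^{-1}(x),\ f\circ\Sigma_1\circ T\phi_{1,i}^{-1}(y,z)\bigr).
\]
The single point that changes is the status of $H_i$: it is now a composition of $\mathbb{K}$-analytic maps, namely the analytic charts $\phi_{2,i}^{-1}$, $T\phi_{1,i}^{-1}$ and the differential $d\phi_{2,i}$, the analytic local addition $\Sigma_1$, the analytic map $f$, and the inverse $\theta_{N_2}^{-1}$ of the analytic diffeomorphism $\theta_{N_2}=(\pi_{TN_2},\Sigma_2)$ attached to a $\mathbb{K}$-analytic local addition; hence $H_i$ is $\mathbb{K}$-analytic on its open domain.

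It then remains to see that $h_i=\F(W_{M,i}'/W_{M,i},H_i)$ is $\mathbb{K}$-analytic; this is the step where Lemma~\ref{F new} must be upgraded to its $\mathbb{K}$-analytic form, for which I would invoke Lemma~\ref{pre FM analytic}. With that in hand, $\Phi_{f\circ\gamma}\circ F$ is the composite of $h_i$ with the continuous linear (hence $\mathbb{K}$-analytic) maps $\sigma\mapsto d\phi_{1,i}\circ\sigma\circ\varphi_{M,i}^{-1}|_{W_{M,i}'}$ from Proposition~\ref{topembedding}, so $F$, and therefore $\F(M,f)$, is $\mathbb{K}$-analytic.

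The step I expect to carry all the weight is the $\mathbb{K}$-analyticity of the function-space superposition $\F(W_{M,i}'/W_{M,i},H_i)$, i.e.\ the analytic analogue of Lemma~\ref{F new}. The smooth version was obtained by an induction on the order of differentiation using weak integrals and the mean value theorem, and this argument does not transfer verbatim to the analytic category; instead one must exhibit a locally convergent expansion of the superposition operator, most naturally by complexifying $H_i$ together with the spaces $\F(W_{M,i}',\cdot)$ and $\F(W_{M,i},\cdot)$ and checking that the induced operator extends holomorphically to a neighbourhood of each point, after which the analytic version of Lemma~\ref{FFclosed} lets one co-restrict into the closed image of $\Phi_{f\circ\gamma}$.
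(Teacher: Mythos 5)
Your proposal is correct and takes essentially the same route as the paper: the paper's entire proof of this corollary is the remark that one reruns the argument of Proposition \ref{Fpropfon} ``applying Lemma \ref{pre FM analytic}'' in place of the smooth superposition result of Lemma \ref{F new}, together with the analytic version of Lemma \ref{FFclosed} already cited for the corollary to Theorem \ref{main1}. Your chart-by-chart transcription, your identification of the $\mathbb{K}$-analyticity of $h_i=\F(W_{M,i}'/W_{M,i},H_i)$ as the step carrying all the weight, and your observation that this upgrade rests on a complexification argument rather than the weak-integral induction are exactly the details the paper leaves implicit.
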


\begin{re}
The manifold structures for $\F(M,N)$ given by different local additions are coincide. Indeed, since the identity map $\text{id}_N:N\to N$ is smooth, the map
\[ \F(M,\text{id}_N):\F(M,N) \to \F(M,N), \quad \gamma \to \text{id}_M \circ\gamma \]
is smooth regardless of the chosen local addition in each space. 
\end{re}

\begin{re}\label{inclusion} The inclusion map $J:\F(M,N) \to C([a,b],N)$ is smooth. Indeed, let $(\mathcal{U}_\gamma, \Psi_\gamma^{-1})$ and $(\mathcal{U}_{J(\gamma)}^C, \Psi_{J(\gamma)}^{-1})$ be charts in $\gamma\in \F(M,N)$ and $J(\gamma)\in C([a,b],N)$ respectively, then 
\[ \Psi_{J(\gamma)}^{-1} \circ J\circ \Psi_\gamma^{-1} (\sigma): \Psi_\gamma^{-1}\left(\mathcal{U}_\gamma\cap J^{-1}(\mathcal{U}_{j(\gamma)})\right)\subseteq \Gamma_\F (\gamma)\to \Gamma_C (\gamma)\] 
is a restriction of the inclusion map $\Gamma_\F (\gamma) \to \Gamma_C (\gamma)$.\\
Moreover, if $U\subseteq N$ is an open subset, then the manifold structure induced by $\F(M,N)$ on the open subset 
\[ \F(M,U):=\{ \gamma\in \F(M,N) : \gamma(M)\subseteq U \}. \]
coincides with the manifold structure on $\F(M,U)$.
\end{re}

\begin{prop}
Let $\mathcal{U}$ be a good collection of open subsets of $[0,\infty)^m$ and $(\F(U,\R))_{U\in \mathcal{U}}$ be a family of locally convex spaces suitable for global analysis. Let $M$ be a $m$-dimensional compact smooth manifold with corners, $N_1$ and $N_2$ be smooth manifolds which admit local additions, and let $\text{pr}_i:N_1\times N_2\to N_i$ be the i-th projection where $i\in \{1,2\}$, then the map
\[ \mathcal{P}:  \F(M,N_1 \times N_2)\to \F(M,N_1)\times \F(M,N_2),\quad \gamma \mapsto (\text{pr}_1,\text{pr}_2)\circ \gamma \]
is a diffeomorphism.
\end{prop}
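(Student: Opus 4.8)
The plan is to reduce the claim, via the charts of Theorem~\ref{main1}, to the linear splitting of section spaces already proved in Section~3. Two structural facts drive the argument: $N_1\times N_2$ carries a \emph{product} local addition, and a map into a product manifold is smooth precisely when its two components are.

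First I would record well-definedness, smoothness and bijectivity. The two components of $\mathcal{P}$ are $\F(M,\mathrm{pr}_1)$ and $\F(M,\mathrm{pr}_2)$; since $\mathrm{pr}_1,\mathrm{pr}_2$ are smooth, these are smooth by Proposition~\ref{Fpropfon}, and hence $\mathcal{P}$ is smooth, a map into a product being smooth iff each component is. On the level of sets, $\gamma\mapsto(\mathrm{pr}_1\circ\gamma,\mathrm{pr}_2\circ\gamma)$ is the canonical bijection between maps $M\to N_1\times N_2$ and pairs of maps; I would check it restricts to a bijection $\F(M,N_1\times N_2)\to\F(M,N_1)\times\F(M,N_2)$. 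The forward inclusion is Lemma~\ref{pre FM Acts} applied to the smooth maps $\mathrm{pr}_i$. For the converse, given $\gamma_i\in\F(M,N_i)$ and $\gamma:=(\gamma_1,\gamma_2)$, around each $p\in M$ I would pick an $M$-chart small enough that $\gamma$ maps its domain into a product chart $\phi_{N_1}\times\phi_{N_2}$; then $(\phi_{N_1}\times\phi_{N_2})\circ\gamma\circ\phi_M^{-1}$ has components $\phi_{N_i}\circ\gamma_i\circ\phi_M^{-1}$ lying in the relevant $\F$-spaces, so the pair lies in $\F(\,\cdot\,,\R^{n_1+n_2})$ by the isomorphism~(\ref{2 eq iso}) (as in Lemma~\ref{pre FExE FExFE}), giving $\gamma\in\F(M,N_1\times N_2)$.

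The core of the proof is the chart computation. Since the manifold structure on $\F(M,N_1\times N_2)$ does not depend on the chosen local addition, I would equip $N_1\times N_2$ with the product local addition $\Sigma:=\Sigma_1\times\Sigma_2$ on $\Omega:=\Omega_1\times\Omega_2$, under the canonical identification $T(N_1\times N_2)\cong TN_1\times TN_2$. Fixing $\gamma=(\gamma_1,\gamma_2)$ and using the chart $\Psi_\gamma\colon\mathcal V_\gamma\to\mathcal U_\gamma$ together with the product chart $\Psi_{\gamma_1}\times\Psi_{\gamma_2}$, I would write $\sigma=(\sigma_1,\sigma_2)$ with $\sigma_i=T\mathrm{pr}_i\circ\sigma\in\Gamma_\F(\gamma_i)$ and compute $\mathrm{pr}_i\circ\Sigma\circ\sigma=\Sigma_i\circ\sigma_i=\Psi_{\gamma_i}(\sigma_i)$, so that
\[
(\Psi_{\gamma_1}^{-1}\times\Psi_{\gamma_2}^{-1})\circ\mathcal{P}\circ\Psi_\gamma(\sigma)=(\sigma_1,\sigma_2)=(T\pi_1,T\pi_2)(\sigma).
\]
Thus in these charts $\mathcal{P}$ is exactly the map of the Section~3 proposition (the one immediately preceding Proposition~\ref{Fpropnog}) stating that $\Gamma_\F(\gamma_1\times\gamma_2)\to\Gamma_\F(\gamma_1)\times\Gamma_\F(\gamma_2)$, $\sigma\mapsto(T\pi_1,T\pi_2)(\sigma)$, is a linear homeomorphism. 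Because $\Omega=\Omega_1\times\Omega_2$, the condition $\sigma(M)\subseteq\Omega$ is equivalent to $\sigma_i(M)\subseteq\Omega_i$ for $i=1,2$, so this homeomorphism carries $\mathcal V_\gamma$ onto $\mathcal V_{\gamma_1}\times\mathcal V_{\gamma_2}$.

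Finally, a continuous linear homeomorphism between locally convex spaces is smooth with smooth inverse, so the local representative above is a diffeomorphism; hence $\mathcal{P}$ restricts to a diffeomorphism $\mathcal U_\gamma\to\mathcal U_{\gamma_1}\times\mathcal U_{\gamma_2}$. As $\gamma$ ranges over $\F(M,N_1\times N_2)$ these domains cover, so $\mathcal{P}$ is a local diffeomorphism; being also a bijection, it is a diffeomorphism. I expect the only delicate point to be the bookkeeping in the chart computation: verifying that the product local addition turns the local representative into $(T\pi_1,T\pi_2)$ and that the chart domains $\mathcal V_\gamma$ and $\mathcal V_{\gamma_1}\times\mathcal V_{\gamma_2}$ match under the identification $T(N_1\times N_2)\cong TN_1\times TN_2$, after which the linear-homeomorphism proposition from Section~3 finishes the argument.
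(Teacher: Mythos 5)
Your proposal is correct and follows essentially the same route as the paper: both equip $N_1\times N_2$ with the product local addition $\Sigma_1\times\Sigma_2$, get smoothness of $\mathcal{P}$ from the smoothness of $\F(M,\mathrm{pr}_i)$ (Proposition \ref{Fpropfon}), and reduce the inverse to the linear homeomorphism $\Gamma_\F(\gamma)\cong\Gamma_\F(\gamma_1)\times\Gamma_\F(\gamma_2)$ of the Section~3 proposition via the chart computation with $\Psi_\gamma$ and $\Psi_{\gamma_1}\times\Psi_{\gamma_2}$. The only difference is cosmetic (you compute the local representative of $\mathcal{P}$ and invoke local-diffeomorphism-plus-bijection, while the paper computes that of $\mathcal{P}^{-1}$ directly), and your explicit verification of set-level bijectivity is a point the paper leaves implicit.
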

\begin{proof}
If $(\Omega_1,\Sigma_1)$ and $(\Omega_1,\Sigma_1)$ are the local additions on $N_1$ and $N_2$ respectively, then we can assume that the local addition on $N_1\times N_2$ is 
\[ \Sigma:=\Sigma_1\times\Sigma_2: \Omega_1\times \Omega_2 \to N_1\times N_2\]
where $\Omega_1\times \Omega_2 \subseteq TN_1 \times TN_2 \cong T(N_1\times N_2)$.
The map $\mathcal{P}$ is smooth as consequence of the smoothness of the maps
\[\F(M,\text{pr}_j):\F(M,N_1 \times N_2)\to \F(M,N_i),\] 
for each ${i\in\{1,2\}}$ by the previous results. \\
Let $(\mathcal{U}_{\gamma}\times \mathcal{U}_{\gamma}, \Psi_{\gamma_1}^{-1}\times \Psi_{\gamma_2}^{-1})$ and $(\mathcal{U}_\gamma, \Psi_\gamma^{-1})$ be charts in  $(\gamma_1,\gamma_2) \in \F(M,N_1)\times \F(M,N_2)$ and $\mathcal{P}^{-1}(\gamma_1,\gamma_2)=\gamma \in \F(M,N_1\times N_2)$  respectively. Since the map
\[ \mathcal{Q}: \Gamma_{\F}(\gamma)\to \Gamma_{\F}(\gamma)\times \Gamma_{\F}(\gamma_2),\quad \tau \mapsto (\text{q}_1,\text{q}_2)\circ \tau \]
where $\text{q}_1$ and $\text{q}_2$ are the corresponding projection of the space, is an diffeomorphism of vector spaces (by Lemma 3.3 and Proposition \ref{topembedding}), we have 
\begin{align*}
  \Psi_\gamma^{-1} \circ \mathcal{P}^{-1}\circ  (\Psi_{\gamma_1}\times \Psi_{\gamma_2}) (\sigma_1,\sigma_2) 
  &= (\pi_{N_1\times N_2}, \Sigma)^{-1}\circ \left( \gamma, \mathcal{P}^{-1} \circ (\Sigma_1\times \Sigma_2)(\sigma_1,\sigma_2)\right)\\
  &= (\pi_{N_1\times N_2}, \Sigma)^{-1}\circ \left( \gamma, \Sigma\circ \mathcal{Q}^{-1}(\sigma_1,\sigma_2) \right)\\
  &= \mathcal{Q}^{-1}(\sigma_1,\sigma_2)
\end{align*}
for all  $(\sigma_1,\sigma_2) \in (\Psi_{\gamma_1}^{-1}\times \Psi_{\gamma_2}^{-1}) \left( \mathcal{U}_{\gamma_1}\times \mathcal{U}_{\gamma_2} \cap \mathcal{P}(\mathcal{U}_{\gamma})\right)$. Hence $\mathcal{P}^{-1}$ is smooth.
\end{proof}

\begin{prop}
Let $\mathcal{U}$ be a good collection of open subsets of $[0,\infty)^m$ and $(\F(U,\R))_{U\in \mathcal{U}}$ be a family of locally convex spaces suitable for global analysis. Let $M_1$ and $M_2$ be m-dimensional compact smooth manifolds with corners and $N$ be an $n$-dimensional smooth manifold which admits a local addition. If $\Theta:M_1\to M_2$ is a smooth diffeomorphism, then the map
\[ \F(\Theta,N):\F(M_2,N)\to F(M_1,N),\quad \gamma\mapsto \gamma \circ \Theta\]
is smooth.
\end{prop}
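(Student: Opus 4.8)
The plan is to reduce the smoothness of $\F(\Theta,N)$ to the continuous linear map $L_\Theta$ already constructed in Proposition \ref{Fpropnog}, working in the atlas from Theorem \ref{main1}. Well-definedness requires no work: Proposition \ref{Fpropnog} already yields $\gamma\circ\Theta\in\F(M_1,N)$ for each $\gamma\in\F(M_2,N)$, so $\F(\Theta,N)\colon\F(M_2,N)\to\F(M_1,N)$ is a genuine map.

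I would then fix $\gamma\in\F(M_2,N)$ and use the chart $(\mathcal{U}_\gamma,\Psi_\gamma^{-1})$ about $\gamma$ together with the chart $(\mathcal{U}_{\gamma\circ\Theta},\Psi_{\gamma\circ\Theta}^{-1})$ about $\F(\Theta,N)(\gamma)=\gamma\circ\Theta$. First I would check that $\F(\Theta,N)$ carries $\mathcal{U}_\gamma$ into $\mathcal{U}_{\gamma\circ\Theta}$: if $\xi\in\mathcal{U}_\gamma$ then $(\gamma,\xi)(M_2)\subseteq\Omega'$, and since $\Theta$ is a bijection, $(\gamma\circ\Theta,\xi\circ\Theta)(p)=(\gamma,\xi)(\Theta(p))\in\Omega'$ for every $p\in M_1$. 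Hence $\xi\circ\Theta\in\mathcal{U}_{\gamma\circ\Theta}$, the local representative is defined on all of $\mathcal{V}_\gamma=\Psi_\gamma^{-1}(\mathcal{U}_\gamma)$, and no shrinking of the chart domain is needed.

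The crux is to identify this local representative. For $\sigma\in\mathcal{V}_\gamma$ one has $\Psi_\gamma(\sigma)=\Sigma\circ\sigma$, so $\F(\Theta,N)(\Sigma\circ\sigma)=\Sigma\circ\sigma\circ\Theta=\Sigma\circ(\sigma\circ\Theta)$. Since $\Theta$ is surjective, $(\sigma\circ\Theta)(M_1)=\sigma(M_2)\subseteq\Omega$, so $\sigma\circ\Theta\in\mathcal{V}_{\gamma\circ\Theta}$ and $\Psi_{\gamma\circ\Theta}(\sigma\circ\Theta)=\Sigma\circ(\sigma\circ\Theta)$; therefore
\[ \Psi_{\gamma\circ\Theta}^{-1}\circ\F(\Theta,N)\circ\Psi_\gamma(\sigma)=\sigma\circ\Theta=L_\Theta(\sigma). \]
Thus the chart representation of $\F(\Theta,N)$ is exactly the restriction to the open set $\mathcal{V}_\gamma$ of the map $L_\Theta\colon\Gamma_\F(\gamma)\to\Gamma_\F(\gamma\circ\Theta)$.

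Finally, $L_\Theta$ is continuous and linear by Proposition \ref{Fpropnog}, hence smooth, and so is its restriction to $\mathcal{V}_\gamma$; as $\gamma$ was arbitrary, $\F(\Theta,N)$ is smooth. I expect no serious obstacle here: unlike the proofs of Theorem \ref{main1} and Proposition \ref{Fpropfon}, which must invoke Lemma \ref{F new} and the embedding $\Phi_\gamma$, the present statement follows because the passage through the two local additions cancels and leaves the bare pullback $\sigma\mapsto\sigma\circ\Theta$. The only point demanding care is the bookkeeping of the two section spaces (sections over $M_2$ versus over $M_1$) and the containment $\F(\Theta,N)(\mathcal{U}_\gamma)\subseteq\mathcal{U}_{\gamma\circ\Theta}$ established above.
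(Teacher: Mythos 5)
Your proposal is correct and follows essentially the same route as the paper: both identify the chart representative $\Psi_{\gamma\circ\Theta}^{-1}\circ\F(\Theta,N)\circ\Psi_\gamma$ as (a restriction of) the map $L_\Theta\colon\Gamma_\F(\gamma)\to\Gamma_\F(\gamma\circ\Theta)$, $\sigma\mapsto\sigma\circ\Theta$, and conclude smoothness from its continuity and linearity established in Proposition \ref{Fpropnog}. Your only addition is the explicit check that $\F(\Theta,N)(\mathcal{U}_\gamma)\subseteq\mathcal{U}_{\gamma\circ\Theta}$, a minor tidying of the domain bookkeeping that the paper leaves implicit by working on $\Psi_\gamma^{-1}\left(\mathcal{U}_\gamma\cap\F(\Theta,N)^{-1}(\mathcal{U}_{\gamma\circ\Theta})\right)$.
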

\begin{proof}
By Proposition \ref{Fpropnog} we know that the map is well defined. 
Let $(\mathcal{U}_\gamma, \Psi_\gamma^{-1})$ and $(\mathcal{U}_{\gamma\circ \Theta}, \Psi_{\gamma\circ \Theta}^{-1})$ be charts in $\gamma \in \F(M_2,N)$ and $\gamma\circ \Theta \in\F(M_1,N)$ respectively, then we have
\[ \Psi_{\gamma\circ \Theta}^{-1} \circ \F(\Theta,N) \circ \Psi_\gamma (\sigma) = 
 \theta_N^{-1}\circ (\gamma\circ \Theta, \Sigma \circ (\sigma\circ \Theta )) \]
for all $\sigma \in \Psi_\gamma^{-1}\left( \mathcal{U}_\gamma \cap \F(\Theta,N)^{-1}(\mathcal{U}_{\gamma\circ \Theta})\right)$. \\
Let $\alpha=\gamma\circ \Theta :M_1\to N$ and $\tau=\sigma\circ \Theta:M_2\to TN$, then $\tau \in \Gamma_{\F}(\alpha)$ and
\begin{align*}
\Psi_{\gamma\circ \Theta}^{-1} \circ \F(\Theta,N)\circ \Psi_\gamma (\sigma) &= 
 \theta_N^{-1}\circ (\alpha, \Sigma \circ \tau ) \\
 &= \Psi_{\alpha}^{-1}\circ \Psi_\alpha (\tau) \\
 &= \tau \\
 &= \sigma \circ \Theta.
\end{align*}
Hence, $\Psi_{\gamma\circ \Theta}^{-1} \circ\F(\Theta,N) \circ \Psi_\gamma$ is a restriction of the map 
\[ L_\Theta:{\Gamma_{\F}(\gamma)}\to {\Gamma_{\F}(\gamma\circ \Theta)},\quad \sigma\to \sigma\circ \Theta\] 
which is linear and continuous by Proposition \ref{Fpropnog}.
\end{proof}

\begin{prop}
Let $\mathcal{U}$ be a good collection of open subsets of $[0,\infty)^m$ and $(\F(U,\R))_{U\in \mathcal{U}}$ be a family of locally convex spaces suitable for global analysis. Let $M$ be an $m$-dimensional compact smooth manifold with corners. If $N$, $L$ and $K$ are smooth manifolds which admits local additions and
$f:L\times K\to N$ is a smooth map and $\gamma\in \F(M,L)$ is fixed, then
\[ f_* : \F(M,K) \to \F(M,N),\quad 	\xi\mapsto f\circ (\gamma,\xi)\]
is a smooth map.
\end{prop}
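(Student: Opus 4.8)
The plan is to realize $f_*$ as a composition of maps that are already known to be smooth, rather than unfolding it in charts directly. The key structural observation is that $f_*(\xi)=f\circ(\gamma,\xi)$ factors through the product manifold $L\times K$: writing $(\gamma,\xi)\colon M\to L\times K$, $p\mapsto(\gamma(p),\xi(p))$, we have $f_*(\xi)=\F(M,f)\big((\gamma,\xi)\big)$, where on the right $f$ is viewed as a smooth map $L\times K\to N$. Thus the task splits into two parts: showing that the ``insertion'' map $\iota_\gamma\colon\F(M,K)\to\F(M,L\times K)$, $\xi\mapsto(\gamma,\xi)$, is smooth, and then appealing to the superposition Proposition~\ref{Fpropfon}.

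For the second part, note that $L\times K$ admits a local addition, namely the product $\Sigma_1\times\Sigma_2$ of the given local additions on $L$ and $K$ (this is exactly the addition used in the product Proposition above); hence Proposition~\ref{Fpropfon} applies and $\F(M,f)\colon\F(M,L\times K)\to\F(M,N)$ is smooth. For the first part, I would use the product diffeomorphism $\mathcal{P}\colon\F(M,L\times K)\to\F(M,L)\times\F(M,K)$. Under this identification $\iota_\gamma$ corresponds to the map $\xi\mapsto(\gamma,\xi)\in\F(M,L)\times\F(M,K)$; that is, $\iota_\gamma=\mathcal{P}^{-1}\circ(c_\gamma,\mathrm{id})$, where $c_\gamma\colon\F(M,K)\to\F(M,L)$ is the constant map with value $\gamma$ and $\mathrm{id}$ is the identity of $\F(M,K)$. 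A constant map and the identity are smooth, and a map into a product manifold is smooth iff its two components are, so $(c_\gamma,\mathrm{id})$ is smooth; since $\mathcal{P}$ is a diffeomorphism, $\mathcal{P}^{-1}$ is smooth, and therefore $\iota_\gamma$ is smooth. In particular $(\gamma,\xi)$ genuinely lies in $\F(M,L\times K)$, since it equals $\mathcal{P}^{-1}(\gamma,\xi)$.

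Combining, $f_*=\F(M,f)\circ\iota_\gamma=\F(M,f)\circ\mathcal{P}^{-1}\circ(c_\gamma,\mathrm{id})$ is a composition of smooth maps and hence smooth; well-definedness (that $f\circ(\gamma,\xi)\in\F(M,N)$) already follows from Lemma~\ref{pre FM Acts}. I do not expect a serious obstacle here: the only points needing care are bookkeeping --- confirming that the product local addition endows $L\times K$ with the manifold structure for which $\mathcal{P}$ is a diffeomorphism and for which Proposition~\ref{Fpropfon} was proved, and checking that the chart-level identifications are consistent. The substantive analytic work has been absorbed into Proposition~\ref{Fpropfon} and the product Proposition, so no new estimates or $\F(U/W,\cdot)$-arguments are required.
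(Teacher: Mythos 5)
Your proposal is correct and is essentially the paper's own argument: the paper likewise writes $f_*=\F(M,f)\circ C_\gamma$, where $C_\gamma\colon\F(M,K)\to\F(M,L)\times\F(M,K)$, $\xi\mapsto(\gamma,\xi)$, is smooth, identifies $\F(M,L)\times\F(M,K)$ with $\F(M,L\times K)$ via the product proposition, and invokes Proposition~\ref{Fpropfon}. You merely make explicit the bookkeeping (the diffeomorphism $\mathcal{P}^{-1}$, the product local addition, and Lemma~\ref{pre FM Acts} for well-definedness) that the paper leaves implicit.
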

\begin{proof}
Define the smooth map
\[ C_\gamma: \F(M,K) \to \F(M,L) \times \F(M,K), \quad \xi\mapsto (\gamma,\xi).\]
Identifying  $\F(M,L) \times \F(M,K)$ with $\F(M,L\times K)$, we have 
\[f_* = \F(M,f)\circ C_\gamma.\] 
Hence $f_*$ is smooth.
\end{proof}

\begin{prop}\label{pointeval}
Let $\mathcal{U}$ be a good collection of open subsets of $[0,\infty)^m$ and $(\F(U,\R))_{U\in \mathcal{U}}$ be a family of locally convex spaces suitable for global analysis. Let $M$ be an $m$-dimensional compact smooth manifold with corners and $N$ be a $n$-dimensional smooth manifold. Then the evaluation map
\[\varepsilon:\F(M,N) \times M \to N,\quad	 (\gamma,p)\mapsto \gamma(p)\] 
is continuous. Moreover, for each $p\in M$, the point evaluation map 
\[\varepsilon_p:\F(M,N) \to N,\quad \gamma\mapsto \gamma(p)\] 
is smooth.
\end{prop}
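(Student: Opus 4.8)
The plan is to derive both assertions from the corresponding facts for the manifold $C(M,N)$ of continuous maps, together with the smoothness of the point evaluations on the section spaces $\Gamma_\F(\gamma)$ already established in Proposition \ref{Fgammaeval}. Recall from Remark \ref{Frecharts} that, $M$ being compact, $C(M,N)=BC(M,N)$ carries the manifold structure produced by the general construction (Theorem \ref{main1}) applied to the continuous case, and its underlying topology is the compact-open topology (as recorded in the proof of Theorem \ref{main1}); moreover the inclusion $J\colon \F(M,N)\to C(M,N)$ is smooth by Remark \ref{inclusion}.

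For the continuity of $\varepsilon$, I would factor
\[
\varepsilon=\tilde{\varepsilon}\circ (J\times \mathrm{id}_M),
\]
where $\tilde{\varepsilon}\colon C(M,N)\times M\to N$, $(\gamma,p)\mapsto \gamma(p)$ is the evaluation map on continuous maps. Here $J\times \mathrm{id}_M$ is continuous since $J$ is, while $\tilde{\varepsilon}$ is continuous because $M$ is compact (hence locally compact) Hausdorff and $C(M,N)$ carries the compact-open topology (see also \cite{AGS}). Being a composition of continuous maps, $\varepsilon$ is continuous.

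For the smoothness of $\varepsilon_p$, I would argue locally. Fix $\gamma_0\in \F(M,N)$ and use the chart $\Psi_{\gamma_0}\colon \mathcal{V}_{\gamma_0}\to \mathcal{U}_{\gamma_0}$ of Remark \ref{Frecharts}. For $\sigma\in \mathcal{V}_{\gamma_0}$ we have $\sigma(M)\subseteq \Omega$, so $\sigma(p)\in \Omega$ and
\[
\varepsilon_p\big(\Psi_{\gamma_0}(\sigma)\big)=(\Sigma\circ \sigma)(p)=\Sigma\big(\sigma(p)\big)=\Sigma\big(\epsilon_p(\sigma)\big),
\]
where $\epsilon_p\colon \Gamma_\F(\gamma_0)\to TN$, $\sigma\mapsto \sigma(p)$ is the point evaluation on sections. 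By Proposition \ref{Fgammaeval} the map $\epsilon_p$ is smooth, and $\Sigma\colon \Omega\to N$ is smooth by definition of the local addition; hence $\varepsilon_p\circ \Psi_{\gamma_0}=\Sigma\circ \epsilon_p|_{\mathcal{V}_{\gamma_0}}$ is smooth as a composition of smooth maps into the manifold $N$. Since $\Psi_{\gamma_0}$ is a chart and $\gamma_0$ was arbitrary, $\varepsilon_p$ is smooth. Alternatively, one may factor $\varepsilon_p=\tilde{\varepsilon}_p\circ J$ through the smooth point evaluation $\tilde{\varepsilon}_p\colon C(M,N)\to N$ of \cite{AGS}.

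The computations above are routine once the bookkeeping is set up, so I do not expect a serious obstacle; the only points demanding care are verifying that $\Sigma(\sigma(p))$ is well defined for $\sigma\in \mathcal{V}_{\gamma_0}$ (which is exactly the condition $\sigma(M)\subseteq \Omega$ defining $\mathcal{V}_{\gamma_0}$) and that reading $\varepsilon_p$ in the chart $\Psi_{\gamma_0}^{-1}$ amounts to precomposing with $\Psi_{\gamma_0}$, so that the displayed identity indeed expresses $\varepsilon_p$ locally as $\Sigma\circ \epsilon_p$. For the continuity part, the corresponding care is to ensure that the compact-open evaluation $\tilde{\varepsilon}$ is continuous, for which compactness of $M$ is exactly what is needed.
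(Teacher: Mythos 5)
Your proposal is correct, and for the smoothness of $\varepsilon_p$ it takes a genuinely different route from the paper. The paper's proof is a two-line factorization through the manifold $C(M,N)$: it invokes the fact (from \cite{AGS}) that the evaluation $\varepsilon_c\colon C(M,N)\times M\to N$ is $C^{\infty,0}$ with smooth point evaluations $(\varepsilon_c)_p$, and then writes $\varepsilon=\varepsilon_c\circ (J,\mathrm{Id}_M)$ and $\varepsilon_p=(\varepsilon_c)_p\circ J$, using the smoothness of the inclusion $J$ from Remark \ref{inclusion}; this is exactly the "alternative" you mention in passing at the end. Your primary argument instead works locally in the charts of Theorem \ref{main1}: you verify the identity $\varepsilon_p\circ \Psi_{\gamma_0}=\Sigma\circ \epsilon_p|_{\mathcal{V}_{\gamma_0}}$ and conclude by combining Proposition \ref{Fgammaeval} (smoothness of the point evaluation $\epsilon_p$ on $\Gamma_\F(\gamma_0)$, whose image on $\mathcal{V}_{\gamma_0}$ lies in the open set $\Omega$) with the smoothness of the local addition $\Sigma$. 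Both arguments are sound. The paper's factorization is shorter but leans on the manifold structure of $C(M,N)$ and the smoothness of its point evaluations, plus the smoothness of $J$; your chart computation needs only the continuity of $J$ (for the continuity of $\varepsilon$), stays entirely within the paper's own chart framework, and has the side benefit of making the local form of $\varepsilon_p$ explicit, which is precisely what is exploited later when computing $T\varepsilon_p$ and the bundle isomorphism $\Theta_N$. Your continuity argument for $\varepsilon$ coincides with the paper's.
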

\begin{proof}
The evaluation map
\[ \varepsilon_c:C(M,N)\times M\to N, \quad (\gamma,p)\mapsto \gamma(p))\] 
is $C^{\infty,0}$  with point evaluation $(\varepsilon_c)_p:C(M,N)\to N$, $\gamma\mapsto \gamma (p)$ smooth for each $p\in M$. Since the inclusion map $J:\F(M,N)\to C(M,N)$ is smooth, we have $\varepsilon=\varepsilon_c\circ (J,\text{Id}_M)$ and $\varepsilon_p=(\varepsilon_c)_p \circ J$ for each $p\in M$.
\end{proof}

\begin{prop}
Let $\mathcal{U}$ be a good collection of open subsets of $[0,\infty)^m$ and $(\F(U,\R))_{U\in \mathcal{U}}$ be a family of locally convex spaces suitable for global analysis. Let $M$ be a $m$-dimensional compact smooth manifold with corners and $N$ be a $n$-dimensional smooth manifold with local addition. Then, for each $q\in N$, the function $\zeta_q:M\to N$, $p\mapsto q$ is in $\F(M,N)$ and the map
\[ \zeta:N\to \F(M,N), \quad q\mapsto \zeta_q \]
is a smooth topological embedding.
\end{prop}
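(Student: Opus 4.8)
The plan is to treat the three assertions in turn, reducing smoothness to a single chart computation and the embedding property to the existence of a continuous left inverse.

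First I would check that $\zeta_q\in\F(M,N)$. Constant functions lie in every space $\F(V,F)$: applying the Pushforward Axiom (PF) to the zero section $0\in\F(U,E)$ and to the constant smooth map $f\colon U\times E\to F$, $f(x,y)=c$, gives $f_*(0)=\mathrm{const}_c\in\F(V,F)$ for each $V\in\mathcal U$ relatively compact in $U$. Hence, reading $\zeta_q$ in any pair of charts $\varphi\colon U_M\to V_M$ of $M$ and $\phi$ of $N$ with $q\in U_\phi$, the local representative $\phi\circ\zeta_q\circ\varphi^{-1}=\mathrm{const}_{\phi(q)}$ restricts to an element of $\F(V,\Rn)$ on every relatively compact $V$, so it lies in $\F_{\mathrm{loc}}(V_M,\Rn)$ and $\zeta_q\in\F(M,N)$. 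The same argument shows that, for fixed $q_0$, the constant sections $p\mapsto v$ with $v\in T_{q_0}N$ belong to $\Gamma_\F(\zeta_{q_0})$.

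For smoothness I would fix $q_0\in N$ and work in the chart $(\mathcal U_{\zeta_{q_0}},\Psi_{\zeta_{q_0}}^{-1})$ around $\zeta_{q_0}$ provided by Theorem \ref{main1}. Since $\Omega'$ is open and contains the diagonal (because $\theta_N(0_{q_0})=(q_0,q_0)$), there is a neighborhood $W\ni q_0$ in $N$ with $(q_0,q)\in\Omega'$ for all $q\in W$; as $(\zeta_{q_0},\zeta_q)(M)=\{(q_0,q)\}$, this yields $\zeta_q\in\mathcal U_{\zeta_{q_0}}$ for $q\in W$. In this chart one computes
\[
\Psi_{\zeta_{q_0}}^{-1}(\zeta_q)=\theta_N^{-1}\circ(\zeta_{q_0},\zeta_q)=\mathrm{const}_{g(q)},\qquad g(q):=\theta_N^{-1}(q_0,q)\in T_{q_0}N,
\]
the constant section with value $g(q)$. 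The map $g\colon W\to T_{q_0}N$ is smooth, being $\theta_N^{-1}$ precomposed with $q\mapsto(q_0,q)$. The constant-section inclusion $c\colon T_{q_0}N\to\Gamma_\F(\zeta_{q_0})$, $v\mapsto\mathrm{const}_v$, is linear and its domain $T_{q_0}N\cong\Rn$ is finite-dimensional, hence $c$ is automatically continuous, thus smooth. Therefore $\Psi_{\zeta_{q_0}}^{-1}\circ\zeta=c\circ g$ is smooth on $W$, and since $q_0$ was arbitrary, $\zeta$ is smooth.

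Finally, $\zeta$ is a topological embedding because it admits a continuous left inverse: for any fixed $p_0\in M$, Proposition \ref{pointeval} gives that $\varepsilon_{p_0}\colon\F(M,N)\to N$ is smooth, and $\varepsilon_{p_0}\circ\zeta=\mathrm{id}_N$ since $\varepsilon_{p_0}(\zeta_q)=\zeta_q(p_0)=q$. In particular $\zeta$ is injective, and the inverse of its corestriction $\zeta\colon N\to\zeta(N)$ is $\varepsilon_{p_0}|_{\zeta(N)}$, which is continuous, so $\zeta$ is a homeomorphism onto its image. The only step requiring genuine care is the chart computation identifying $\Psi_{\zeta_{q_0}}^{-1}\circ\zeta$ with the path $q\mapsto\mathrm{const}_{g(q)}$ of constant sections; once this is seen, continuity of the inclusion $c$ comes for free from finite-dimensionality of the fiber, and the rest is bookkeeping with the already-established smoothness of the charts and of $\varepsilon_{p_0}$.
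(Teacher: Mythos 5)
Your proof is correct and takes essentially the same route as the paper: constants lie in $\F$-spaces via the pushforward axiom, the chart representative of $\zeta$ near $\zeta_{q_0}$ is the constant-section map composed with the smooth map $q\mapsto\theta_N^{-1}(q_0,q)$, and the point evaluation $\varepsilon_{p_0}$ serves as a continuous left inverse giving the embedding property. The only cosmetic differences are that the paper factors the constant-section map through $\F(M,T_{q_0}N)$ with its continuous inclusion into $\Gamma_\F(\zeta_{q_0})$ and reads $\zeta$ through a chart of $N$ manufactured from $\Sigma$ itself (making the representative literally linear), whereas you map directly into $\Gamma_\F(\zeta_{q_0})$ and rely on the same finite-dimensionality argument for continuity.
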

\begin{proof}
If $W\in \mathcal{U}$ is relatively compact
and $z\in \R^n$, consider the constant function
\[ c_z\colon W\to\R^n,\quad x\mapsto z.\]
Then $c_z\in \F(W,\R^n)$. In fact, Definition \ref{pre goodcollection} c)
provides $V\in \mathcal{U}$ such that
$\overline{W}\subseteq V$. Then
$\eta\colon V\to \R^n$, $x\mapsto 0$
is in $\F(V,\R^n)$.
The map $f\colon V\times\R^n\to\R^n$, $(x,y)\mapsto z$
is smooth, whence $c_z=f\circ (\text{id}_W,\eta|_W)\in \F(W,\R^n)$
by the pushforward axiom.\\
For each $z\in N$, the constant function
\[ \zeta_z\colon M\to N,\quad p\mapsto z\] 
is in $\F(M,N)$. In fact, if $p\in M$, $\phi_M\colon U_M\to V_M$
is chart for~$M$ around~$p$ and $\phi_N\colon U_N\to V_N$
a chart for~$N$ around $\zeta_z(p)=z$,
then Definition \ref{pre goodcollection} c) provides a relatively
compact $\phi_M(p)$-neighborhood $W\subseteq V_M$
such that $W\in \mathcal{U}$. After replacing
$\phi_M$ with its restriction to a map $\phi_M^{-1}(W)\to W$,
we may assume that $V_M\in \mathcal{U}$ and $V_M$
is relatively compact. Now
$\phi_N\circ \zeta_z\circ \phi_M^{-1}$ is the constant function
$W\to\R^n$, $x\mapsto \phi_N(z)$, which is in $\F(W,\R^n)$
as observed above. Thus $\zeta_z\in \F(M,N)$. In particular, for each $y\in N$ and $z\in T_yN$, the constant function 
\[ C_z\colon M\to T_yN,\quad v\mapsto z\]
is an element of $\F(M,T_yN)$. Since $T_yN$ is a finite-dimensional vector space,
the linear map
\begin{equation}\label{returnto}
C\colon T_yN\to \F(M,T_yN),\quad z\mapsto C_z
\end{equation}
is continuous. Given $y\in N$, consider the constant
function $\zeta_y\colon M\to N$, $p\mapsto y$, we define the vector space
\[ \Gamma_\F(\zeta_y):= \{\tau\in \F(M,TN): (\forall p\in M)~ \tau(p)\in T_{\zeta_y(p)}N=T_yN \}. \]
We show that
\[
\F(M,T_yN)\subseteq \Gamma_\F(\zeta_y)
\]
with continuous linear inclusion map.
The inclusion map $\iota\colon T_yN\to TN$
being smooth, for each $\tau\in \F(M,T_yN)$ we get
\[
\tau=\iota\circ \tau =\F(M,\iota)(\tau)\in \F(M,TN)
\]
by Lemma \ref{pre FM Acts}. Moreover, $\F(M,\iota)$ (and hence
also its co-restriction~$j$ to $\Gamma_\F(\zeta_y)$)
is continuous, by Proposition \ref{Fpropfon}. \\
Let $\Sigma\colon \Omega\to N$ be a local addition
for~$N$ and notation as in Definition \ref{localaddition}
and Remark \ref{Frecharts}.
We have $V\subseteq \Omega$ for an open
$0$-neighborhood $V\subseteq T_yN$.
Then $U_N:=\Sigma(V)$ is an open $y$-neighborhood
in~$N$ and $\psi:=\Sigma|_V^{U_N}\colon V\to U_N$
is a $C^\infty$-diffeomorphism with
\[ \psi^{-1}(u)=\theta_N^{-1}(y,u)\] 
for $u\in U_N$. If $\alpha\colon T_yN\to \R^n$ is an isomorphism
of vector spaces, then $V_N:=\alpha(V)$ is open in
$\R^n$ and
$\phi_N(u):=\alpha(\psi^{-1}(u))$ defines a chart
$\phi_N\colon U_N\to V_N$ of~$N$.
For each $v\in V_N$,
we have for each $q\in M$
\[
(\zeta_y(q),\zeta_{\phi_N^{-1}(v)}(q))=(y,\phi_N^{-1}(v))=(y,\psi(\alpha^{-1}(v)))
\in \{y\}\times
U_N\subseteq \Omega'
\]
with 
\[ \theta^{-1}_N(y,\psi(\alpha^{-1}(v)))=\psi^{-1}(\psi(\alpha^{-1}(v)))=\alpha^{-1}(v).\] 
Thus $\zeta_{\phi_N^{-1}(v)}\in \mathcal{U}_{\zeta_y}$
and
\[
\Psi_{\zeta_y}^{-1}(\zeta_{\phi_N^{-1}(v)})=\theta_N^{-1}\circ \left(\zeta_y,\zeta_{\phi_N^{-1}(v)}\right)
\]
is the constant function $C_{\alpha^{-1}(v)}$. Hence
\[
\Psi_{\zeta_y}^{-1}\circ \zeta \circ \phi_N^{-1}=j\circ C\circ \alpha^{-1}|_{V_N},
\]
which is a smooth function. Thus $\zeta$ is smooth.\\
Fix $p\in M$. The point evaluation
$\varepsilon_p\colon \F(M,N)\to N$, $\gamma\mapsto \gamma(p)$
is smooth and hence continuous.
Since $\varepsilon_p\circ \zeta=\text{id}_N$,
we deduce that $(\zeta|^{\zeta(N)})^{-1}=\varepsilon_p|_{\zeta(N)}$
is continuous.
Thus $\zeta$ is a homeomorphism onto its image.
\end{proof}

\begin{re}
Let $\mathcal{U}$ be a good collection of open subsets of $[0,\infty)^m$ and $(\F(U,\R))_{U\in \mathcal{U}}$ be a family of locally convex spaces suitable for global analysis. Let $M$ be an $m$-dimensional compact smooth manifold with corners, $N$ be an $n$-dimensional smooth manifold which admits a local addition and let $T\F(M,N)$ be the tangent bundle of $\F(M,N)$. Since the point evaluation map $\varepsilon_p:\F(M,N)\to N$ is smooth for each $p\in M$, we have
\[ T\varepsilon_p : T\F(M,N) \to TN.\] 
For each $v\in T\F(M,N)$ we define the function
\[ \Theta_N(v) : M \to TN,\quad \Theta_N(v)(p) = T\varepsilon_p (v).\]
\end{re}
\begin{prop}
Let $\mathcal{U}$ be a good collection of open subsets of $[0,\infty)^m$ and $(\F(U,\R))_{U\in \mathcal{U}}$ be a family of locally convex spaces suitable for global analysis. Let $M$ be an $m$-dimensional compact smooth manifold with corners, $N$ be an $n$-dimensional smooth manifold which admits a local addition and $\gamma\in \F(M,N)$. Then $\Theta_N(v)\in \Gamma_{\F}(\gamma)$ for each $v\in T_\gamma \F(M,N)$ and the map
\[ \Theta_\gamma : T_\gamma \F(M,N)\to \Gamma_{\F}(\gamma),\quad v\mapsto \Theta_\gamma(v):=\Theta_N|_{T_\gamma \F(M,N)}(v) \]
is an isomorphism of topological vector spaces.
\end{prop}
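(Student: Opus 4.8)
The plan is to show that $\Theta_\gamma$ is nothing but the identification of $T_\gamma\F(M,N)$ with its modeling space $\Gamma_\F(\gamma)$ furnished by the chart $\Psi_\gamma$ of Theorem \ref{main1}, so that the isomorphism property comes for free. First I record that, because $\Sigma(0_p)=p$, the chart value at $\gamma$ is $\Psi_\gamma^{-1}(\gamma)=\theta_N^{-1}(\gamma,\gamma)=0_\gamma$, the zero section, which is the zero element of the locally convex space $\Gamma_\F(\gamma)$. Since $\mathcal{V}_\gamma$ is open in $\Gamma_\F(\gamma)$, I identify $T_{0_\gamma}\mathcal{V}_\gamma$ with $\Gamma_\F(\gamma)$; the tangent map $T_\gamma\Psi_\gamma^{-1}\colon T_\gamma\F(M,N)\to\Gamma_\F(\gamma)$ of the diffeomorphism $\Psi_\gamma^{-1}$ is then a topological isomorphism, with inverse $T_{0_\gamma}\Psi_\gamma$. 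The goal is to prove $\Theta_\gamma=T_\gamma\Psi_\gamma^{-1}$.

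To this end I fix $\sigma\in\Gamma_\F(\gamma)$, put $v:=T_{0_\gamma}\Psi_\gamma(\sigma)\in T_\gamma\F(M,N)$, and compute $\Theta_N(v)(p)=T\varepsilon_p(v)$ for $p\in M$. The key observation is that, for $\tau\in\mathcal{V}_\gamma$, one has $(\varepsilon_p\circ\Psi_\gamma)(\tau)=\Sigma(\tau(p))=\Sigma(\epsilon_p(\tau))$, where $\epsilon_p\colon\Gamma_\F(\gamma)\to TN$, $\tau\mapsto\tau(p)$ is the point evaluation of Proposition \ref{Fgammaeval}, whose corestriction to the fiber $T_{\gamma(p)}N$ is continuous linear. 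Applying the chain rule to $\varepsilon_p\circ\Psi_\gamma=\Sigma\circ\epsilon_p$ at $0_\gamma$, and using $\epsilon_p(0_\gamma)=0_{\gamma(p)}$ together with $T\epsilon_p|_{0_\gamma}(\sigma)=\sigma(p)$ (the derivative of a continuous linear map into $T_{\gamma(p)}N$ at the origin being that map itself), I obtain
\[
\Theta_N(v)(p)=T(\varepsilon_p\circ\Psi_\gamma)\big|_{0_\gamma}(\sigma)=T\Sigma\big|_{0_{\gamma(p)}}\big(\sigma(p)\big).
\]

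Here the normalization of the local addition enters decisively: by Remark \ref{locallemma} we have $T_{0_q}(\Sigma|_{T_qN})=\mathrm{id}_{T_qN}$ for every $q\in N$, and $\sigma(p)\in T_{\gamma(p)}N$ is exactly a vertical tangent vector at $0_{\gamma(p)}$, so under the canonical identification $T_{0_{\gamma(p)}}(T_{\gamma(p)}N)\cong T_{\gamma(p)}N$ the right-hand side is simply $\sigma(p)$. Thus $\Theta_N(v)(p)=\sigma(p)$ for all $p$, that is $\Theta_N(T_{0_\gamma}\Psi_\gamma(\sigma))=\sigma$, which reads $\Theta_\gamma=T_\gamma\Psi_\gamma^{-1}$. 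This single identity yields both assertions: it shows $\Theta_N(v)\in\Gamma_\F(\gamma)$ for every $v\in T_\gamma\F(M,N)$ and exhibits $\Theta_\gamma$ as a topological isomorphism. I expect the only delicate point to be the bookkeeping in this last step, namely correctly viewing $\sigma(p)$ as a vertical vector at $0_{\gamma(p)}$ so that the normalization cancels $T\Sigma$; the remaining steps are routine, resting on the smoothness of $\varepsilon_p$ (Proposition \ref{pointeval}) and of $\Psi_\gamma$ (Theorem \ref{main1}).
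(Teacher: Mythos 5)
Your proof is correct and takes essentially the same route as the paper's: both identify $T_\gamma \F(M,N)$ with $\Gamma_\F(\gamma)$ via the tangent map of the chart $\Psi_\gamma$ at the zero section and then verify $\Theta_N \circ T_0\Psi_\gamma = \mathrm{id}$ pointwise, with the normalization $T_{0_q}(\Sigma|_{T_qN}) = \mathrm{id}_{T_qN}$ doing the decisive work at exactly the same spot. The only difference is bookkeeping: the paper computes $T\varepsilon_p \circ T\Psi_\gamma$ on the curve $[s\mapsto s\sigma]$, while you apply the chain rule to the factorization $\varepsilon_p\circ\Psi_\gamma = \Sigma\circ\epsilon_p$ and use linearity of the fiberwise evaluation $\epsilon_p$ --- the same computation in different notation.
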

\begin{proof}
    Let $\Sigma:\Omega \to N$ be a normalized local addition of $N$ in sense of \cite{AGS}. Since $\Gamma_{\F}(\gamma)$ is a vector space, we identify its tangent bundle with $\Gamma_{\F}(\gamma)\times \Gamma_{\F}(\gamma)$. Let $\Psi_\gamma :\mathcal{V}_\gamma \to \mathcal{U}_\gamma$ be a chart around $\gamma$ such that $\Psi_\gamma (0) = \gamma$, then
    \[ T\Psi_\gamma : T\mathcal{V}_\gamma  \simeq \mathcal{V}_\gamma\times \Gamma_{\F}(\gamma) \to T \F(M,N) \]
    is a diffeomorphism onto its image. Moreover, 
    \[ T_0\Psi_\gamma : \{0\}\times \Gamma_{\F}(\gamma) \to T_\gamma \F(M,N) \]
    is an isomorphism of topological vector spaces. We will show that 
    \[ \Theta_\gamma \circ T\Psi_\gamma (0,\sigma) = \sigma \]
    for each $\sigma\in \Gamma_{\F}(\gamma)$. Which is equivalent to show that
    \[ T\varepsilon_p \circ T\Psi_\gamma (0,\sigma) = \sigma(p) \quad \text{ for all }p\in M.\]
    Working with the geometric point of view of tangent vectors, we see that $(0,\sigma)$ is equivalent to the curve $[s\mapsto s\sigma]$. Hence, for each $p\in M$ we have
    \begin{align*}
        T\varepsilon_p \circ T\Psi_\gamma (0,\sigma) &= T\varepsilon_p \circ T\Psi_\gamma ([s\mapsto s\sigma ]) \\
        &= T\varepsilon_p ([s\mapsto \Psi_\gamma(s\sigma)]) \\
        &= T\varepsilon_p ([s\mapsto \Sigma (s\sigma)])\\
        &=[s\mapsto \Sigma|_{T_{\gamma(p)}N} (s\sigma(t))] \\
        &= T_0\Sigma|_{T_{\gamma(p)}N} ([s\mapsto s\sigma(t)]).
    \end{align*}
Since $\Sigma$ is normalized we have $T_0\Sigma|_{T_{\gamma(p)}N}=\text{id}_{T_{\gamma(p)}N}$ and
\[ T\varepsilon_p \circ T\Psi_\gamma (0,\sigma) = \sigma (p). \]
In consequence, for each $\sigma\in \Gamma_{\F}(\gamma)$, there exists a $v\in T_\gamma \F(M,N)$ with $v=T\Psi_\gamma (0,\sigma)$ such that
\[ \Theta_\gamma (v) = \sigma.\] Moreover, the function
\[ \Theta_\gamma(v):M\to TN,\quad p\mapsto  \Theta_N(v)(p) = \sigma(p)\in T_{\gamma(p)}N \]
is in $\F(M,TN)$ with $\pi_{N}\circ \Theta_\gamma(v) =\gamma$, making the map $\Theta_\gamma$ an isomorphism of topological vector spaces. 
\end{proof}

\noindent
Following other examples of manifolds of mappings, such as the case of $C^\ell$-maps (with $\ell \geq 0$) from a compact manifold (possibly with rough boundary) to a smooth manifold which admits local addition (see e.g. \cite{AGS}), we well study the tangent bundle of $\F(M,N)$.

\begin{re}
Let $\mathcal{U}$ be a good collection of open subsets of $[0,\infty)^m$ and $(\F(U,\R))_{U\in \mathcal{U}}$ be a family of locally convex spaces suitable for global analysis. Let $M$ be an $m$-dimensional compact smooth manifold with corners and $N$ be an $n$-dimensional smooth manifold which admits a local addition. Since $TN$ admits local addition and the vector bundle $\pi_{N}:TN\to N$ is smooth, the map
\[ \F(M,\pi_{N}):\F(M,TN)\to \F(M,N),\quad \tau \mapsto \pi_{N}\circ \tau\]
is smooth. Moreover, if $\gamma \in \F(M,N)$, then
\[ \F(M,\pi_{N})^{-1}(\{\gamma\}) = \Gamma_{\F}(\gamma).\]
The following result follows the same steps as for the case of $C^\ell$-maps (with $\ell \geq 0$) from a compact manifold (possibly with rough boundary) to a smooth manifold which admits local addition \cite[Theorem A.12]{AGS}.
\end{re}

\begin{prop}
Let $\mathcal{U}$ be a good collection of open subsets of $[0,\infty)^m$ and $(\F(U,\R))_{U\in \mathcal{U}}$ be a family of locally convex spaces suitable for global analysis. Let $M$ be an $m$-dimensional compact smooth manifold with corners, $N$ be an $n$-dimensional smooth manifold which admits a local addition and $\pi_{N}:TN\to N$ its tangent bundle. Then the map
\[ \F(M,\pi_{N}):\F(M,TN)\to \F(M,N),\quad \tau \mapsto \pi_{N}\circ \tau\]
is a smooth vector bundle with fiber $\Gamma_{\F}(\gamma)$ over $\gamma\in \F(M,N)$. Moreover, the map
\[ \Theta_N:T\F(M,N)\to F(M,TN), \quad v \mapsto \Theta_N(v) \]
is an isomorphism of vector bundles.
\end{prop}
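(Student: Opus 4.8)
The plan is to follow the strategy of \cite[Theorem A.12]{AGS}: first equip $\F(M,\pi_{TN})$ with explicit bundle charts over the domains $\mathcal{U}_\gamma$, and then identify $\Theta_N$ with the resulting trivializations. Since $\F(M,\pi_{TN})$ is already smooth and surjective with $\F(M,\pi_{TN})^{-1}(\{\gamma\})=\Gamma_\F(\gamma)$ a locally convex space, it only remains to produce local trivializations whose transition maps are fibrewise continuous-linear, so that the fibres assemble into a smooth vector bundle.

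First I would manufacture a single smooth fibre-linear transport map from the local addition. Writing $\rho(q,q'):=\theta_N^{-1}(q,q')\in T_qN$, for fixed $q$ the partial map $\Phi_q:=\rho(q,\cdot)$ is a diffeomorphism from an open $q$-neighbourhood onto an open $0_q$-neighbourhood of $T_qN$ with $\Phi_q(q)=0_q$ and, as the local addition is normalized, $T_q\Phi_q=\mathrm{id}$. Using the canonical identification $T_x(T_qN)\cong T_qN$, define
\[ G\colon\{(q,w)\in N\times TN:(q,\pi_{TN}(w))\in\Omega'\}\to TN,\quad (q,w)\mapsto T_{\pi_{TN}(w)}\Phi_q(w)\in T_qN, \]
which is smooth, satisfies $\pi_{TN}\circ G=\mathrm{pr}_1$, and is a linear isomorphism in the second variable. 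Then for $\gamma\in\F(M,N)$ I set
\[ \Xi_\gamma\colon\F(M,\pi_{TN})^{-1}(\mathcal{U}_\gamma)\to\mathcal{U}_\gamma\times\Gamma_\F(\gamma),\quad \tau\mapsto\big(\pi_{TN}\circ\tau,\ [\,p\mapsto G(\gamma(p),\tau(p))\,]\big), \]
whose second component lands in $\Gamma_\F(\gamma)$ and restricts on each fibre to a continuous-linear isomorphism $\Gamma_\F(\eta)\to\Gamma_\F(\gamma)$; the inverse is built analogously from $w\mapsto (T\Phi_q)^{-1}(w)$.

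Next I would verify that $\Xi_\gamma$ and $\Xi_\gamma^{-1}$ are smooth and that the transitions are linear. Reading $\F(M,TN)$ in a chart coming from a local addition on $TN$ (which exists by Remark \ref{locallemma}) and $\mathcal{U}_\gamma\times\Gamma_\F(\gamma)$ in the product chart, the second component of $\Xi_\gamma$ becomes a superposition operator $\F(U/W,-)$ applied to a smooth map between open subsets of Euclidean spaces built from $G$ and the transition data; smoothness then follows from Lemma \ref{F new}, exactly as in the proofs of Theorem \ref{main1} and Proposition \ref{Fpropfon}. Fibre-linearity of $G$ makes each transition $\Xi_\xi\circ\Xi_\gamma^{-1}$ fibrewise continuous-linear, so $\{(\mathcal{U}_\gamma,\Xi_\gamma)\}$ is a vector-bundle atlas and $\F(M,\pi_{TN})$ is a smooth vector bundle with fibre $\Gamma_\F(\gamma)$ over $\gamma$.

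Finally, for $\Theta_N$: it covers $\mathrm{id}_{\F(M,N)}$, since $\Theta_N(v)(p)=T\varepsilon_p(v)\in T_{\gamma(p)}N$ for $v\in T_\gamma\F(M,N)$, and on each fibre it equals the continuous-linear isomorphism $\Theta_\gamma$ of the preceding proposition. For smoothness I would compute $\Xi_\gamma\circ\Theta_N\circ T\Psi_\gamma$ in the tangent chart $T\Psi_\gamma\colon\mathcal{V}_\gamma\times\Gamma_\F(\gamma)\to T\mathcal{U}_\gamma$. As $\varepsilon_p\circ\Psi_\gamma(\sigma)=\Sigma(\sigma(p))$ and the point evaluation $\Gamma_\F(\gamma)\to T_{\gamma(p)}N$ is continuous-linear (Proposition \ref{Fgammaeval}), the chain rule yields $\Theta_N(T_\sigma\Psi_\gamma(w))(p)=T_{\sigma(p)}\Sigma(w(p))$, whence
\[ \big(\Xi_\gamma\circ\Theta_N\circ T\Psi_\gamma\big)(\sigma,w)=\Big(\Psi_\gamma(\sigma),\ [\,p\mapsto G(\gamma(p),T_{\sigma(p)}\Sigma(w(p)))\,]\Big), \]
which is smooth and linear in $w$ by the same superposition argument; the inverse, obtained using $(T_{\sigma(p)}\Sigma)^{-1}$, is smooth as well, so $\Theta_N$ is an isomorphism of vector bundles. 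The main obstacle I anticipate is the bookkeeping behind $G$ and the smoothness of $\Xi_\gamma,\Xi_\gamma^{-1}$: one must arrange all domains so that, in local charts, every map factors through a superposition operator of a genuinely smooth finite-dimensional map to which Lemma \ref{F new} applies, while keeping the identifications $T(TV)\cong V\times(\R^n)^3$ and the fibre-linearity explicit. By contrast, the identification of $\Theta_N$ once the bundle structure is available reduces to the normalization computation already carried out in the preceding proposition.
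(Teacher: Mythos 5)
Your proposal is correct and coincides with the paper's intended argument: the paper offers no written proof of this proposition, deferring entirely to the method of \cite[Theorem A.12]{AGS}, which is exactly the strategy you execute --- bundle trivializations over the chart domains $\mathcal{U}_\gamma$ built from the fibrewise-linear transport map extracted from $\theta_N^{-1}$, smoothness of the trivializations and of $\Theta_N$ in charts via the superposition Lemma \ref{F new} (as in Theorem \ref{main1} and Proposition \ref{Fpropfon}), and the fibrewise identification of $\Theta_N$ with the isomorphisms $\Theta_\gamma$ from the preceding proposition. Your write-up is in fact more detailed than what the paper provides.
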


\begin{prop}
Let $\mathcal{U}$ be a good collection of open subsets of $[0,\infty)^m$ and $(\F(U,\R))_{U\in \mathcal{U}}$ be a family of locally convex spaces suitable for global analysis. Let $M$ be a $m$-dimensional compact smooth manifold with corners, $N_1$ and $N_2$ be a $n$-dimensional smooth manifold which admits a local addition. If $f:N_1\to N_2$ is a smooth map, then the tangent map of
    \[ \F(M,f):\F(M,N_1)\to \F(M,N_2),\quad \gamma\mapsto f\circ \gamma \]
    is given by 
    \[ T\F(M,f) = \Theta_{N_2}^{-1}\circ \F(M,Tf) \circ \Theta_{N_1}.\]
\end{prop}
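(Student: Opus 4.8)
The plan is to reduce the claimed identity to a fibrewise naturality statement and then verify it by evaluating at points of $M$, exploiting the defining formula $\Theta_N(v)(p)=T\varepsilon_p(v)$ together with the naturality of point evaluation. First I would record the two preliminary facts that make the statement meaningful. Since $f$ is smooth, so is $Tf\colon TN_1\to TN_2$, and as $TN_1$, $TN_2$ admit local additions (Remark \ref{locallemma}), the superposition operator $\F(M,Tf)\colon \F(M,TN_1)\to \F(M,TN_2)$, $\tau\mapsto Tf\circ\tau$, is smooth by Proposition \ref{Fpropfon}. Moreover the relation $\pi_{TN_2}\circ Tf=f\circ\pi_{TN_1}$ forces $\F(M,Tf)$ to carry the fibre $\Gamma_\F(\gamma)=\F(M,\pi_{TN_1})^{-1}(\{\gamma\})$ into $\Gamma_\F(f\circ\gamma)$, so the composite $\Theta_{N_2}^{-1}\circ\F(M,Tf)\circ\Theta_{N_1}$ is a well-defined bundle morphism covering $\F(M,f)$. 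Because $\Theta_{N_2}$ is an isomorphism of vector bundles (the preceding proposition), the asserted formula is equivalent to
\[
\Theta_{N_2}\circ T\F(M,f)=\F(M,Tf)\circ\Theta_{N_1},
\]
which is the identity I would actually prove.

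Next I would verify this equality of maps $T\F(M,N_1)\to\F(M,TN_2)$ by evaluating both sides. Fix $v\in T_\gamma\F(M,N_1)$; each side is then a function $M\to TN_2$, so it suffices to compare their values at an arbitrary $p\in M$. Writing $\varepsilon_p^{N_i}$ for the point evaluation on $\F(M,N_i)$, the definition of $\Theta_{N_2}$ gives that the left-hand value equals $T\varepsilon_p^{N_2}\big(T\F(M,f)(v)\big)=T(\varepsilon_p^{N_2}\circ\F(M,f))(v)$. The crucial observation is the naturality of point evaluation: for every $\eta\in\F(M,N_1)$ one has $\varepsilon_p^{N_2}(\F(M,f)(\eta))=(f\circ\eta)(p)=f(\eta(p))=f(\varepsilon_p^{N_1}(\eta))$, that is $\varepsilon_p^{N_2}\circ\F(M,f)=f\circ\varepsilon_p^{N_1}$. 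Both $\varepsilon_p^{N_1}$ and $\F(M,f)$ being smooth (Proposition \ref{pointeval} and Proposition \ref{Fpropfon}), the chain rule yields $T(\varepsilon_p^{N_2}\circ\F(M,f))=Tf\circ T\varepsilon_p^{N_1}$, so the left-hand value is $Tf\big(T\varepsilon_p^{N_1}(v)\big)=Tf\big(\Theta_{N_1}(v)(p)\big)$. On the other hand the right-hand value is $(\F(M,Tf)(\Theta_{N_1}(v)))(p)=(Tf\circ\Theta_{N_1}(v))(p)=Tf\big(\Theta_{N_1}(v)(p)\big)$, which coincides with the left-hand value. Since $p$ and $v$ were arbitrary, the two bundle morphisms agree, establishing the formula.

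I expect the computational core to be short; the points requiring care are structural rather than calculational. The main thing to get right is the reduction step: one must invoke that $\Theta_{N_2}$ is a genuine isomorphism of vector bundles (not merely a fibrewise linear bijection), so that precomposition and postcomposition are legitimate and the two forms of the identity are truly equivalent. The second delicate point is ensuring that the chain rule applies, which rests on the smoothness of $\varepsilon_p^{N_1}$ and $\F(M,f)$ in the locally convex setting; both are supplied by the earlier propositions, so no new analytic input is needed. Thus the only genuine idea is the naturality relation $\varepsilon_p^{N_2}\circ\F(M,f)=f\circ\varepsilon_p^{N_1}$, from which everything follows by functoriality of the tangent functor.
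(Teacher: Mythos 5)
Your proof is correct, and its computational core takes a genuinely different route from the paper's. Both arguments reduce the claim to the commuting square $\Theta_{N_2}\circ T\F(M,f)=\F(M,Tf)\circ\Theta_{N_1}$ and both test it pointwise through $T\varepsilon_p$, but from there the mechanisms diverge. The paper works in the chart $\Psi_\gamma$ with $\Psi_\gamma(0)=\gamma$, represents a tangent vector $T\Psi_\gamma(0,\sigma)$ by the curve $[s\mapsto s\sigma]$, pushes this curve through $\Sigma_1$, $f$ and $\varepsilon_p$, and crucially invokes the \emph{normalization} of the local addition ($T_0\Sigma_1|_{T_{\gamma(p)}N_1}=\mathrm{id}$) to identify the result as $Tf(\sigma(p))$. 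You avoid charts, curves and the local addition entirely: your one idea is the naturality relation $\varepsilon_p^{N_2}\circ\F(M,f)=f\circ\varepsilon_p^{N_1}$, after which the chain rule (legitimate here since $\varepsilon_p^{N_1}$, $\varepsilon_p^{N_2}$ and $\F(M,f)$ are smooth by Propositions \ref{pointeval} and \ref{Fpropfon}) gives $T\varepsilon_p^{N_2}\circ T\F(M,f)=Tf\circ T\varepsilon_p^{N_1}$, which is exactly the pointwise statement needed. What your approach buys is brevity, coordinate-freeness, and independence from the normalization hypothesis on $\Sigma$; what the paper's approach buys is an explicit, self-contained description of how tangent vectors of $\F(M,N_1)$ look in the canonical charts, consistent with (and reusing) the curve technique from its preceding proposition on $\Theta_\gamma$. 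The only structural points you must (and do) get right are that $\Theta_{N_2}$ is a genuine bundle isomorphism, so the reduction to the commuting square is an equivalence, and that $\F(M,Tf)$ maps $\Gamma_\F(\gamma)$ into $\Gamma_\F(f\circ\gamma)$, which follows from $\pi_{TN_2}\circ Tf=f\circ\pi_{TN_1}$.
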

\begin{proof}
Let $\Sigma_1:\Omega_1\to N_1$ be a local addition on $N_1$ and $\gamma \in \F(M,N_1)$.\\ 
If $\Psi_\gamma:\mathcal{V}_\gamma\to \mathcal{U}_\gamma$ is a chart on $\gamma$ such that $\Psi_\gamma (0)= \gamma$, we consider the isomorphism of vector space
    \[ T\Psi_\gamma : \{0\}\times \Gamma_{\F}(\gamma) \to T_\gamma  \F(M,N_1). \]
For $p\in M$ we denote the point evaluation in $\varepsilon_p^i:\F(M,N_i)\to N_i$ for $i\in \{1,2\}$, then for each $\sigma \in \Gamma_{\F}(\gamma)$ we have
\begin{align*}
    T\varepsilon_p^2\circ T\F(M, f) \circ T\Psi_\gamma (0,\sigma) &= T\varepsilon_p^2\circ T\F(M,f) \circ T\Psi_\gamma([s\mapsto s\sigma]) \\
    &= T\varepsilon_p^2\circ T\F(M,f) ([s\mapsto \Sigma_1 \circ s\sigma])\\
    &= T\varepsilon_p^2 ([s\mapsto f \circ \Sigma_1 \circ s\sigma])\\
    &= [s\mapsto \varepsilon_p^2\left(f \circ \Sigma_1 \circ s\sigma\right)] \\
    &= [s\mapsto f \circ \Sigma_1 (s\sigma(p))]\\
    &= Tf \circ T_0\Sigma_1|_{T_{\gamma(p)}N_1} ([s\sigma(p)]) \\
    &= Tf([s\mapsto s\sigma(p)])\\
    &= Tf(\sigma(p)) \\
    &= \F(M,Tf)\circ T\varepsilon_p^1 \circ T\Psi_\gamma (0,\sigma).
\end{align*}
Hence
\[ \Theta_{N_2}\circ T\F(M,f)  =\F(M,Tf)\circ \Theta_{N_1}.\]
\end{proof}

\begin{prop}
    Let $\mathcal{U}$ be a good collection of open subsets of $[0,\infty)^m$ and $(\F(U,\R))_{U\in \mathcal{U}}$ be a family of locally convex spaces suitable for global analysis. Let $M$ be an $m$-dimensional compact smooth manifold with corners, $N$ be an $n$-dimensional smooth manifold which admits a local addition $\Sigma:\Omega\to N$. Then the map
    \[ \F(M,\Sigma):\F(M,\Omega)\to \F(M,N),\quad \sigma\mapsto \Sigma\circ \sigma \]
    Defines a local addition on $\F(M,N)$.
\end{prop}
\begin{proof}
    For the open sets $\Omega\subseteq TN$ and $\Omega':=(\pi_N,\Sigma)(\Omega)\subseteq N\times N$ we define the open sets 
    \[ \F(M,\Omega):=\{\sigma\in \F(M,TN) : \sigma(M)\subseteq \Omega\} \]
    and 
    \[ \F(M,\Omega') := \{ \alpha\in \F(M,N\times N) : \alpha(M)\subseteq \Omega' \}. \]
    Let $\gamma\in \F(M,N)$, we define $\sigma_\gamma:M\to TN, p\mapsto 0_{\gamma(p)}$. Then $\sigma_\gamma\in \Gamma_\F (\gamma)$ and the zero-section is given by
    \[ \F(M,N)\to \F(M,TN),\quad \sigma\mapsto \sigma_\gamma.\]
    Moreover, we see that
    \[ \F(M,\Sigma)(\sigma_\gamma)(p)=\left(\Sigma\circ \sigma_\gamma\right)(p)=\Sigma(0_{\gamma(p)})=\gamma(p)\]
    hence $\F(M,\Sigma)(\sigma_\gamma)=\gamma$ for each $\gamma\in \F(M,N)$.\\
    Since $(\pi_N,\Sigma):\Omega\to \Omega'$ is a $C^\infty$-diffeomorphism, by Proposition \ref{Fpropfon}, we can define the $C^\infty$-diffeomorphism
    \[ \Theta:=\F\left(M,(\pi_N,\Sigma)\right):  \F(M,\Omega)\to \F(M,\Omega'), \quad \sigma\mapsto (\pi_{N},\Sigma)\circ \sigma \]
    with inverse given by
     \[ \Theta^{-1}:=\F\left(M,(\pi_N,\Sigma)^{-1}\right): \F(M,\Omega')\to \F(M,\Omega), \quad \alpha\mapsto (\pi_{N},\Sigma)^{-1}\circ \alpha \]
    Hence $\F(M,\Sigma)$ is a local addition on $\F(M,N)$.
\end{proof}

\begin{re}\label{Fexlie}
Let $\mathcal{U}$ be a good collection of open subsets of $[0,\infty)^m$ and $(\F(U,\R))_{U\in \mathcal{U}}$ be a family of locally convex spaces suitable for global analysis. Let $M$ be an $m$-dimensional compact smooth manifold with corners and $G$ be an $n$-dimensional Lie group, then we already know that the space $\F(M,G)$ is a Lie group (see \cite{FGL1}). We will give an alternative proof of this.\\ 
Let $e\in G$ be the neutral element, let $L_g:G\to G$, $h\mapsto gh$ be the left translation by $g\in G$ and the action 
\[ G\times TG\to TG,\quad (g,v_h)\mapsto g.v_h := TL_{g} (v_h)\in T_{gh}G.\]
If $\varphi:U\subseteq G\to V\subseteq T_e G$ is a chart in $e$ such that $\varphi(e)=0$, then the set \[\Omega_\varphi := \bigcup\limits_{g\in G}g.V \subseteq TG\] 
is open and the map
\[ \Sigma_\varphi :\Omega_\varphi \to G, \quad v\mapsto \pi_{TG}(v)\left( \varphi^{-1}( \pi_{TG}(v)^{-1}.v)\right) \]
defines a local addition for $G$, hence $\F(M,G)$ is a smooth manifold with charts constructed with $(\Omega_\varphi,\Sigma_\varphi)$. Let $\mu_G:G\times G\to G$ and $\lambda_G:G\to G$ be the multiplication map and inversion maps on $G$ respectively, we define the multiplication map $ \mu_{AC}$ and the inversion map $\lambda_{AC}$ on $\F(M,G)$ as
\[ \mu_{\F}  := \F(M,\mu_G):\F(M,G)\times \F(M,G)\to \F(M,G)\]
and
\[ \lambda_{\F} := \F(M,\lambda_G):\F(M,G) \to \F(M,G) \]
that by Lemma \ref{pre FM Acts} and Proposition \ref{Fpropfon} are smooth. \\
We observe that for the neutral element $\zeta_e:M\to G$, $p\mapsto e$ of $\F(M,G)$ we have 
\[ \Gamma_{\F}(\zeta_e) = \F(M,T_e G). \]
If $\Psi_{\zeta_e}^{-1}:\mathcal{U}_{\zeta_e}\to \mathcal{V}_{\zeta_e}$ is a chart around ${\zeta_e}\in \F(M,G)$, then we have $\mathcal{U}_{\zeta_e} = \F(M,U)$ and ${\mathcal{V}_{\zeta_{e}} = \F(M,V)}$. Moreover, we see that
\begin{align*}
\Psi_{\zeta_e} \circ\F(M,\varphi) (\gamma) &= \Sigma_\varphi \circ (\varphi\circ\gamma) \\
&= \pi_{TG}(\varphi\circ\gamma)\left( \varphi^{-1}( \pi_{TG}(\varphi\circ\gamma)^{-1}.\varphi\circ\gamma)	 \right) \\
&= e  \varphi^{-1} (e.\varphi\circ \gamma) \\
&= \gamma.
\end{align*}
This enables us to say that for the neutral element ${\zeta_e}\in \F(M,G)$ the chart is given by
\[ \F(M,\varphi):\F(M,U)\to \F(M,V),\quad \gamma \mapsto \varphi \circ \gamma. \]
\end{re}

\begin{re}
Instead of using the set $[0,\infty)^m$, it is possible to generalize all results to a good collection of open subsets $\mathcal{U}$ of a locally convex, closed subset of $\Rm$, such as half-spaces, all of $\R^m$, or a disjoint union of countably many $m$-dimensional polytopes.  
\end{re}

\section{Example of Manifolds of mappings}
Let $m\in \N$ and $\mathcal{U}$ be a good collection of open subsets of $[0,\infty)^m$. If $\left( \F(U,\R)\right)_{U\in \mathcal{U}}$ is a family of Fr\'echet spaces, then by Lemma \ref{AX4}, this family verify the Globalization Axiom. Moreover, if $\left( \F(U,\R)\right)_{U\in \mathcal{U}}$ verifies the following axioms: 
\begin{itemize}
    \item[\textbf{(PF')}] For all $U, V\in \mathcal{U}$ such that $V$ is relatively compact in $U$ and each smooth map\\ 
    $f:U\times \R\to \R$, we have $f_*(\gamma):=f\circ (\text{id}_V,\gamma|_V)\in \F(V,\R)$ for all $\gamma\in \F(U,\R)$ and the map
\[ f_*:\F(U,\R)\to \F(V,\R),\quad \gamma\mapsto f\circ (\text{id}_V,\gamma|_V)\]
is continuous.
    \item[\textbf{(PB')}] Let $U$ be an open subset of $[0,\infty)^m$ and $V,W\in \mathcal{U}$ such that $W$ has compact closure contained in $U$. Let $\Theta:U\to V$ be a smooth diffeomorphism. Then $\gamma\circ \Theta|_W\in \F(W,E)$ for all $\gamma\in \F(V,E)$.
    \item[\textbf{(MU')}] If $U\in\mathcal{U}$ and $h\in C_c^{\infty}(U,\R)$, then $h\gamma \in \F(U,E)$ for all $\gamma\in\F(U,E)$.
\end{itemize}
\noindent
Then it is a family of locally convex space suitable for global analysis.
\begin{re}
In \cite{FGL1}, Gl\"ockner and T\'arrega show that $H^{>\frac{m}{2}}(M,G)$ can be made a Lie group, where $M$ is a compact manifold of dimension $m$ (without boundary) and $G$ a finite-dimensional Lie group. This construction coincide with the construction using families of locally convex space suitable for global analysis (see Remark \ref{Fexlie}).\\
By Krikorian's work (see  \cite{Kri}) we know that the set of H\"older-continuous functions has a smooth manifold structure. In this section, we intent to construct a manifold structure for this set of mappings using the spaces of sections as modeling space.
\end{re}

\begin{de}
Let $m,n \in \N$, $0<\lambda\leq 1$ and $U \subset \Rm$ be an open and bounded subset. We say that a function $\eta:U\to \Rn$ is $\lambda$-H\"older continuous if there exists a positive constant $C$ such that
\[ \lVert \eta(x)-\eta(y) \lVert \leq C \lVert x-y\lVert^\lambda,\quad \forall x,y\in U.\]
And for each $\lambda$-H\"older continuous function we define
\[ \lVert \eta \lVert_\lambda := \sup_{ \substack{x,y\in U \\  x\neq y }} \left\{ \frac{\lVert \eta(x)-\eta(y)\lVert}{\lVert x-y\lVert^\lambda} \right\}.\]
Let $\F_\lambda(U,\Rn)$ be the space of $\lambda$-H\"older continuous functions $\eta:U\to \Rn$. By boundedness of the subset $U$, each function $\eta \in \F_\lambda(U,\Rn)$ is bounded. This allows us to consider the norm on $\F_\lambda(U,\Rn)$ 
\[ \lVert \eta \lVert_{\F_\lambda} := \lVert \eta \lVert_{\infty}+\lVert \eta \lVert_{\lambda}. \]
Then $(\F_\lambda(U,\Rn),\lVert \cdot\lVert_{\F_\lambda})$ is a Banach space (see e.g. \cite{FHOL}). In particular, if $\lambda=1$ then $\F_1(U,\R)$ denotes the space of Lipschitz continuous functions. \\
We will denote the inclusion map by $J:\F_\lambda(U,\Rn)\to BC(U,\Rn)$, which is continuous.\\
Let $\mathcal{U}$ be the family of open and bounded subsets of $\Rm$. For $0<\lambda\leq 1$ fixed, we consider the family of function spaces $\left\{ \F_\lambda(U,\R)\right\}_{U\in \mathcal{U}}$. We will show that they define a family of locally convex spaces suitable for global analysis.
\end{de}

\begin{lemma}\label{5 restric}
Let $U, V\in \mathcal{U}$ such that $V\subseteq U$. Then $\eta|_V\in \F_\lambda(V,\R)$ for each $\eta\in \F_\lambda(U,\R)$ and the map
\[ \F_\lambda(U,\R)\to \F_\lambda(V,\R),\quad \eta\mapsto \eta|_V\]
is continuous linear.
\end{lemma}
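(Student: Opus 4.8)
The plan is to verify both claims directly from the definitions of the $\lambda$-Hölder seminorm and the norm $\lVert\cdot\lVert_{\F_\lambda}$, exploiting that the suprema defining the seminorms over $V$ range over subsets of those over $U$. In contrast with Axioms (PB), (GL) and (MU), where continuity was obtained indirectly through the Closed Graph Theorem (Lemmas \ref{AX2}, \ref{AX3}, \ref{AX4}), here a direct contraction estimate is available, so no completeness or covering argument is needed.

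First I would check well-definedness. Fix $\eta\in \F_\lambda(U,\R)$. For any $x,y\in V$ we have $x,y\in U$ since $V\subseteq U$, hence
\[ \lVert \eta(x)-\eta(y)\lVert \leq \lVert \eta\lVert_\lambda\, \lVert x-y\lVert^\lambda. \]
This shows that $\eta|_V$ is $\lambda$-Hölder continuous, so $\eta|_V\in \F_\lambda(V,\R)$. Linearity of the map $\eta\mapsto \eta|_V$ is immediate from the pointwise definition of the vector space operations.

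For continuity I would compare the two ingredients of the norm. Because $V\subseteq U$, the supremum defining $\lVert \eta|_V\lVert_\infty$ is taken over a subset of that defining $\lVert \eta\lVert_\infty$, so $\lVert \eta|_V\lVert_\infty \leq \lVert \eta\lVert_\infty$; likewise the pairs $(x,y)$ with $x\neq y$ in $V$ form a subset of those in $U$, whence $\lVert \eta|_V\lVert_\lambda\leq \lVert \eta\lVert_\lambda$. Adding these estimates gives
\[ \lVert \eta|_V\lVert_{\F_\lambda} = \lVert \eta|_V\lVert_\infty + \lVert \eta|_V\lVert_\lambda \leq \lVert \eta\lVert_\infty + \lVert \eta\lVert_\lambda = \lVert \eta\lVert_{\F_\lambda}, \]
so the restriction map is a linear operator of operator norm at most $1$, hence bounded and therefore continuous. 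There is no genuine obstacle in this argument; the only point worth recording is that the estimate relies solely on the set inclusion $V\subseteq U$ together with the monotonicity of the supremum, which is exactly what makes the H\"older example fit the axiomatic framework so cleanly.
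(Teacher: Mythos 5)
Your proof is correct and follows exactly the route the paper intends: the paper's own proof is the one-line remark that the lemma ``is a direct consequence of the properties of the supremum,'' which is precisely the monotonicity-of-supremum argument you spell out. Your version simply makes explicit the contraction estimate $\lVert \eta|_V\lVert_{\F_\lambda}\leq \lVert \eta\lVert_{\F_\lambda}$ that the paper leaves implicit.
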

\begin{proof}
 This is direct consequence of the properties of the supremum.
\end{proof}

\begin{lemma}
    Let $U$ be an open subset of $\Rm$ and $V, W\in \mathcal{U}$ such that $W$ has compact closure contained in $U$ and $\Theta:U\to V$ be a $C^\infty$-diffeomorphism. Then $\gamma\circ \Theta|_W\in \F(W,E)$ for all $\gamma\in \F(V,E)$.
\end{lemma}
\begin{proof}
By relative compactness of $W$, we can consider a finite open cover of convex subsets $(W_i)_{i=1}^k$ for $\overline{W}$ such that $\Theta|_{W_i}$ is H\"older continuous and  $\eta\circ \Theta|_{W_i} \in  \F_\lambda(W_i,\R)$ for each $i\in\{1,...,k\}$ and $\eta\in \F_\lambda (V,\R)$. Therefore $\gamma\circ \Theta|_W\in \F(W,E)$.
\end{proof}


\begin{lemma}\label{5 h eta}
If $h\in C_c^\infty(U,\R)$, then $h\eta\in \F_\lambda(U,\R)$ for each $\eta\in \F_\lambda(U,\R)$.    
\end{lemma}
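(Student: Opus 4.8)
The plan is to verify the two defining properties of membership in $\F_\lambda(U,\R)$ for the product $h\eta$, namely boundedness and $\lambda$-H\"older continuity. Boundedness is immediate: since $h$ is continuous with compact support it is bounded by $\|h\|_\infty$, and $\eta$ is bounded by hypothesis, so $|h(x)\eta(x)|\leq \|h\|_\infty\,\|\eta\|_\infty$ for all $x\in U$. The real content is the H\"older estimate, which I would obtain through the standard product decomposition.

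For $x,y\in U$ I would write
\[
h(x)\eta(x)-h(y)\eta(y)=h(x)\big(\eta(x)-\eta(y)\big)+\big(h(x)-h(y)\big)\eta(y)
\]
and estimate the two summands separately. The first is controlled directly by the H\"older seminorm of $\eta$, giving $|h(x)|\,\|\eta(x)-\eta(y)\|\leq \|h\|_\infty\,\|\eta\|_\lambda\,\|x-y\|^\lambda$. For the second I would bound $|h(x)-h(y)|$: extending $h$ by zero produces a function in $C_c^\infty(\Rm,\R)$ with bounded differential, so $h$ is Lipschitz on $\Rm$ (and a fortiori on $U$) with constant $L:=\sup_x\|dh(x)\|<\infty$, whence $|h(x)-h(y)|\leq L\,\|x-y\|$ for all $x,y\in U$.

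The one genuinely non-routine point---and the reason the boundedness of $U$ is essential---is that this Lipschitz estimate carries the wrong power of $\|x-y\|$: I need a bound by $\|x-y\|^\lambda$, not $\|x-y\|$. Here I would exploit that $U$ is bounded, so $D:=\operatorname{diam}(U)<\infty$ and $\|x-y\|\leq D$ for all $x,y\in U$. Writing $\|x-y\|=\|x-y\|^{1-\lambda}\|x-y\|^\lambda\leq D^{1-\lambda}\|x-y\|^\lambda$, which is valid precisely because $0<\lambda\leq 1$, converts the Lipschitz bound into $|h(x)-h(y)|\leq L\,D^{1-\lambda}\|x-y\|^\lambda$, so the second summand is at most $L\,D^{1-\lambda}\|\eta\|_\infty\,\|x-y\|^\lambda$.

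Combining the two estimates yields
\[
\|h(x)\eta(x)-h(y)\eta(y)\|\leq\big(\|h\|_\infty\,\|\eta\|_\lambda+L\,D^{1-\lambda}\|\eta\|_\infty\big)\|x-y\|^\lambda,
\]
which shows that $h\eta$ is $\lambda$-H\"older with finite seminorm, and hence $h\eta\in \F_\lambda(U,\R)$. I note that the computation in fact delivers the explicit bound $\|h\eta\|_\lambda\leq \|h\|_\infty\,\|\eta\|_\lambda+L\,D^{1-\lambda}\|\eta\|_\infty$, which together with the boundedness estimate is exactly what is needed to apply Lemma \ref{AX3} and thereby conclude the continuity of $m_h$, completing the verification of axiom (MU) for the spaces $\F_\lambda(U,\R)$.
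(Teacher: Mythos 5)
Your proof is correct and takes essentially the same approach as the paper's: the paper's one-line argument simply asserts that $h$, being smooth with compact support, is $\lambda$-H\"older continuous and that the product of bounded $\lambda$-H\"older functions is again in $\F_\lambda(U,\R)$, which are precisely the facts you verify in detail (the Lipschitz bound via $L=\sup_x\lVert dh(x)\rVert$ after extending $h$ by zero to $\Rm$, the conversion to a H\"older bound using $\operatorname{diam}(U)<\infty$, and the standard product decomposition). Your explicit estimate $\lVert h\eta\rVert_\lambda\leq \lVert h\rVert_\infty\lVert\eta\rVert_\lambda+L\,D^{1-\lambda}\lVert\eta\rVert_\infty$ merely makes quantitative what the paper leaves implicit.
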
 
\begin{proof}
Let $\eta \in \F_\lambda(U,\R)$. Since the function $h$ is smooth with compact support, is $\lambda$-H\"older continuous and the product $h\eta$ is in $\F_\lambda(U,\R)$.
\end{proof}

\begin{lemma}\label{5 foeta}
Let $\ell\in \N$ and $V\in \mathcal{U}$ be relatively compact. If $f:\R^\ell\to\R$ is a smooth map, then $f\circ \eta \in \F_\lambda(V,\R)$ for each $\eta \in \F_\lambda(V,\R^\ell)$ and the map
\[ \tilde{f} : \F_\lambda(V,\R^\ell)\to \F_\lambda(V,\R),\quad \eta \mapsto f\circ \eta \]
is continuous.
\end{lemma}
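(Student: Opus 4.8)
The plan is to prove the two assertions in turn: that $f\circ\eta$ lies in $\F_\lambda(V,\R)$, and that $\tilde f$ is continuous, the latter carrying the real difficulty. The standing facts I use are that $V$ is bounded, so that every $\eta\in\F_\lambda(V,\R^\ell)$ has bounded range, and that $f$ is smooth, hence $C^1$ with locally Lipschitz differential $Df$.

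For well-definedness, fix $\eta$ and let $B\subseteq\R^\ell$ be a closed ball containing its range. On the convex compact set $B$ the map $f$ is Lipschitz with constant $L:=\sup_B\|Df\|$, so for $x,y\in V$
\[ |f(\eta(x))-f(\eta(y))|\le L\|\eta(x)-\eta(y)\|\le L\|\eta\|_\lambda\|x-y\|^\lambda, \]
whence $f\circ\eta$ is $\lambda$-H\"older with $\|f\circ\eta\|_\lambda\le L\|\eta\|_\lambda$; it is bounded because $f$ is bounded on $B$. Thus $f\circ\eta\in\F_\lambda(V,\R)$.

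For continuity, since $\F_\lambda(V,\R^\ell)$ is a Banach space it suffices to check sequential continuity, so let $\eta_k\to\eta$. For all large $k$ the ranges of $\eta$ and $\eta_k$, and all segments joining $\eta(x)$ to $\eta_k(x)$, lie in a common convex compact ball $B$; set $L:=\sup_B\|Df\|$ and let $L'$ be a Lipschitz constant for $Df$ on $B$. The supremum estimate $\|f\circ\eta_k-f\circ\eta\|_\infty\le L\|\eta_k-\eta\|_\infty$ is immediate and tends to $0$. The substance is the H\"older seminorm of the difference. Writing $u:=(\eta_k-\eta)(x)$ and $v:=(\eta_k-\eta)(y)$ and applying the fundamental theorem of calculus to $f(\eta_k(x))-f(\eta(x))=\int_0^1 Df(\eta(x)+tu)\cdot u\,dt$ and its analogue at $y$, the second difference $f(\eta_k(x))-f(\eta(x))-f(\eta_k(y))+f(\eta(y))$ equals $\int_0^1\big(Df(\eta(x)+tu)\cdot u-Df(\eta(y)+tv)\cdot v\big)\,dt$, which I split as $Df(\eta(x)+tu)\cdot(u-v)+\big(Df(\eta(x)+tu)-Df(\eta(y)+tv)\big)\cdot v$.

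The main obstacle, and the heart of the proof, is to estimate these two integrands so that after division by $\|x-y\|^\lambda$ each becomes small uniformly in $x\ne y$. For the first I use $\|u-v\|=\|(\eta_k-\eta)(x)-(\eta_k-\eta)(y)\|\le\|\eta_k-\eta\|_\lambda\|x-y\|^\lambda$, contributing at most $L\|\eta_k-\eta\|_\lambda$. For the second the Lipschitz bound on $Df$ gives $\|Df(\eta(x)+tu)-Df(\eta(y)+tv)\|\le L'\big(\|\eta\|_\lambda+\|\eta_k-\eta\|_\lambda\big)\|x-y\|^\lambda$, while $\|v\|\le\|\eta_k-\eta\|_\infty$, contributing at most $L'\big(\|\eta\|_\lambda+\|\eta_k-\eta\|_\lambda\big)\|\eta_k-\eta\|_\infty$. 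Since $\|\eta_k-\eta\|_\lambda\to0$ and $\|\eta_k-\eta\|_\infty\to0$, both bounds vanish, so $\|f\circ\eta_k-f\circ\eta\|_\lambda\to0$; combined with the supremum estimate this yields $\tilde f(\eta_k)\to\tilde f(\eta)$ in $\F_\lambda(V,\R)$. I expect the bookkeeping of this second-difference estimate to be the only delicate point, and it is precisely here that one needs Lipschitz control on $Df$ rather than merely $C^1$-regularity.
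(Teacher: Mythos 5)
Your proof is correct, and at bottom it runs on the same fuel as the paper's: the fundamental theorem of calculus for $f$ together with a local Lipschitz bound for its derivative on a compact convex ball containing all relevant ranges. The packaging, however, is genuinely different. The paper first turns the H\"older seminorm into a sup-norm by embedding $\F_\lambda(V,\R)$ into $BC(V,\R)\times BC\big((V\times V)\setminus\Delta_V,\R\big)$ via $\tau\mapsto\big(\tau,\,(x,y)\mapsto(\tau(x)-\tau(y))/\|x-y\|^\lambda\big)$, and then proves an $\varepsilon$--$\delta$ estimate: each difference quotient of $f\circ\eta$ is written as $\int_0^1 df\big(\eta(y)+t(\eta(x)-\eta(y)),\,(\eta(x)-\eta(y))/\|x-y\|^\lambda\big)\,dt$, i.e.\ the FTC is applied along the \emph{spatial} segment from $\eta(y)$ to $\eta(x)$ with the divided difference as second argument, and the difference of the integrands for $\eta$ and $\gamma$ is bounded by a \emph{joint} Lipschitz estimate for $(u,v)\mapsto df(u,v)$, yielding the linear modulus $G_{f,\eta}\big(\|\eta-\gamma\|_\infty+\|\eta-\gamma\|_\lambda\big)$. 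You instead apply the FTC along the \emph{perturbation} segments from $\eta(x)$ to $\eta_k(x)$ and split the integrand by bilinearity into $Df(a)\cdot(u-v)$, controlled by $L\,\|\eta_k-\eta\|_\lambda$, and $\big(Df(a)-Df(b)\big)\cdot v$, controlled by $L'\big(\|\eta\|_\lambda+\|\eta_k-\eta\|_\lambda\big)\|\eta_k-\eta\|_\infty$. Your grouping buys a small technical cleanliness: you only ever need an operator-norm bound and a Lipschitz constant for $Df$ on the ball $B$, whereas the paper applies its joint Lipschitz constant on $\overline{B_{R_\eta+\delta}(0)}\times\overline{B_{R_\eta+\delta}(0)}$ even though the second arguments appearing there are divided differences bounded only by the H\"older seminorms, not by $R_\eta+\delta$ (an imprecision that is easily repaired by enlarging $R_\eta$, but which your bilinear splitting never encounters). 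What the paper's route buys in exchange is an explicit linear modulus of continuity near $\eta$ and a reusable embedding into $BC$-spaces; your sequential argument is shorter and suffices, since all spaces involved are Banach, hence metrizable.
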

\begin{proof}
Let $\Delta_V$ denote the diagonal set of $V\times V$. For each $\tau\in \F_\lambda(V,\R)$, we define the function
\[ h_\tau:(V\times V)\setminus \Delta_V \to \R ,\quad (x,y)\mapsto h_\tau(x,y):=\frac{\tau(x)-\tau(y)}{\lVert x-y\lVert^{\lambda}}. \]   
Then $h_\tau \in BC((V\times V)\setminus \Delta_V,\R)$ with $\lVert h_\tau\lVert_\infty = \lVert \tau \lVert_\lambda$, hence the linear map
\[ \F_\lambda(V,\R)\to BC((V\times V)\setminus \Delta_V,\R), \quad \tau\mapsto h_\tau\]
is continuous linear. Let us consider the map
\[ H:\F_\lambda(V,\R)\to BC((V\times V)\setminus \Delta_V,\R), \quad \tau\mapsto h_\tau\]
then $H$ is continuous. This enable us to define the linear map
\[ \Phi:\F_\lambda(V,\R)\to BC(V,\R)\times BC((V\times V)\setminus \Delta_V,\R),\quad \tau\mapsto (\tau,H(\tau))\]
which is a topological embedding with closed image.
Therefore, if the map $\tilde{f}$ makes sense, its continuity is equivalent to the continuity of
\[ F: \F_\lambda(V,\R^\ell) \to BC(V,\R)\times BC((V\times V)\setminus \Delta_V,\R),\quad \eta \mapsto \left(f\circ \eta, H(f\circ\eta)\right).\]
First we will show that makes sense, i.e., $F(\eta)\in BC(V,\R)\times BC((V\times V)\setminus \Delta_V,\R)$ for each ${\eta \in \F_\lambda(V,\R^\ell)}$. Since the inclusion map $J:\F_\lambda(V,\R^\ell)\to BC(V,\R^\ell)$ and the map
\[ BC(V,\R^\ell)\to BC(V,\R),\quad \eta\mapsto f\circ \eta\]
are continuous, the first component of $F$
\[ F_1:\F_\lambda(V,\R^\ell)\to BC(V,\R),\quad \eta \mapsto f\circ \eta\]
is continuous. Let us consider the second component of $F$
\[ F_2: \F_\lambda(V,\R^\ell) \to BC((V\times V)\setminus \Delta_V,\R),\quad \eta  \mapsto H(f\circ\eta).\]
Let $\eta \in \F_\lambda(V,\R^\ell)$, then $F_2(\eta)$ is clearly continuous. We will show that $F_2(\eta)$ is bounded. For $(x,y)\in V\times V \setminus \Delta_V$ we have
\[ F_2(\eta)(x,y) = H(f\circ \eta)(x,y) = \frac{f(\eta(x))-f(\eta(y))}{\lVert x-y\lVert^\lambda}. \]
Since $V$ is relatively compact, the set $\eta(V)$ can be contained on an open ball $B_{R_\eta}(0)$ for a constant $R_\eta>0$ large enough. By smoothness, the map $f$ verifies
\[ \lvert f(u)-f(v)\lvert\leq L_{f,\eta} \lVert u-v \lVert,\quad u,v\in \overline{B_{R_\eta}(0)}, \]
for some constant $L_{f,\eta}>0$. Therefore
\[ \lVert F_2(\eta)\lVert_\infty \leq L_{f,\eta} \lVert \eta \lVert_\lambda. \]
Then $F_2(\eta)\in BC((V\times V)\setminus \Delta_V,\R)$. Now we will show that $F_2$ is continuous in $\eta\in F_\lambda(V,\R^\ell)$. Let $\delta>0$ and $\gamma\in F_\lambda(V,\R^\ell)$ such that 
\[\lVert \eta - \gamma\lVert_{\F_\lambda} := \lVert \eta-\gamma\lVert_\infty + \lVert \eta-\gamma\lVert_\lambda\leq \delta.\] 
Then for each $z\in V$ we have 
\[ \lVert \eta(z)-\gamma(z) \lVert \leq \delta, \]
which mean that $\gamma(z)\in B_\delta(\eta(z))$. Therefore
\[ \gamma(V)\subseteq \bigcup_{z\in V} B_\delta (\eta(z)).\]
Let $R_\eta>0$ the constant which verifies $\eta(V)\subseteq B_{R_\eta}(0)$, then $B_\delta(\eta(z))\subseteq B_{R_\eta+\delta}(0)$ for each $z\in V$. In consequence, $\gamma(V)$ and $\eta(V)$ are contained in $B_{R_\eta+\delta}(0)$ and by smoothness of $f$, there exists a constant $G_{f,\eta}>0$ such that
\[ \lvert df(u_1,v_1)-df(u_2,v_2)\lvert\leq G_{f,\eta} \lVert (u_1,v_1)- (u_2,v_2) \lVert=G_{f,\eta}(\lVert u_1-u_2\lVert+\lVert v_1-v_2\lVert),\]
for each $(u_1,v_1), (u_2,v_2)\in \overline{B_{R_\eta+\delta}(0)}\times \overline{B_{R_\eta+\delta}(0)}$. By the mean value theorem, we have
\[ f(u_1)-f(u_2)=\int_0^1 df(u_2+t(u_1-u_2),u_1-u_2)dt,\quad u_1, u_2\in \overline{B_{R_\eta+\delta}}.\]
Hence, if $\omega:= \lvert F_2(\eta)(x,y)-F_2(\gamma)(x,y) \lvert$ then
\begin{align*}
\omega&= \left\lvert \frac{f(\eta(x))-f(\eta(y))}{\lVert x-y\lVert^\lambda}-\frac{f(\gamma(x))-f(\gamma(y))}{\lVert x-y\lVert^\lambda}\right\lvert \\
&= \left\lvert\int_0^1\left( df\left(\eta(y)+t(\eta(x)-\eta(y)),\frac{\eta(x)-\eta(y)}{\lVert x -y\lVert^\lambda}\right)- df\left(\gamma(y)+t(\gamma(x)-\gamma(y)),\frac{\gamma(x)-\gamma(y)}{\lVert x -y\lVert^\lambda}\right)\right)dt\right\lvert\\
&\leq  G_{f,\eta} \int_0^1  \left\lVert \left(\eta(y)+t(\eta(x)-\eta(y)),\frac{\eta(x)-\eta(y)}{\lVert x -y\lVert^\lambda}\right)-\left(\gamma(y)+t(\gamma(x)-\gamma(y)),\frac{\gamma(x)-\gamma(y)}{\lVert x -y\lVert^\lambda}\right)\right\lVert dt\\
&\leq G_{f,\eta}\left( \int_0^1 \lVert t(\eta(x)-\gamma(x))+(1-t)(\eta(y)-\gamma(y))\lVert dt + \frac{\lVert\big(\eta(x)-\gamma(x)\big)-\big(\eta(y)-\gamma(y)\big)\lVert}{\lVert x-y\lVert^\lambda} \right) \\
&\leq G_{f,\eta}( \lVert \eta-\gamma \lVert_\infty + \lVert \eta-\gamma \lVert_\lambda ) \\
& \leq G_{f,\eta}\delta.
\end{align*} 
If $\varepsilon=G_{f,\eta}\delta$, we have 
\[ \lVert F_2(\eta)-F_2(\gamma)\lVert_\infty \leq \varepsilon.\] Therefore, the map $F_2$ is continuous and in consequence, the map $\tilde{f}$ is continuous.
\end{proof}

\begin{lemma}\label{5 pf}
Let $U, V\in \mathcal{U}$ such that $V$ is relatively compact in $U$. If $f:U\times \R^n\to \R$ is a smooth function, then $f_*(\eta):=f\circ (\text{id},\eta|_V)\in \F_\lambda (V,\R)$ for all $\eta \in \F_\lambda(U,\R^n)$ and the map
\[ f_*:\F_\lambda (U,\R^n)\to \F_\lambda (V,\R),\quad \eta\mapsto f_*(\eta)=f\circ (\text{id},\eta|_V)\]
is continuous.
\end{lemma}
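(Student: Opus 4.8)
The plan is to reduce the statement to the composition case already settled in Lemma \ref{5 foeta}, using a cut-off to replace the partially defined map $f$ by a globally defined one, exactly in the spirit of the proof of Lemma \ref{F new}.

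First I would record the two auxiliary facts that turn the pushforward into a composition. Since $V$ is bounded, the identity $\mathrm{id}_V\colon V\to\R^m$ is Lipschitz and hence lies in $\F_\lambda(V,\R^m)$ (on a set of diameter $D$ one has $\lVert x-y\rVert\le D^{1-\lambda}\lVert x-y\rVert^\lambda$ for $\lambda\le 1$). Combining this with the continuity of the restriction map from Lemma \ref{5 restric} and the canonical isomorphism $\F_\lambda(V,\R^m)\times\F_\lambda(V,\R^n)\cong\F_\lambda(V,\R^{m+n})$ coming from Definition \ref{pre familysuitable}, the affine map
\[
\iota\colon \F_\lambda(U,\R^n)\to\F_\lambda(V,\R^{m+n}),\qquad \eta\mapsto(\mathrm{id}_V,\eta|_V),
\]
is continuous, and $f_*(\eta)=f\circ\iota(\eta)$ as maps on $V$ (note $V\subseteq U$ since $V$ is relatively compact in $U$).

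Next, because continuity is a local property, I would fix $\eta_0\in\F_\lambda(U,\R^n)$ and work on the neighbourhood $\{\eta:\lVert\eta-\eta_0\rVert_{\F_\lambda}<1\}$. Setting $R:=\lVert\eta_0\rVert_\infty+1$, the set $\overline{V}\times\overline{B_R(0)}$ is a compact subset of the open set $U\times\R^n$ (here $\overline V\subseteq U$ by relative compactness). A smooth cut-off $\chi\colon\R^{m+n}\to[0,1]$ equal to $1$ on a neighbourhood $Y$ of this compact set and with support inside $U\times\R^n$ then yields a globally defined smooth function
\[
g\colon \R^{m+n}\to\R,\qquad g:=\begin{cases}\chi f & \text{on } U\times\R^n,\\ 0 & \text{elsewhere,}\end{cases}
\]
with $g=f$ on $Y$. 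For $\eta$ in the chosen neighbourhood one has $\lVert\eta\rVert_\infty< R$, so $(x,\eta(x))\in V\times B_R(0)\subseteq Y$ for all $x\in V$; hence $f_*(\eta)=g\circ(\mathrm{id}_V,\eta|_V)=\widetilde g(\iota(\eta))$, where $\widetilde g$ is the composition operator of Lemma \ref{5 foeta} applied with $\ell=m+n$ to the smooth map $g\colon\R^{m+n}\to\R$. That lemma makes $\widetilde g$ continuous with values in $\F_\lambda(V,\R)$, so on this neighbourhood $f_*=\widetilde g\circ\iota$ is continuous and $\F_\lambda(V,\R)$-valued. Taking $\eta_0=\eta$ gives $f_*(\eta)\in\F_\lambda(V,\R)$ for every $\eta$, and letting $\eta_0$ range over the whole space establishes continuity everywhere.

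The genuine analytic work is already done in Lemma \ref{5 foeta}; the only obstacle peculiar to the pushforward is that $f$ is defined merely on $U\times\R^n$ rather than on all of $\R^{m+n}$. The cut-off removes exactly this difficulty, and it is essential that the radius $R$ (hence $g$) is allowed to depend on $\eta_0$: a single global extension of $f$ need not exist, but a local one suffices. A direct proof avoiding Lemma \ref{5 foeta} is possible but more tedious: one embeds $\F_\lambda(V,\R)$ into $BC(V,\R)\times BC((V\times V)\setminus\Delta_V,\R)$ as before and estimates the difference quotient by splitting the pairs $(x,y)$ into those with $\lVert x-y\rVert$ below the distance from $\overline V$ to $\partial U$ (where the connecting segment stays in the domain and the mean value theorem applies, the $\R^m$-component cancelling in differences) and those with $\lVert x-y\rVert$ bounded below (where the quotient is dominated by $2\lVert f_*(\eta)\rVert_\infty$ divided by a fixed positive power). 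Handling this split cleanly is precisely what the reduction lets us sidestep.
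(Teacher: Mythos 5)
Your proposal is correct, and its core reduction is the same as the paper's: express $f_*$ as the composition of the continuous affine map $\iota\colon\eta\mapsto(\mathrm{id}_V,\eta|_V)$ with the superposition operator of Lemma \ref{5 foeta}, after replacing $f$ by a globally defined smooth function $g$ via a cut-off. The difference lies in how the cut-off is chosen. You cut off in both variables, taking $\chi\equiv 1$ near the compact set $\overline{V}\times\overline{B_R(0)}$ with $R$ depending on $\eta_0$; this forces you to localize in $\eta_0$ and patch continuity together at the end. The paper instead takes a cut-off $\chi\colon\R^m\to\R$ depending \emph{only} on the base variable $x$, with $\chi\equiv 1$ on $\overline V$ and $\mathrm{supp}(\chi)\subseteq U$, and sets $g(x,y):=\chi(x)f(x,y)$ on $U\times\R^n$, extended by $0$. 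Since the pushforward only ever evaluates $f$ at points $(x,\eta(x))$ with $x\in V$, and $g=f$ on all of $\overline V\times\R^n$, this single $g$ satisfies $g_*=f_*$ on the whole of $\F_\lambda(U,\R^n)$, so no localization is needed. In particular your closing remark that ``a single global extension of $f$ need not exist'' is too pessimistic: a global extension of $f$ itself indeed need not exist, but a globally smooth function agreeing with $f$ on $\overline V\times\R^n$ --- which is all the argument requires --- always does, by exactly this base-variable cut-off. Your version remains a valid (and only marginally longer) alternative; the paper's trick simply buys a cleaner, neighbourhood-free proof.
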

\begin{proof}
First let assume that $U=\R^m$. Let $\text{id}:V\to \R^m$ be the identity map, then $\text{id}\in \F_\lambda(V,\R^m)$ and by Lemma \ref{5 restric}, the map
\[ F_\lambda (\R^m,\R^n)\to F_\lambda (V,\Rm\times \R^n),\quad \eta\mapsto (\text{id},\eta|_V) \]
is continuous. If $\ell=m+n$, by Lemma \ref{5 foeta}, the map
\[ F_\lambda (V, \Rm\times \R^n)\to F_\lambda (V,\R),\quad \beta \mapsto f\circ  \beta\]
is continuous. Therefore $f_*$ is just the composition of continuous mappings. \\
Let assume that $U\neq \Rm$. Let $\chi:\Rm\to\R$ be a cut-off function for $\overline{V}$ supported in $U$ (see e.g. \cite[Proposition 2.25]{JLee}); we define
\[ g:\R^m\times \R^n\to \R,\quad (x,y) \mapsto 
\left\{\begin{array}{ll}
 \chi(x)f(x,y),& \text{if } x\in U \\
 0, & \text{if } x\in \R^m\setminus\text{supp}(\chi)
\end{array}\right.  \]
Then $g$ is smooth and, as before, the map
\[ g_*:\F_\lambda (\R^m\times \R^n,\R^n)\to \F_\lambda (V,\R),\quad \eta\mapsto g_*(\eta)=g\circ (\text{id},\eta|_V)\]
is continuous. Moreover, for each $\eta \in \F_\lambda(U,\R^n)$ and $x\in V$ we have
\begin{align*}
    g_*(\eta)(x)&=g\circ(id,\eta|_V)(x) \\
    &= g(x,\eta|_V(x))  \\
    &= \chi(x)f(x,\eta|_V (x)) \\
    &= f(x,\eta|_V(x)) \\
    &= f_*(\eta)(x),
\end{align*}
whence $g_*=f_*$.
\end{proof}

\begin{re}
By Lemma \ref{5 pf}, the axiom (PF') is verified.
\end{re}
\noindent
Combining all these lemmas, we can conclude with the following Lemma.
\begin{lemma}\label{5 family}
Let $m\in \N$, $\mathcal{U}$ be the collection of open subsets of $\Rm$ and $0<\lambda\leq 1$. Then the family of Banach spaces $\{\F_\lambda(U,\R)\}_{U\in \mathcal{U}}$ define a family of locally convex spaces suitable for global analysis,
\end{lemma}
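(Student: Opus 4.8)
The plan is to verify, one requirement at a time, that the family $\{\F_\lambda(U,\R)\}_{U\in\mathcal{U}}$ satisfies Definition \ref{pre familysuitable}, assembling the ingredients already established in this section. First I would record the two structural prerequisites. Each $\F_\lambda(U,\R)$ is a Banach space, hence metrizable and complete, so in particular it is integral complete (the weak integral of a continuous path on a compact interval exists in any Banach space). Moreover the inclusion $J\colon\F_\lambda(U,\R)\to BC(U,\R)$ is continuous, since $\lVert\cdot\rVert_\infty\le\lVert\cdot\rVert_{\F_\lambda}$ by the definition of the $\F_\lambda$-norm. Since the reduction to the scalar case $E=F=\R$ is licensed by the remark following Definition \ref{pre familysuitable}, it suffices to treat real-valued functions throughout.

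Next I would check the four axioms, each reducing to a named result. For (PF) I invoke Lemma \ref{5 pf}, which yields both $f_*(\eta)\in\F_\lambda(V,\R)$ and continuity of $f_*$. For (PB) I use the observation preceding Lemma \ref{5 restric}: a relatively compact $W$ is covered by finitely many convex sets on which $\Theta$ is H\"older, so $\eta\circ\Theta|_W\in\F_\lambda(W,\R)$, and continuity then follows from the closed-graph argument of Lemma \ref{AX2}. For (GL) I note that extension by zero preserves both the supremum norm and the H\"older seminorm, so $\tilde\eta\in\F_\lambda(U,\R)$, with continuity of $e_{U,V,K}$ supplied by Lemma \ref{AX4}. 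Finally, for (MU) I combine Lemma \ref{5 h eta} (multiplication by a function in $C_c^\infty$ preserves membership in $\F_\lambda$) with the closed-graph continuity of Lemma \ref{AX3}.

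The genuine analytic content does not reside in this synthesis but in the lemmas it cites; the main obstacle — already handled in Lemmas \ref{5 foeta} and \ref{5 pf} — is the pushforward axiom, where one must control the H\"older seminorm of $f\circ(\mathrm{id},\eta)$ via the mean value theorem and local Lipschitz bounds for $f$ on a ball containing the (relatively compact) image of $\eta$. With those estimates in place, Lemma \ref{5 family} follows by direct assembly of the four verified axioms together with the Banach and continuous-inclusion properties.
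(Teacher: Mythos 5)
Your proposal is correct and follows essentially the same route as the paper: the paper gives no separate proof of Lemma \ref{5 family}, but obtains it exactly as you do, by assembling the Banach-space and continuous-inclusion facts with the remarks verifying (PB) (convex cover plus Lemma \ref{AX2}), (GL) (extension by zero preserves the H\"older seminorm), (MU) (Lemma \ref{5 h eta}), and (PF) (Lemma \ref{5 pf}), after reducing to scalar-valued functions via the remark following Definition \ref{pre familysuitable}. Your attribution of the closed-graph continuity to Lemma \ref{AX4} for (GL) and to Lemma \ref{AX3} for (MU) is in fact the correct pairing; the paper's remarks cite these two lemmas in swapped order, which appears to be a typographical slip rather than a different argument.
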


\begin{de}
Let $M$ and $N$ be finite-dimensional smooth manifolds without boundary and $0<\lambda\leq 1$. We denote the set $C^{0,\lambda}(M,N)$ of all functions $\gamma:M\to N$ such that for each $p\in M$, there exist the charts $\phi_M:U_M\to V_M$ of $M$ and $\phi_N:U_N\to V_N$ of $N$, such that $p\in U_M$, $\gamma(U_M)\subseteq U_N$ and $\phi_N\circ \gamma\circ \phi_M^{-1}\in \F_\lambda(V_M,\Rn)$.
\end{de}
\noindent
By Lemma \ref{5 family} we conclude.

\begin{prop}
Let $0<\lambda\leq 1$. For each compact manifold $M$ without boundary and smooth manifold N without boundary which admits local addition, the set $C^{0,\lambda}(M,N)$ admits a smooth manifold structure. 
\end{prop}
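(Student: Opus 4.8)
The plan is to recognize $BC^{0,\lambda}(M,N)$ as a special instance of the abstract mapping space $\F(M,N)$ and then invoke Theorem \ref{main1}. First I would take $\mathcal{U}$ to be the good collection of open and bounded subsets of $\Rm$ (this is a good collection by the remark following Definition \ref{pre goodcollection}) together with the family of Banach spaces $\{\F_\lambda(U,\R)\}_{U\in\mathcal{U}}$. By Lemma \ref{5 family}, this family is suitable for global analysis; in particular each $\F_\lambda(U,\R)$ is a Banach space, hence integral complete, so all the hypotheses of the general construction are met. Comparing the defining condition of $BC^{0,\lambda}(M,N)$ with the general definition of $\F(M,N)$, one sees that they agree once $\F$ is taken to be $\F_\lambda$: a map $\gamma\colon M\to N$ lies in $BC^{0,\lambda}(M,N)$ exactly when, around each point, there are charts $\phi_M$ of $M$ and $\phi_N$ of $N$ with $\gamma(U_M)\subseteq U_N$ and $\phi_N\circ\gamma\circ\phi_M^{-1}\in\F_\lambda(V_M,\Rn)$. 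Hence $BC^{0,\lambda}(M,N)=\F_\lambda(M,N)$ as sets.

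Next I would reconcile the boundaryless setting here with the manifold-with-corners setting of Theorem \ref{main1}. Since $M$ is a compact manifold without boundary, its charts take values in open subsets of $\Rm$ rather than of $[0,\infty)^m$. As recorded in the closing remark of Section 4, the entire construction, Theorem \ref{main1} included, carries over verbatim when $[0,\infty)^m$ is replaced by any locally convex closed subset of $\Rm$, in particular by all of $\Rm$. Working in this $\Rm$-modeled version, the manifold $M$ is admissible, and $N$ is precisely a smooth manifold without boundary admitting a local addition, as required by the hypotheses.

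Finally, applying Theorem \ref{main1} (in its $\Rm$-modeled form) to the good collection $\mathcal{U}$, the family $\{\F_\lambda(U,\R)\}_{U\in\mathcal{U}}$, the compact manifold $M$, and the manifold $N$ with local addition yields a smooth manifold structure on $\F_\lambda(M,N)=BC^{0,\lambda}(M,N)$, with charts $\Psi_\gamma^{-1}\colon\mathcal{U}_\gamma\to\mathcal{V}_\gamma$ built from the local addition, the sets $\mathcal{U}_\gamma$ being open. I expect no substantial obstacle: the genuine mathematical content lies entirely in Lemma \ref{5 family}, established above, and the present statement is a direct specialization. The only points requiring care are the verbatim matching of the two definitions and the appeal to the $\Rm$-version of the main theorem, both of which are bookkeeping rather than new difficulties.
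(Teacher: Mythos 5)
Your proposal is correct and follows exactly the paper's own route: the paper's proof consists precisely of observing that Lemma \ref{5 family} makes $\{\F_\lambda(U,\R)\}_{U\in\mathcal{U}}$ a family suitable for global analysis, so that Theorem \ref{main1} (in the $\Rm$-modeled form permitted by the closing remark of Section 4) applies to $BC^{0,\lambda}(M,N)=\F_\lambda(M,N)$. If anything, you are more explicit than the paper, which compresses the identification of the two sets and the appeal to the $\Rm$-version of the main theorem into the single line ``By Lemma \ref{5 family} we conclude.''
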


\begin{re}
Let $N_1$ and $N_2$ be finite-dimensional smooth manifolds without boundary which admit local additions. If $f:N_1\to N_2$ is a smooth map, then by Proposition \ref{Fpropfon}, the map
\[ C^{0,\lambda}(M,N_1)\to C^{0,\lambda}(M,N_2),\quad \gamma\mapsto f\circ \gamma \]
is smooth. 
\end{re}

\begin{prop}
Let $M$ be a compact smooth manifold without boundary and $N$ a smooth manifold without boundary which admits a local addition. If $0<\beta\leq \lambda\leq 1$, then ${\gamma \in C^{0,\beta}(M,N)}$ for each $\gamma\in C^{0,\lambda}(M,N)$. Moreover, the map
\[ \iota:C^{0,\lambda}(M,N)\to C^{0,\beta}(M,N),\quad \gamma\mapsto \gamma\]
is smooth.
\end{prop}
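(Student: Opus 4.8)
The plan is to reduce the smoothness of $\iota$ to the continuity and linearity of an inclusion between the modelling spaces, by showing that, read in the charts of Theorem~\ref{main1}, the transition map of $\iota$ is simply the natural inclusion of section spaces $\Gamma_{\F_\lambda}(\gamma)\hookrightarrow \Gamma_{\F_\beta}(\gamma)$.

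First I would record the purely local fact that for every open bounded $U\subseteq \Rm$ the inclusion $\F_\lambda(U,\R)\to \F_\beta(U,\R)$ is continuous and linear. Writing $d:=\operatorname{diam}(U)<\infty$ and using $\lambda-\beta\geq 0$, for $\eta\in \F_\lambda(U,\R)$ and $x\neq y$ one has $\lVert x-y\lVert^{\lambda-\beta}\leq d^{\lambda-\beta}$, so
\[
\lVert \eta\lVert_\beta\leq d^{\lambda-\beta}\lVert \eta\lVert_\lambda,\qquad \lVert \eta\lVert_{\F_\beta}\leq \max(1,d^{\lambda-\beta})\,\lVert \eta\lVert_{\F_\lambda}.
\]
In particular $\F_\lambda(U,\R)\subseteq \F_\beta(U,\R)$ with bounded inclusion, and applying this in charts yields at once the set inclusion $BC^{0,\lambda}(M,N)\subseteq BC^{0,\beta}(M,N)$, so $\iota$ is well defined.

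Next, fix $\gamma\in BC^{0,\lambda}(M,N)$ together with a finite family of charts $\varphi_i, W_i,\phi_i$ as in Proposition~\ref{topembedding}, used simultaneously for both regularities. The resulting topological embeddings $\Phi_\gamma^\lambda\colon \Gamma_{\F_\lambda}(\gamma)\to \prod_i\F_\lambda(W_i,\Rn)$ and $\Phi_\gamma^\beta\colon \Gamma_{\F_\beta}(\gamma)\to \prod_i\F_\beta(W_i,\Rn)$ have images carved out by the very same gluing conditions, since those involve only $\gamma$ and the transition maps of $N$ and not the H\"older exponent. Hence the product $\prod_i j_i$ of the local inclusions $j_i\colon \F_\lambda(W_i,\Rn)\to \F_\beta(W_i,\Rn)$ maps $\operatorname{Im}(\Phi_\gamma^\lambda)$ into $\operatorname{Im}(\Phi_\gamma^\beta)$, and the set-theoretic inclusion $\iota_\Gamma\colon \Gamma_{\F_\lambda}(\gamma)\to \Gamma_{\F_\beta}(\gamma)$ factors as
\[
\iota_\Gamma=(\Phi_\gamma^\beta)^{-1}\circ\Big(\prod_i j_i\Big)\circ \Phi_\gamma^\lambda .
\]
As $\Phi_\gamma^\lambda$ and each $j_i$ are continuous and $\Phi_\gamma^\beta$ is a topological embedding, $\iota_\Gamma$ is continuous and linear.

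Finally I would identify the chart transition. Using the same normalized local addition $\Sigma\colon\Omega\to N$ for both manifold structures and the charts $\Psi_\gamma^\lambda$, $\Psi_\gamma^\beta$ of Remark~\ref{Frecharts}, for $\sigma$ in the open domain one computes
\[
(\Psi_\gamma^\beta)^{-1}\circ \iota\circ \Psi_\gamma^\lambda(\sigma)=\theta_N^{-1}\circ(\gamma,\Sigma\circ\sigma)=\theta_N^{-1}\circ(\pi_{TN}\circ\sigma,\Sigma\circ\sigma)=\sigma,
\]
because $\theta_N=(\pi_{TN},\Sigma)$ and $\pi_{TN}\circ\sigma=\gamma$. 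Thus the transition map is the restriction of $\iota_\Gamma$ to an open subset, and being continuous and linear it is smooth; consequently $\iota$ is smooth. The main work is the third step: ensuring the two embeddings $\Phi_\gamma^\lambda,\Phi_\gamma^\beta$ are genuinely compatible (identical image conditions) so that the local inclusions assemble into the section inclusion. Once this is settled, the identification of the transition with $\iota_\Gamma$ is a one-line computation and smoothness is immediate.
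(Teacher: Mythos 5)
Your proposal is correct and follows essentially the same route as the paper: well-definedness comes from the continuity of the local inclusions $\F_\lambda(U,\Rn)\to\F_\beta(U,\Rn)$ read in charts, and smoothness follows because the chart transition $(\Psi_\gamma^\beta)^{-1}\circ\iota\circ\Psi_\gamma^\lambda$ is a restriction of the continuous linear section inclusion $\Gamma_{\F_\lambda}(\gamma)\to\Gamma_{\F_\beta}(\gamma)$, whose continuity is obtained via the embeddings of Proposition~\ref{topembedding}. Your write-up merely supplies details the paper leaves implicit, namely the explicit bound $\lVert\eta\lVert_\beta\leq d^{\lambda-\beta}\lVert\eta\lVert_\lambda$ and the factorization $(\Phi_\gamma^\beta)^{-1}\circ\bigl(\prod_i j_i\bigr)\circ\Phi_\gamma^\lambda$ with the observation that both images satisfy the same gluing conditions.
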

\begin{proof}
Let $\gamma\in C^{0,\lambda}(M,N)$, then for each $p\in M$, there exists the charts $\phi_M:U_M\to V_M$ of $M$ and $\phi_N:U_N\to V_N$ of $N$, such that $p\in U_M$, $\gamma(U_M)\subseteq U_N$ and $\phi_N\circ \gamma\circ \phi_M^{-1}\in \F_\lambda(V_M,\Rn)$. For each $U\in \mathcal{U}$, it is known that for $\beta\leq \lambda$  the linear operator
\[ I_{U}: \F_\lambda(U,\Rn)\to \F_\beta(U,\Rn),\quad \tau \mapsto \tau \] 
is continuous. In particular, we have
\[I_{V_M}(\phi_N\circ \gamma\circ \phi_M^{-1})=\phi_N\circ \gamma\circ \phi_M^{-1}\in \F_\beta(V_M,\Rn).\] 
Therefore $\gamma\in C^{0,\beta}(M,N)$.  Now, we consider the charts $(\mathcal{U}_\gamma, \Psi_\gamma^{-1})$ and $(\mathcal{U}_{\iota(\gamma)}, \Psi_{\iota(\gamma)}^{-1})$ in $\gamma \in C^{0,\lambda}(M,N)$ and $\iota(\gamma) \in C^{0,\beta}(M,N)$ respectively, then the map
\[ \Psi_{\iota(\gamma)}^{-1} \circ\iota \circ \Psi_\gamma:  \Psi_\gamma^{-1}\left( \mathcal{U}_\gamma \cap \iota^{-1}(\mathcal{U}_{\iota(\gamma)})\right)\to \Psi_{\iota(\gamma)}\left( \mathcal{U}_\gamma \cap \iota^{-1}(\mathcal{U}_{\iota(\gamma)})\right)\]
given by
\[ \Psi_{\iota(\gamma)}^{-1} \circ\iota \circ \Psi_\gamma (\sigma)
= (\pi_{N},\Sigma_N)^{-1} \circ \Big(\iota(\gamma) , \iota (\Sigma_N \circ \sigma) \Big) \]
is just a restriction of the map
\[ \tilde{\iota}:\Gamma_{\F_\lambda}(\eta)\to \Gamma_{\F_\beta}(\iota(\eta)),\quad \sigma\mapsto \sigma,\]
which is continuous by Proposition \ref{topembedding} and continuity of the maps $\{I_U\}_{U\in\mathcal{U}}$. 

\end{proof}

\noindent
{\bf Acknowledgements}: The author would like to thank Helge Gl\"ockner for his guidance during
the development of this work. The author was partially supported by the FONDECYT Grant \#1241719 and by ANID and DAAD (DAAD/BecasChile 2020, ID:91762237/62190017).
\bigskip


\begin{thebibliography}{99}
\bibitem{ASm}
Alzaareer, H. and A. Schmeding,
\emph{Differentiable mappings on products with different
degrees of differentiability in the two factors},
Expo.\ Math.\ \textbf{33},
(2015); 184--222.

\bibitem{Alz}
Alzaareer, H., \emph{Differential calculus on multiple products}, 
Indag. Math. 30 (2019), 1036-1060

\bibitem{AGS}
Amiri, H., H. Gl\"{o}ckner, and A. Schmeding,
\emph{Lie groupoids of mappings taking values in a Lie groupoid},
Arch.\ Math.\ (Brno) \textbf{56} (2020),
307-356. 

\bibitem{Ber}
Bertram W., H. Gl\"{o}ckner and K.-H. Neeb,
\emph{Differential calculus over general base fields and rings}, Expo. Math. \textbf{22} (2004), 213-282.

\bibitem{Bni}
Bastiani, A., \emph{Applications diff\'erentiables et vari\'et\'es
diff\'erentiables 
de dimension infinie}, J. Anal.\ Math.\ \textbf{13} (1964),
1--114.

\bibitem{Bochnak1971}
Bochnak J. and J. Siciak, Analytic functions in topological vector spaces, \emph{Stud. Math.}, Stud. Math. 39 (1971), 77-112.

\bibitem{Els}
Eells, J. Jr.,
\emph{A setting for global analysis},
Bull.\ Amer.\ Math.\ Soc.\ \textbf{72} (1966),
751--807.

\bibitem{FHOL}
Fiorenza R.,
``H\"older and locally H\"older Continuous Functions, and Open Sets of Class $C^k, C^{k,\lambda}$'',
Birkh\"auser, 2016.

\bibitem{FKl}
Flaschel, P. and W. Klingenberg,
``Riemannsche
Hilbertmannigfaltigkeiten,''
Springer,
Berlin, 1972.

\bibitem{FPM}
Florencio, M., F. Mayoral, and P. J. Pa\'{u}l,
\emph{Spaces of vector-valued integrable functions and
localization of bounded subsets},
Math.\ Nachr.\ \textbf{174} (1995), 89--111.

\bibitem{Floret1971}
 Floret, K., \emph{Lokalkonvexe Sequenzen mit kompakten Abbildungen}, J. Reine Angew. Math. 247 (1971), 155-195.

\bibitem{GL1}
Gl\"{o}ckner, H.,
\emph{Infinite-dimensional
Lie groups without completeness restrictions},
pp.\ 43--59 in:
Strasburger, A. et al.\ (eds.),
``Geometry and Analysis on Finite- and Infinite-Dimensional
Lie Groups,''
Banach Center Publ.\ \textbf{55},
Warsaw, 2002.

\bibitem{GL3}
Gl\"{o}ckner, H.,
\emph{Lie groups of real analytic diffeomorphisms are $L^1$-regular},
Nonlinear Anal. Volume 252 (2025), 113690.

\bibitem{GL5}
Gl\"{o}ckner, H.,
\emph{Smoothing operators for vector-valued functions
and extension operators},
preprint,
arXiv:2006.00254

\bibitem{FGL1}
 Gl\"ockner H. and L. T\'arrega, \emph{Mapping groups associated with real-valued function spaces and direct limits of Sobolev-Lie groups}, J. Lie Theory 33 (2023), 271-296.

\bibitem{Kelley1975}
 Kelley J. L., \emph{General Topology}, Springer, New York, 1975.

\bibitem{Krg}
Klingenberg, W. P. A.,
``Riemannian Geometry,''
de Gruyter,
Berlin,
1995. 

\bibitem{KMr}
Kriegl, A. and P. W. Michor,
``The Convenient Setting of Global
Analysis,''
Amer.\ Math.\ Soc.,
Providence, 1997.

\bibitem{Kri}
Krikorian, N.,
\emph{Differentiable structures on function spaces},
Trans.\ Amer.\ Math.\ Soc.\ \textbf{171}
(1972), 67--82. 

\bibitem{Lang}
Lang, S., \emph{``Fundamentals of Differential Geometry''}, Springer, 1999.

\bibitem{JLee}
Lee, J., \emph{``Introduction to Smooth Manifolds''}, Springer, 2000.

\bibitem{Mor}
Michor, P. W.,
\emph{``Manifolds of Differentiable
Mappings''},
Shiva, Orpington,
1980.

\bibitem{MCM}
Michor, P. W.,
\emph{Manifolds of mappings for continuum mechanics},
pp.\ 3--75 in:
R. Segev et al.\ (eds.), ``Geometric Continuum Mechanics,''
Birkh\"{a}user, Cham, 2020.

\bibitem{Pls}
Palais, R. S.,
``Foundations of Global Non-Linear
Analysis,''
W. A. Benjamin, New York,
1968.

\bibitem{Pin}
Pinaud, M. ``Manifold of mappings and regularity properties of half-Lie groups,'' doctoral dissertation, Paderborn University, 2025 (see nbn-resolving.org/urn:nbn:de:hbz:466:2-54221)

\bibitem{Rudin1991}
W. Rudin, \emph{Functional Analysis}, McGraw Hill, 1991.
\end{thebibliography}
\end{document}